\theoremstyle{plain}
\newtheorem{theorem}{Theorem}[section]
\newtheorem{lemma}{Lemma}[section]
\newtheorem{proposition}{Proposition}[section]
\newtheorem{corollary}{Corollary}[section]
\theoremstyle{definition}
\theoremstyle{remark}
\newtheorem{remark}{Remark}[section]
\DeclarePairedDelimiter{\abs}{\lvert}{\rvert} 
\DeclarePairedDelimiter{\norm}{\lVert}{\rVert}  
\DeclareMathOperator{\divergenza}{div}
\renewcommand{\div}{\divergenza}
\DeclareMathOperator{\sgn}{sgn}
\DeclareMathOperator{\dist}{dist}
\newcommand{\R}{\mathbb{R}}
\newcommand{\C}{\mathbb{C}}
\newcommand{\N}{\mathbb{N}}
\newcommand{\normeq}[1]{{\left\vert\kern-0.25ex\left\vert\kern-0.25ex\left\vert #1 
    \right\vert\kern-0.25ex\right\vert\kern-0.25ex\right\vert}}
\newenvironment{system}%
{\left\lbrace\begin{array}{r@{\hspace{1mm}}ll}}%
{\end{array}\right.}
\title{\textbf{Eigenvalue bounds and spectral stability of Lamé operators with complex potentials}}
\author[1]{Biagio Cassano}
\author[2]{Lucrezia Cossetti} 
\author[3]{Luca Fanelli}		
\affil[1]{Dipartimento di Matematica, Università degli Studi di Bari Aldo Moro, Via Edoardo Orabona 4, 70125 Bari, Italy; biagio.cassano@uniba.it}
\affil[2]{Fakult\"{a}t f\"{u}r Mathematik, Institut f\"{u}r Analysis, Karlsruher Institut f\"{u}r Technologie (KIT), Englerstra{\ss}e 2, 76131 Karlsruhe, Germany; lucrezia.cossetti@kit.edu}
\affil[3]{Ikerbasque \& Departamento de Matem\'aticas, Universidad del Pa\'is Vasco/Euskal Herriko Unibertsitatea (UPV/EHU), Barrio Sarriena s/n, 48940, Leioa, Spain; luca.fanelli@ehu.es}
\begin{document}

\date{\small \today}

\maketitle



\begin{abstract}
  \noindent
	This paper is devoted to providing quantitative bounds on the
        location of eigenvalues, both \emph{discrete} and
        \emph{embedded}, of non self-adjoint Lamé operators of
        elasticity $-\Delta^\ast + V$ in terms of suitable norms of
        the potential $V$. In particular, this allows to get sufficient conditions on the size of the potential such that the point spectrum of the perturbed operator remains empty. In three dimensions we show full spectral stability under suitable form-subordinated perturbations: we prove that the spectrum is purely continuous and coincides with the non negative semi-axis as in the free case.
\end{abstract}

\section{Introduction}

This paper is devoted to the analysis of the spectrum of the perturbed
Lamé operator of elasticity $-\Delta^\ast + V.$
The Lamé operator $-\Delta^\ast$ acts on smooth vector fields as
\begin{equation*}
	-\Delta^\ast u:= 
	-\mu \Delta u									 
	- (\lambda + \mu) \nabla \div u,
	\qquad u\in C^\infty_0(\R^d)^d := C^\infty_0(\R^d;\C^d),
\end{equation*}
where the material-dependent Lamé parameters $\lambda, \mu\in \R$ satisfy the standard ellipticity conditions (\emph{cfr.}~\cite[Sec.~2.2]{Co17})
\begin{equation*}
	\mu>0,\,
	\lambda+ 2\mu>0.
\end{equation*}	
The Lam\'e operator is self-adjoint on $H^1(\R^d)^d$ and 
$\sigma(-\Delta^\ast) = \sigma_{\textup{ac}}(-\Delta^\ast) = [0,
+\infty)$; we refer the reader to~\cite{La_Li, Kupradze} for a detailed
exposition of the general theory of elasticity and to
\cite{B_F-G_P-E_R_V, B_F_R_V_V,B_F-G_P-E_R_V2,B_F-G_P-E_R_V0} and references therein for previous
results in the topic.
We consider the perturbation $V\colon \R^d\to \C^{d\times d}$ to be a multiplication operator by a (possibly)
non-hermitian matrix: this frames our study into a non-self-adjoint
setting.
Spectral analysis of non-self-adjoint models has seen a huge
development in the last decades and nowadays the literature in this
direction is very extensive,
see~\cite{A_A_D,B_T_G20,Ca_Ib_Kr_St19,Cu14,Cuenin,Cu_La_Tr14,
Cu_Si18,DA_Fa_Sc20,Enblom16,En18,
FK19,FKV18,F_K_V2,Frank,FrankIII,F_L_L_S,Fr_Sa17,
FS17,Ha_Kr20,IKL19,Ib_St19,Kr_Ku20,La_Sa,Sa,Sambou16,Davies,
D_H_K,D_H_K13,F_L_S16} which is just a selection of the existing
material in the subject.

The study of the \emph{discrete} spectrum of the non-self-adjoint
Lamé operator $-\Delta^\ast + V$ was started in~\cite{Co19}:
in this paper we extend these results 
to cover \emph{embedded}
eigenvalues. Moreover, we investigate the spectral stability of the
Lam\'e operator of elasticity and get in any dimension $d\geq 1$ sufficient conditions on the
size of the potential  that guarantee that the point spectrum of the perturbed operator remains empty. In the special case $d=3$ we show that the whole spectrum is preserved under suitable form-subordinated perturbations.

Adapting to the Lamé operator new techniques introduced by Frank in~\cite{Frank} for the Laplacian,  in~\cite{Co19} it is shown that every \emph{discrete} eigenvalue $z\in
\C \setminus [0,\infty)$ of $-\Delta^\ast + V$ lies in the closed disk
of the complex plane centered at the origin and with radius whose size depends on the Lebesgue,
Morrey-Campanato or Kerman-Sawyer norm, according to the chosen class of potentials considered. More specifically, when the size of the potential is measured with respect to the $L^p$ topology, \cite[Theorem 1.2]{Co19} shows that any eigenvalue $z\in \C \setminus [0,\infty)$ of $-\Delta^\ast + V$ satisfies
\begin{equation}
  \label{eq:Lp-bound}
  |z|^\gamma \leq
  C \| V \|_{L^{\gamma + \frac{d}{2}}(\R^d)}^{\gamma + \frac{d}{2}},
\end{equation} 
 for some $C>0$, with $d\geq 2$ and $0\leq\gamma\leq 1/2$ ($\gamma\neq 0$ if $d=2$).

In order to cover potentials with stronger local singularities one
considers the Morrey-Campanato class $\mathcal{L}^{\alpha,p}(\R^d)$,
that is the class of functions $W$ such that for $\alpha>0$ and $1\leq p\leq d/\alpha$ the following norm 
\begin{equation*}
	\norm{W}_{\mathcal{L}^{\alpha,p}(\R^d)}:=\sup_{x,r} r^\alpha \Big( r^{-d} \int_{B_r(x)} \abs{W(x)}^p\,dx \Big)^{\frac{1}{p}} 
\end{equation*}
is finite.
For example, $1/|x|^\alpha\notin L^{d/\alpha}(\R^d)=\mathcal{L}^{\alpha, d/\alpha}(\R^d)$
but $1/|x|^\alpha  \in 
\mathcal{L}^{\alpha, p}(\R^d)$ for $\alpha>0$ and $1\leq
p<d/\alpha$. In particular,
the inverse-square potential of quantum mechanics $V(x)=1/|x|^2,$ $x\in \R^3,$ at first ruled out by the $L^p$ type condition, can be recovered once the size of the potential is measured in terms of Morrey-Campanato norms. 
In~\cite[Theorem 1.3]{Co19}, the analogous bound
to~\eqref{eq:Lp-bound} for potentials in the Morrey-Campanato class is
provided: any eigenvalue $z\in \C\setminus [0,\infty)$ of $-\Delta^\ast + V$ satisfies
\begin{equation}\label{eq:MC-bound}
	|z|^\gamma \leq C\|V\|_{\mathcal{L}^{\alpha, p}(\R^d)}^{\gamma + \frac{d}{2}},
\end{equation}
for some $C>0$, $d\geq 2,$ $0\leq\gamma\leq 1/2$ ($\gamma\neq 0$ if $d=2$) and
with $(d-1)(2\gamma + d)/[2(d-2\gamma)]<p\leq \gamma + d/2$ and
$\alpha=2d/(2\gamma +d).$

We remark that for $\alpha >0$ and $1<p\leq d/\alpha$ the condition
$W \in \mathcal{L}^{\alpha,p}(\R^d)$
ensures the $L^2$ weighted boundedness of fractional integrals
(see Fefferman~\cite{Fefferman1983} for the special case $\alpha=2$
and~\cite{Perez95} for the more general result, see
also~\cite[Section~2.2]{B_B_R_V}),
that is the existence of a non-negative constant $C(W)>0$ such that
\begin{equation}\label{eq:est_fract_int}
\|I_{\alpha/2} f\|_{L^2(\R^d , W dx)}\leq C(W)
\|f\|_{L^2(\R^d)}, \quad \text{ for all } f\in C_c^\infty(\R^d),
\end{equation} 
where $\widehat{I_\alpha
  f}(\xi)=|\xi|^{-\alpha}\widehat{f}.$
If $W\in \mathcal{L}^{\alpha, p}(\R^d)$ the constant $C(W)$ in~\eqref{eq:est_fract_int} can be written more explicitly in terms of the Morrey-Campanato norm $\|W\|_{\mathcal{L}^{\alpha,p,d}(\R^d)}$ of $W,$ more specifically
  \begin{equation}\label{eq:MC-fract-int}
  	C(W)=C_{\alpha,p,d} \|W\|_{\mathcal{L}^{\alpha,p,d}(\R^d)}^{1/2},
  \end{equation}
for $C_{\alpha,p,d}>0$ independent on $W.$
The largest class of functions $W$ such that this inequality is available is
the Kerman-Saywer space $\mathcal{KS}_\alpha(\R^d)$
(see~\cite[Theorem 2.3]{Ke_Sa1986}), namely the set of all the functions $W$ such that for $0<\alpha<d$ the following norm
\begin{equation*}
	\norm{W}_{\mathcal{KS}_\alpha(\R^d)}:= \sup_{Q} \Big( \int_{Q} \abs{W(x)}\, dx \Big)^{-1} \int_Q \int_Q \frac{\abs{W(x)} \abs{W(y)}}{\abs{x-y}^{d-\alpha}}\, dx\, dy
\end{equation*}   
is finite (the supremum is taken over all dyadic cubes $Q$ in $\R^d$).
As a matter of fact the finiteness of this norm is a necessary and sufficient condition
for the validity of~\eqref{eq:est_fract_int} and the best constant in
it is
\begin{equation}\label{eq:const.k-s}
C(W)=C_{\alpha,d} \|W
\|_{\mathcal{KS}_\alpha(\R^d)}^{1/2},
\end{equation}
for some
 constant $C_{\alpha,d}>0$ independent on $W$.
In particular this implies $\|W\|_{\mathcal{KS_\alpha}(\R^d)}\leq
C\|W\|_{\mathcal{L}^{\alpha,p}(\R^d)}$,
 for $\alpha>0$, $1<p\leq d/\alpha$ and $C>0$, which gives
 $\mathcal{L}^{\alpha, p}(\R^d)\subseteq \mathcal{KS}_\alpha(\R^d)$. 
 In the case $\alpha=2,$~\eqref{eq:est_fract_int} is equivalent  to the validity of an Hardy-type inequality for the weight $W,$ namely
\begin{equation}\label{eq:Hardy}
	\int_{\R^d} |W||f|^2\, dx\leq a_W \int_{\R^d} |\nabla f|^2\, dx, \quad \text{ for all } f\in C_c^\infty(\R^d),
\end{equation} 
where $a_W:=C(W)^2$ and $C(W)$ is the constant in~\eqref{eq:est_fract_int}.
In the case $d=3$ we have that 
\begin{equation}\label{eq:d=3-a}
	a_W=
	\begin{system}
		c_\textup{F} \|W\|_{\mathcal{L}^{2,p}(\R^3)}, \quad &\text{if}\; W\in \mathcal{L}^{2,p}(\R^3),\\
		c_{\textup{KS}}  \|W\|_{\mathcal{KS}_2(\R^3)}, \quad &\text{if}\; W\in \mathcal{KS}_2(\R^3),
	\end{system}
\end{equation} 
where we have set $c_\textup{F}=c_\textup{F}(p):=C_{2,p,3}^2,$ with $C_{2,p,3}$ as in~\eqref{eq:MC-fract-int} and $c_\textup{KS}:=C_{2,3}^2,$ with $C_{2,3}$ as in~\eqref{eq:const.k-s}.

In \cite[Theorem 1.4]{Co19} it is shown that any eigenvalue $z\in \C \setminus[0,\infty)$ of $-\Delta^\ast + V$ satisfies 
\begin{equation}\label{eq:KS-bound}
	|z|^\gamma \leq  C \, Q_2(|V|)^{2\gamma +d} \| | V |^\beta\|_{\mathcal{KS}_\alpha(\R^d)}^{\frac{1}{\beta}(\gamma + \frac{d}{2})},
\end{equation}
for $C>0$, $d\geq 2,$ $1/3\leq\gamma<1/2$ if $d=2$ and $0\leq \gamma <1/2$ if
$d\geq 3$ and where $\alpha=2d\beta/(2\gamma + d)$ and
$\beta=(d-1)(2\gamma + d)/[2(d-2\gamma)],$ under the additional
assumption that $|V|$ 
belongs to the $A_2(\R^d)$ Muckenhoupt class of weights, \emph{i.e.}, the set of measurable non-negative functions $w$ such that the following quantity
\begin{equation*}
	Q_2(w)
	:=\sup_{Q} \Bigg ( \frac{1}{\abs{Q}} \int_Q w(x)\, dx \Bigg) 
	\Bigg ( \frac{1}{\abs{Q}} \int_Q \frac{1}{w(x)}\, dx\Bigg)
\end{equation*}
is finite. Here the supremum is taken over any cube $Q$ in $\R^d.$

We stress that, in the higher dimensional case $d\geq 3$, the validity
of bounds~\eqref{eq:Lp-bound},~\eqref{eq:MC-bound}
and~\eqref{eq:KS-bound} provides conditions which guarantee the absence of
non-embedded discrete eigenvalues depending on the size of the potential, measured with respect to the corresponding norm. Indeed, once $\gamma=0$ is fixed, 
 for any eigenvalue $z\in \C\setminus [0,\infty)$ of $-\Delta^\ast +V$ one has
\begin{equation}\label{eq:gamma=0}
	1\leq C \|V\|^\frac{d}{2},
\end{equation}	
where $\|V\|$ denotes $\|V\|_{L^{\gamma + \frac{d}{2}}(\R^d)},$ $\|V\|_{\mathcal{L}^{\alpha, p}(\R^d)}$ or $Q_2(|V|)^d\, \||V|^\frac{d-1}{2}\|_{\mathcal{KS}_{d-1}(\R^d)}^\frac{2}{d-1},$ respectively.
 If $ C \|V\|^\frac{d}{2}<1,$ \eqref{eq:gamma=0} yields a contradiction and so $\sigma_\textup{d}(-\Delta^\ast+V)=\varnothing$ (\emph{cfr}.~\cite{Co19}, Thm.~1.2, Cor.~1.1 and Cor.~1.2). 

 Seeking for eigenvalue bounds
like~\eqref{eq:Lp-bound}, \eqref{eq:MC-bound} and \eqref{eq:KS-bound} for perturbed
Lamé operators $-\Delta^\ast + V$ with $V$ possibly
non-hermitian was mainly motivated by the existence in the
literature of the corresponding bounds for non-self-adjoint
Schr\"odinger operators $-\Delta + V$ and by the link between the
two operators given by the Helmoltz
decomposition, see \Cref{lem:helmholtz}.
More motivation come from the one-dimensional framework, where the Lamé operator becomes a constant multiple of the Laplacian, \emph{i.e.}~$\Delta^\ast=(\lambda+2\mu)d^2/dx^2$.
As far as real-valued potentials are considered, it comes
merely as a consequence of Sobolev inequalities that the
distance from the origin of every eigenvalue $z$ of the
Schr\"odinger operator lying in the negative semi-axis
can be bounded in terms of $L^p$ norm of the
potential, see~\cite{Keller,L_T,C_F_L}.
The non-self-adjoint situation requires
different tools. A key strategy in the subject was provided
by Abramov, Aslanyan and Davies: in~\cite{A_A_D} they
prove that for a possibly complex-valued $V,$ every discrete eigenvalue $z\in [0,\infty)$ of the
one-dimensional Schr\"odinger operator $-d^2/dx^2 + V$  lies in the complex plane within
a $1/4\|V\|_{L^1(\R)}^2$ distance from the origin. The generalization
to the higher dimensional case $d\geq 2$ was developed in a series of
work by different authors~\cite{F_L_L_S,La_Sa,Sa,Frank,Fr_Sa17}, just
to cite some among several relevant contributions. Eventually, Frank
and Simon~\cite{FS17}, using suitable resolvent estimates by Kenig,
Ruiz and Sogge~\cite{K_R_S}, proved the validity
of bounds of type~\eqref{eq:Lp-bound} for any $d\geq 2$ and any eigenvalue $z\in \C$
of the Schr\"odinger operator $-\Delta + V,$ with short-range potentials
$V\in L^{\gamma+ d/2}(\R^d),$ $\gamma\leq 1/2.$ In the same
work~\cite{FS17} the authors investigated also the case of long-range
potentials and showed that a bound of the form~\eqref{eq:Lp-bound}
could not hold for such a class: they construct
a sequence of real-valued potentials $V_n$ with
$\|V_n\|_{L^{\gamma+d/2}(\R^d)}\to 0,$ $\gamma>1/2,$ such that
$-\Delta + V_n$ has eigenvalue 1. A better understanding of the
distribution of eigenvalues of Schr\"odinger operators with slowly
decaying potentials $V\in L^{\gamma+d/2}(\R^d),$ $\gamma>1/2,$ was led
later by Enblom~\cite{Enblom16} and
Frank~\cite{FrankIII}. In~\cite{FrankIII} it is proved that a bound of
type \eqref{eq:Lp-bound} holds true with a correction which depends on the distance of the eigenvalue $z$ from the positive half-line, that is, defining $\delta(z):=\dist(z, [0,\infty)),$ one has
\begin{equation}\label{eq:alternative-bound}
  \delta(z)^{\gamma-1/2}|z|^{1/2}\leq C_{\gamma,\delta}\|V\|_{L^{\gamma+d/2}(\R^d)}^{\gamma+d/2}.
\end{equation}
Notice that~\eqref{eq:alternative-bound} is weaker than~\eqref{eq:Lp-bound} since $\delta(z)\leq |z|.$
As far as the size of the potential is measured in terms of
$L^p$ norms, one requires $p\geq d/2$ if $d\geq 3$ and $p> 1$
if $d=2$ in order to define $-\Delta + V$ as an $m$-sectorial
operator: this rules out the possibility
to treat physically interesting classes of potentials  which
might display stronger local singularities 
and demands for enlarging the class of potentials
considered. The analogous of bound~\eqref{eq:MC-bound} for 
Schr\"odinger operators with potentials in the Morrey-Campanato
class can be found in~\cite{Frank}, whereas
the analogous of bound~\eqref{eq:KS-bound} for potentials in the Kerman-Saywer
class is proved by Lee and Seo in~\cite{Lee_Seo}, see
also~\cite{Seo}. We observe that the bound
obtained in~\cite{Lee_Seo} presents a constant which is
independent of $V$, differently
from~\eqref{eq:KS-bound} in our setting: this shows a pathological behavior of the Lamé
operator as compared to the Schr\"odinger operator, consequence of the 
non-uniform weighted boundedness properties of the Riesz
transform with respect to  the weight $|V|$, \emph{cfr.} \Cref{boundedness-Riesz-transform}.  

The proofs of bounds~\eqref{eq:Lp-bound}, \eqref{eq:MC-bound} and \eqref{eq:KS-bound}
(\emph{cfr.}~\cite[Theorems~1.2--1.4]{Co19}) all display the same
underlying structure strongly based on the Birman-Schwinger principle
(\emph{cfr}.~\cite{Simon71}, Thm.~III.12, Thm.~III.14). The usefulness
of the Birman-Schwinger principle to localize eigenvalues of self-adjoint
and non-self-adjoint Hamiltonians is by no means questionable, as a
matter of fact an extensive bibliography on the subject has been
produced adopting this methodology. Without any hope of completeness
we refer to~\cite{Frank,FS17,FKV18} for results on Schr\"odinger
operators and~\cite{Ib_St19} for an adaptation to the discrete
setting, see also~\cite{Kr_Ku20} where matrix-valued damped wave
operators are concerned. Lower order operators, such as Dirac or fractional
Schr\"odinger models, are investigated
in~\cite{Cu_La_Tr14,Cuenin,FK19,Ca_Pi_Ve20,DA_Fa_Sc20} (see
also~\cite{Fe_La_Sa19,Cu19}) and in~\cite{Ca_Ib_Kr_St19} respectively
in the continuous and discrete scenario; as for higher order operators
refer to~\cite{IKL19}. Associated spectral stability results obtained with different techniques and related tools can be found in~\cite{Bu_D'A_Fa, Cacciafesta, Fanelli,Ca_D'A,CFK,A-L_K_H,F_L20}.   

In our context, the Birman-Schwinger principle
states that $z\in \C \setminus [0,\infty)$ is an eigenvalue of
$-\Delta^\ast + V$ \emph{if and only if} $-1$ is an eigenvalue of the
Birman-Schwinger operator $K_z:=|V|^{1/2}(-\Delta^\ast
-z)^{-1}V_{1/2}$ on $L^2(\R^d)^d,$ where $V_{1/2}:=|V|^{1/2} \sgn(V)$
and $\sgn(V)$ denotes the complex sign function. In particular, if
$-1$ is an eigenvalue of $K_z$ the norm of $K_z$ is at least one and
then proving bounds~\eqref{eq:Lp-bound}, \eqref{eq:MC-bound} and \eqref{eq:KS-bound} descends
from proving that 
\begin{equation*}
	\||V|^{1/2}(-\Delta^\ast -z)^{-1}V_{1/2}\|_{L^2\to L^2}^{\gamma + \frac{d}{2}}\leq c |z|^{-\gamma}\|V\|^{\gamma + \frac{d}{2}},
\end{equation*}    
where $\|V\|=\|V\|_{L^{\gamma + \frac{d}{2}}(\R^d)}$,
$\|V\|=\|V\|_{\mathcal{L}^{\alpha,p}(\R^d)}$ or
$\|V\|= Q_2(|V|) \| | V |^\beta\|_{\mathcal{KS}_\alpha(\R^d)}^{\frac{1}{\beta}}$
for~\eqref{eq:Lp-bound}, \eqref{eq:MC-bound} or~\eqref{eq:KS-bound}
respectively. Treating eigenvalues $z\in \C\setminus
[0,\infty)$, the Birman-Schwinger operator $K_z$ is well defined since $\sigma(-\Delta^\ast)=[0,\infty).$
The natural strategy to cover also $z\in [0,\infty)$ is to study an
approximating Birman-Schwinger operator, that is,
$K_{z+i\varepsilon}:=|V|^{1/2}(-\Delta^\ast
-z-i\varepsilon)^{-1}V_{1/2},$ for some $\varepsilon>0$, retracing the
proofs of~\eqref{eq:Lp-bound}, \eqref{eq:MC-bound} and~\eqref{eq:KS-bound} valid for $z+i\varepsilon$
 outside the spectrum and eventually passing to the limit
$\varepsilon\to 0.$ Thanks to this approach, in the following we extend 
\cite[Theorems 1.2--1.4]{Co19} to the whole point spectrum of
$-\Delta^\ast + V$.

The following theorem extends \cite[Theorems 1.2]{Co19} to treat the
whole point spectrum.
\begin{theorem}\label{thm:Lp-result}
	Let $d\geq 2,$ $0<\gamma\leq 1/2$ if $d=2$ and $0\leq \gamma\leq 1/2$ if $d\geq 3$ and $V\in L^{\gamma + \frac{d}{2}}(\R^d;\C^{d\times d}).$ Then there exists a universal constant $c_{\gamma, d, \lambda, \mu}>0$ independent on $V$ such that
	\begin{equation}\label{eq:thesis}
		\sigma_\textup{p}(-\Delta^\ast + V)
		\subset \left\{z\in \C\colon |z|^\gamma \leq c_{\gamma, d, \lambda, \mu} \| V \|_{L^{\gamma + \frac{d}{2}}(\R^d)}^{\gamma + \frac{d}{2}}\right \}. 
	\end{equation}
\end{theorem}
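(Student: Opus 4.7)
The case $z \in \C \setminus [0,\infty)$ is already contained in \cite[Theorem 1.2]{Co19}, so the novelty is the extension to embedded eigenvalues $z_0 \in [0,\infty)$. Following the strategy outlined in the introduction, my plan is to apply the Birman--Schwinger principle at a shifted spectral parameter $z_0 + i\varepsilon$, for which the resolvent is well defined, and then let $\varepsilon \to 0^+$.

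\textbf{Step 1 (uniform resolvent bound).} I would first revisit the proof of \cite[Theorem 1.2]{Co19} and verify that the argument, based on Kenig--Ruiz--Sogge-type estimates and the Helmholtz decomposition, actually yields the quantitative bound
\[
\|\,|V|^{1/2}(-\Delta^\ast - \zeta)^{-1}V_{1/2}\,\|_{L^2 \to L^2}^{\gamma + d/2}
\leq c_{\gamma,d,\lambda,\mu}\,|\zeta|^{-\gamma}\,\|V\|_{L^{\gamma+d/2}(\R^d)}^{\gamma+d/2}
\]
for every $\zeta \in \C \setminus [0,\infty)$, with the constant $c_{\gamma,d,\lambda,\mu}$ depending only on the listed parameters. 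This step is essentially an inspection of \cite{Co19}, since the dispersive estimates used there are uniform in $\zeta$ up to the explicit factor $|\zeta|^{-\gamma}$.

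\textbf{Step 2 (approximate Birman--Schwinger identity).} Fix $z_0 \in [0,\infty) \cap \sigma_\textup{p}(-\Delta^\ast + V)$ and a corresponding eigenfunction $\psi$, and set $\phi := |V|^{1/2}\psi$. Using the eigenvalue equation $(-\Delta^\ast - z_0)\psi = -V\psi$ together with $V = |V|^{1/2}V_{1/2}$ and the standard resolvent identity $(-\Delta^\ast - z_0 - i\varepsilon)^{-1}(-\Delta^\ast - z_0) = I + i\varepsilon(-\Delta^\ast - z_0 - i\varepsilon)^{-1}$, a direct algebraic manipulation gives, for every $\varepsilon > 0$,
\[
(I + K_{z_0 + i\varepsilon})\phi = -i\varepsilon\,|V|^{1/2}(-\Delta^\ast - z_0 - i\varepsilon)^{-1}\psi.
\]

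\textbf{Step 3 (vanishing of the error term).} I expect this to be the main obstacle. The goal is to show that
\[
\varepsilon\,\bigl\|\,|V|^{1/2}(-\Delta^\ast - z_0 - i\varepsilon)^{-1}\psi\,\bigr\|_{L^2} \longrightarrow 0 \quad \text{as } \varepsilon \to 0^+.
\]
Since $-\Delta^\ast$ is self-adjoint with purely absolutely continuous spectrum, the spectral theorem applied to the bounded symbol $\varepsilon/(\lambda - z_0 - i\varepsilon)$ and dominated convergence yield $\varepsilon(-\Delta^\ast - z_0 - i\varepsilon)^{-1}\psi \to 0$ strongly in $L^2$. Promoting this to convergence after multiplication by $|V|^{1/2}$ would be carried out by combining a density argument with the uniform boundedness of $|V|^{1/2}(-\Delta^\ast - \zeta)^{-1/2}$-type operators implicit in Step 1, exploiting that $V\psi \in L^2$ encodes additional regularity of $\psi$ via the eigenvalue equation.

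\textbf{Step 4 (conclusion).} Since $\sigma_\textup{p}(-\Delta^\ast) = \varnothing$, we must have $V\psi \neq 0$ and hence $\phi \neq 0$. Taking $L^2$-norms in the identity of Step 2, using Step 3, and dividing by $\|\phi\|_{L^2}$ then yields
\[
1 \leq \liminf_{\varepsilon \to 0^+}\|K_{z_0 + i\varepsilon}\|_{L^2 \to L^2}.
\]
Inserting the uniform estimate from Step 1 and using continuity of $|\cdot|^{-\gamma}$ at $z_0 \neq 0$ (the case $z_0 = 0$ being trivial when $\gamma > 0$) gives the desired bound \eqref{eq:thesis}.
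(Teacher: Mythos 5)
Your proposal follows essentially the same route as the paper's second proof (\Cref{sec:proofs-sub}): your Step 1 is \Cref{lemma_BS-estimates}, and the identity in Step 2 is exactly the paper's relation $\phi_\varepsilon=-K_{z+i\varepsilon}\phi$ with $\phi_\varepsilon-\phi$ equal to your error term. The obstacle you flag in Step 3 is resolved in the paper precisely along the lines you anticipate, but note that a density argument combined with strong $L^2$ convergence alone cannot work since $|V|^{1/2}$ is unbounded on $L^2$: one instead upgrades $\varepsilon(-\Delta^\ast-z_0-i\varepsilon)^{-1}\psi\to 0$ to convergence in $H^1(\R^d)^d$ (dominated convergence on the Fourier side, using $\psi\in H^1$) and then applies the $H^1\to L^2$ boundedness of multiplication by $|V|^{1/2}$ (\Cref{lemma:Frank}), which is exactly the ``uniform boundedness of $|V|^{1/2}(-\Delta^\ast-\zeta)^{-1/2}$'' you invoke, obtained from the Step-1 bound at negative real spectral parameter.
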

As a corollary, the previous theorem provides a sufficient condition
on the size of the potential to guarantee \emph{total} absence of
eigenvalues in the higher dimensional case $d\geq 3.$ 
\begin{corollary}\label{cor:absenceLp}
	If $d\geq 3$ and 
	\begin{equation*}
		c_{0, d, \lambda, \mu}\|V\|_{L^{\frac{d}{2}}(\R^d)}^{\frac{d}{2}}<1,
	\end{equation*}
	then $-\Delta^\ast + V$ has no eigenvalues.
  Furthermore, for $d=3$ the constant $c_{0,3,\lambda,\mu}$ is explicitly given by
  \begin{equation*}
    c_{0,3,\lambda,\mu}:=\left(\frac{2^{4/3}(1+ 6\cot^2(\pi/12))}{3\pi^{4/3}\min\{\mu, \lambda+ 2\mu\}}\right)^{\frac{3}{2}}.
  \end{equation*}	
\end{corollary}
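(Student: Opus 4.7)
The first assertion is an immediate consequence of \Cref{thm:Lp-result}. Setting $\gamma=0$ in the inclusion \eqref{eq:thesis}, any eigenvalue $z\in\sigma_{\textup{p}}(-\Delta^\ast+V)$ would have to satisfy $1=|z|^0\leq c_{0,d,\lambda,\mu}\|V\|_{L^{d/2}(\R^d)}^{d/2}$, which is impossible under the smallness assumption $c_{0,d,\lambda,\mu}\|V\|_{L^{d/2}(\R^d)}^{d/2}<1$. Hence $\sigma_{\textup{p}}(-\Delta^\ast+V)=\varnothing$.

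To exhibit the explicit constant $c_{0,3,\lambda,\mu}$, I plan to revisit the proof of \Cref{thm:Lp-result} in the endpoint case $\gamma=0$, $d=3$, keeping track of all numerical factors. Via the Birman-Schwinger principle and the $\varepsilon\to 0^+$ approximation scheme with $K_{z+i\varepsilon}$ outlined in the introduction, absence of eigenvalues is reduced to proving that the operator $K_z=|V|^{1/2}(-\Delta^\ast-z)^{-1}V_{1/2}$ has norm strictly less than $1$ on $L^2(\R^3)^3$, uniformly in $z\in\C\setminus[0,\infty)$. By H\"older's inequality, sandwiching the resolvent between factors of $|V|^{1/2}$ in $L^3(\R^3)$ converts this into a uniform $L^{6/5}(\R^3)^3\to L^6(\R^3)^3$ estimate on $(-\Delta^\ast-z)^{-1}$ multiplied by $\|V\|_{L^{3/2}(\R^3)}$.

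To obtain the sharp resolvent estimate I would invoke \Cref{lem:helmholtz}: the orthogonal Helmholtz splitting reduces $(-\Delta^\ast-z)^{-1}$ to the direct sum of $(-\mu\Delta-z)^{-1}$ on solenoidal fields and $(-(\lambda+2\mu)\Delta-z)^{-1}$ on irrotational ones. The trivial rescaling $(-\alpha\Delta-z)^{-1}=\alpha^{-1}(-\Delta-z/\alpha)^{-1}$ moves the Lam\'e parameters into the denominator and produces the factor $1/\min\{\mu,\lambda+2\mu\}$, while the sharp Kenig--Ruiz--Sogge/Fefferman-type uniform resolvent inequality
\begin{equation*}
\|(-\Delta-z)^{-1}\|_{L^{6/5}(\R^3)\to L^6(\R^3)}\leq \frac{2^{4/3}}{3\pi^{4/3}},
\end{equation*}
supplies the $\pi^{4/3}$ denominator. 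Finally, the Leray--Helmholtz projectors acting between the Helmholtz components of $L^{6/5}(\R^3)^3$ are composed of scalar Riesz transforms $\mathcal{R}_j$, whose sharp norm is furnished by the Iwaniec--Martin bound $\|\mathcal{R}_j\|_{L^p\to L^p}\leq \cot(\pi/(2p^\ast))$ with $p^\ast=\max(p,p')$; at $p=6/5$ this gives $\cot(\pi/12)$, and summing the squared contributions over the three coordinate directions in each slot of the vector-valued projector yields the combinatorial factor $1+6\cot^2(\pi/12)$.

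The main obstacle I anticipate is precisely this last step of the bookkeeping. A naive application of the triangle inequality after the Helmholtz splitting, or a crude Calder\'on--Zygmund-type estimate for the Riesz transforms, would already establish the \emph{qualitative} statement of the corollary but would spoil the numerical constant, producing something strictly larger than $1+6\cot^2(\pi/12)$. The sharpness stems from pairing the Iwaniec--Martin inequality with the $(6/5,6)$-endpoint of the uniform Laplacian resolvent estimate, and from combining the two Helmholtz components in such a way that their contributions factor cleanly through $\|V\|_{L^{3/2}(\R^3)}$ and assemble into the expression displayed in the statement, ultimately raised to the power $3/2$ to match the Birman--Schwinger threshold.
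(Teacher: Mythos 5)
Your proposal is correct and follows essentially the same route as the paper: the first assertion is read off from \Cref{thm:Lp-result} at $\gamma=0$, and the explicit constant for $d=3$ is exactly the content of \eqref{BS-Lp-d=3} in \Cref{lemma_BS-estimates-d=3}, obtained from the Helmholtz splitting \eqref{eq:resolvent}, the sharp $L^{6/5}\to L^6$ bound $2^{4/3}/(3\pi^{4/3})$ for $(-\Delta-z)^{-1}$, the almost-orthogonality factor $1+2dc_{6/5}^2=1+6\cot^2(\pi/12)$ from \Cref{lemma:orthogonality}, and H\"older, raised to the power $d/2=3/2$. The only cosmetic difference is that the paper derives the sharp $(6/5,6)$ resolvent bound by dominating the three-dimensional Green function $\mathcal{G}_z(x,y)$ pointwise by $(4\pi|x-y|)^{-1}$ and applying the sharp Hardy--Littlewood--Sobolev inequality, which is the same estimate you cite in Kenig--Ruiz--Sogge form.
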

\begin{remark}
In the context of Schr\"odinger operators, seeking for optimal
conditions on both local integrability and asymptotic decay of the
potentials under which absence of embedded eigenvalues is guaranteed
has yielded a considerable bibliography. Ionescu and Jerison
in~\cite{IJ2003} obtained absence of embedded eigenvalues for $V\in
L^{d/2}$ (or $V\in L^p,$ $p>1$ if $d=2$). We stress that as long as
local integrability conditions are investigated, this result is
optimal,  indeed Koch and Tataru in~\cite{KT2002} constructed non
trivial compactly supported solutions of the $0$-eigenvalue equation
$\Delta u=Vu$ with $V\in L^p_\textup{loc},$ $p<d/2$ for $d\geq 3$ (and
$V\in L^1_\textup{loc}$ for $d=2$). Later, Koch and Tataru
in~\cite{KT2006} proved the same result as in~\cite{IJ2003} for
potentials $V$ with the least possible decay at infinity, including
$V\in L^{(d+1)/2}.$ The exponent $(d+1)/2,$ $d\geq 2$ is the highest
possible, indeed Ionescu and Jerison~\cite{IJ2003} first and Frank and
Simon~\cite{FS17} later showed that there are operators with
potentials $V\in L^p,$ $p>(d+1)/2$ which admit positive
eigenvalues. Absence of embedded eigenvalues in the spirit
of~\cite{KT2002} for vector-valued Schr\"odinger operators was recently
obtained in~\cite{CM20}. In light of this remark, the constraint on $\gamma$ in~\Cref{thm:Lp-result} are rather natural.
\end{remark}

For potentials in the Morrey-Campanato class we prove the next
results, counterpart of \cite[Theorem 1.3, Corollary 1.1]{Co19}.
\begin{theorem}
\label{thm:MC-result}
	Let $d\geq 2,$  $(d-1)(2\gamma + d)/ 2(d-2\gamma)<p\leq \gamma + {d/2}$ with $0<\gamma\leq 1/2$ if $d=2$ and $0\leq \gamma\leq 1/2$ if $d\geq 3$ and assume $V\in \mathcal{L}^{\alpha,p}(\R^d;\C^{d\times d})$ with $\alpha=2d/(2\gamma+d).$ 	 
	Then there exists a universal constant $c_{\gamma, p, d, \lambda, \mu}>0$ independent on $V$ such that 
	\begin{equation*}
		\sigma_\textup{p}(-\Delta^\ast + V)
		\subset \left\{z\in \C\colon |z|^\gamma \leq c_{\gamma, p, d,\lambda, \mu} \| V \|_{\mathcal{L}^{\alpha,p}(\R^d)}^{\gamma + \frac{d}{2}}\right \}.
	\end{equation*}
\end{theorem}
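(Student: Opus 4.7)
My plan is to extend \cite[Theorem~1.3]{Co19} (which already proves the claim for $z\in\C\setminus[0,\infty)$) to embedded eigenvalues $z\in[0,\infty)$ by means of the approximating Birman-Schwinger scheme sketched in the introduction. For $\varepsilon>0$ the shifted parameter $z+i\varepsilon$ lies in $\C\setminus\sigma(-\Delta^\ast)$, so $K_{z+i\varepsilon}:=\abs{V}^{1/2}(-\Delta^\ast-z-i\varepsilon)^{-1}V_{1/2}$ is bounded on $L^2(\R^d)^d$, and the resolvent estimate that underpins \cite[Theorem~1.3]{Co19} supplies, uniformly in $\varepsilon>0$,
\begin{equation*}
\norm{K_{z+i\varepsilon}}_{L^2\to L^2}^{\gamma+\frac{d}{2}}\le c_{\gamma,p,d,\lambda,\mu}\,\abs{z+i\varepsilon}^{-\gamma}\norm{V}_{\mathcal{L}^{\alpha,p}(\R^d)}^{\gamma+\frac{d}{2}}.
\end{equation*}

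Next I would fix a non-trivial eigenfunction $u$ with eigenvalue $z\ge 0$ and set $\psi:=\abs{V}^{1/2}u$. Since $Vu\equiv 0$ would imply $-\Delta^\ast u=zu$, which is impossible as $\sigma_{\textup{p}}(-\Delta^\ast)=\varnothing$, we have $\psi\neq 0$. Applying $\abs{V}^{1/2}(-\Delta^\ast-z-i\varepsilon)^{-1}$ to $(-\Delta^\ast-z-i\varepsilon)u=-Vu-i\varepsilon u$ yields the perturbed Birman-Schwinger identity
\begin{equation*}
\psi+K_{z+i\varepsilon}\psi=\rho_\varepsilon,\qquad \rho_\varepsilon:=-i\varepsilon\,\abs{V}^{1/2}(-\Delta^\ast-z-i\varepsilon)^{-1}u.
\end{equation*}
Once $\norm{\rho_\varepsilon}_{L^2}\to 0$ is established, the triangle inequality gives $\liminf_{\varepsilon\to 0^+}\norm{K_{z+i\varepsilon}}_{L^2\to L^2}\ge 1$, and inserting this in the uniform resolvent bound above produces the desired conclusion $\abs{z}^\gamma\le c_{\gamma,p,d,\lambda,\mu}\norm{V}_{\mathcal{L}^{\alpha,p}(\R^d)}^{\gamma+d/2}$.

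The hard part is to show $\rho_\varepsilon\to 0$ in $L^2$. Since $V\in\mathcal{L}^{\alpha,p}(\R^d)$ with $\alpha=2d/(2\gamma+d)$ and $\mathcal{L}^{\alpha,p}\subseteq\mathcal{KS}_\alpha$, the weighted fractional-integration inequality \eqref{eq:est_fract_int} gives continuity of $\abs{V}^{1/2}$ as a map $H^{\alpha/2}(\R^d)\to L^2(\R^d)$, whence
\begin{equation*}
\norm{\rho_\varepsilon}_{L^2}\le C(V)\,\norm{\varepsilon(-\Delta^\ast-z-i\varepsilon)^{-1}u}_{H^{\alpha/2}}.
\end{equation*}
Functional calculus on the self-adjoint $-\Delta^\ast$, whose spectral measure is non-atomic, then forces $\varepsilon(-\Delta^\ast-z-i\varepsilon)^{-1}u\to 0$ in any Sobolev space in which $u$ already lies; since $u$ belongs to the form domain of $-\Delta^\ast+V$, which under the MC assumption contains $H^{\alpha/2}(\R^d)^d$, the required convergence follows.

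The truly delicate point is precisely this absorption of the possibly unbounded multiplier $\abs{V}^{1/2}$ inside $\rho_\varepsilon$: the pointwise smallness of $\varepsilon(-\Delta^\ast-z-i\varepsilon)^{-1}u$ is an $L^2$-statement obtained by dominated convergence in the spectral measure of $u$, and upgrading it through a singular Morrey-Campanato weight is what forces the use of the Hardy-type bound \eqref{eq:Hardy}. Verifying in full that $u$ really belongs to $H^{\alpha/2}(\R^d)^d$, i.e.\ that $-\Delta^\ast+V$ is a well-posed closed form sum under \eqref{eq:est_fract_int}, is a preliminary one needs to dispatch, but it comes essentially for free from the same Morrey-Campanato assumption already used in \cite{Co19}.
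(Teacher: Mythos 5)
Your proposal is correct and follows essentially the same route as the paper: the approximating Birman--Schwinger identity $\psi+K_{z+i\varepsilon}\psi=\rho_\varepsilon$ is exactly the paper's relation $\phi_\varepsilon=-K_{z+i\varepsilon}\phi$ with $\rho_\varepsilon=\phi-\phi_\varepsilon$, the uniform bound on $\|K_{z+i\varepsilon}\|$ is \eqref{BS-MC}, and the conclusion follows by letting $\varepsilon\to 0$. The only variation is in proving $\rho_\varepsilon\to 0$: you push $|V|^{1/2}$ through $H^{\alpha/2}\to L^2$ via \eqref{eq:est_fract_int}, whereas the paper uses the $H^1\to L^2$ boundedness of Lemma~\ref{lemma:Frank}; both work, though note that your inclusion is stated backwards --- the form domain $H^1(\R^d)^d$ is \emph{contained in} $H^{\alpha/2}(\R^d)^d$ (since $\alpha\leq 2$), which is the direction you actually need to conclude $u\in H^{\alpha/2}(\R^d)^d$.
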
	
\begin{corollary}
	If $d\geq 3$ and 
	\begin{equation*}
		c_{0, p, d, \lambda, \mu}\|V\|_{\mathcal{L}^{2,p}(\R^d)}^{\frac{d}{2}}<1,
	\end{equation*}
	then $-\Delta^\ast + V$ has no eigenvalues.
	Furthermore, for $d=3$ the constant is explicitly given by
        \begin{equation*}
        c_{0,p,3,\lambda,\mu}:=\Big(\frac{c_\textup{F}(1+ 6C^2)}{\min\{\mu, \lambda+2\mu\}}\Big)^{\frac{3}{2}},
      \end{equation*}
with $c_\textup{F}=c_\textup{F}(p)$ as in~\eqref{eq:d=3-a} and  $C>0$ independent on $V.$
\end{corollary}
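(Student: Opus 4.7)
The plan is to derive this corollary as a direct consequence of the preceding \Cref{thm:MC-result} specialized to $\gamma=0$, which is admissible precisely because $d\geq 3$. Concretely, suppose by contradiction that $z\in \sigma_\textup{p}(-\Delta^\ast + V)$. With $\gamma=0$ and $\alpha=2d/d=2$, the range $(d-1)/2<p\leq d/2$ reproduces the hypothesis on $p$, and \Cref{thm:MC-result} yields
\[
1 = |z|^0 \leq c_{0,p,d,\lambda,\mu}\,\|V\|_{\mathcal{L}^{2,p}(\R^d)}^{d/2}.
\]
The assumed smallness $c_{0,p,d,\lambda,\mu}\,\|V\|_{\mathcal{L}^{2,p}(\R^d)}^{d/2}<1$ contradicts this, so $\sigma_\textup{p}(-\Delta^\ast+V)=\varnothing$. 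This is the same contradiction scheme as in~\eqref{eq:gamma=0}.

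The substantive work is tracking the explicit value of $c_{0,p,3,\lambda,\mu}$. Here I would revisit the Birman--Schwinger argument underlying \Cref{thm:MC-result}: for $z\in \C\setminus[0,\infty)$, the operator $K_z=|V|^{1/2}(-\Delta^\ast-z)^{-1}V_{1/2}$ must have norm at least $1$ whenever $z$ is an eigenvalue, and for embedded $z\in[0,\infty)$ one uses $K_{z+i\varepsilon}$ and passes to the limit $\varepsilon\to 0^+$. The key reduction is the Helmholtz decomposition (\Cref{lem:helmholtz}), which splits $u=u_\textup{df}+u_\textup{cf}$ with $\div u_\textup{df}=0$ and $\curl u_\textup{cf}=0$, and diagonalizes the Lam\'e resolvent into a scalar Laplacian resolvent acting with coefficient $\mu$ on the divergence-free part and coefficient $\lambda+2\mu$ on the curl-free part. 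This is the origin of the prefactor $1/\min\{\mu,\lambda+2\mu\}$.

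Applied to each summand, the $\gamma=0$, $d=3$, $\alpha=2$ bound on the scalar Schr\"odinger Birman--Schwinger kernel reduces to the Morrey--Campanato Hardy inequality~\eqref{eq:Hardy}--\eqref{eq:d=3-a}, which in this setting gives $\int |V||f|^2\,dx\leq c_\textup{F}\|V\|_{\mathcal{L}^{2,p}(\R^3)}\int|\nabla f|^2\,dx$. The divergence-free part is estimated directly by this inequality, while the curl-free part produces an additional contribution $6C^2\cdot c_\textup{F}\|V\|_{\mathcal{L}^{2,p}}$ through the weighted $L^2$ boundedness of the Riesz transform on the weight $|V|$. Combining the two contributions yields
\[
\|K_z\|_{L^2\to L^2}\leq \frac{c_\textup{F}(1+6C^2)}{\min\{\mu,\lambda+2\mu\}}\,\|V\|_{\mathcal{L}^{2,p}(\R^3)},
\]
and imposing $\|K_z\|\geq 1$ then inverts to give $c_{0,p,3,\lambda,\mu}=\bigl(c_\textup{F}(1+6C^2)/\min\{\mu,\lambda+2\mu\}\bigr)^{3/2}$.

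The main obstacle, in my view, is making the constant $C$ in the curl-free contribution explicit and uniform in $V$: without an $A_2$ assumption on $|V|$, the weighted boundedness of the Riesz transform with respect to $|V|$ is not automatic, and one has to exploit that the weight lies in $\mathcal{L}^{2,p}(\R^3)$ with $p>1$ together with the Fefferman-type inequality~\eqref{eq:est_fract_int}--\eqref{eq:MC-fract-int}. This is precisely the point where the Lam\'e case deviates from the scalar Schr\"odinger case, as highlighted in the introduction, and is what forces the constant $C$ (independent of $V$ but depending on $p$ and $d$) to appear in the final expression.
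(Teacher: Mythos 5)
Your overall route is the paper's: the corollary is \Cref{thm:MC-result} specialized to $\gamma=0$ (admissible since $d\geq 3$), giving $1\leq c_{0,p,d,\lambda,\mu}\|V\|_{\mathcal{L}^{2,p}(\R^d)}^{d/2}$ for any putative eigenvalue $z$ and hence the contradiction, and the explicit three-dimensional constant is read off from the Birman--Schwinger bound \eqref{BS-MC-d=3} of \Cref{lemma_BS-estimates-d=3}, which is proved exactly as you sketch: pointwise bound $|\mathcal{G}_\zeta(x,y)|\leq \mathcal{G}_0(x,y)$, the Hardy inequality \eqref{eq:Hardy} with \eqref{eq:d=3-a}, and the Helmholtz splitting producing the prefactor $1/\min\{\mu,\lambda+2\mu\}$.

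Two remarks. First, a bookkeeping correction: the factor $1+6C^2$ does not split as ``divergence-free part estimated directly, curl-free part contributing $6C^2$''. Both Helmholtz projections are Riesz-transform expressions, \emph{cf.}~\eqref{eq:Helmholtz-Riesz}, and the factor comes from the almost-orthogonality estimate \eqref{eq:orthogonality-2} applied to $G=V_{1/2}g$ in the weighted space $L^2(|W|^{-1}dx)$, namely $\|G_\textup{S}\|+\|G_\textup{P}\|\leq (1+2dc_w^2)\|G\|=(1+6c_w^2)\|G\|$ for $d=3$; the ``$1$'' is the identity term in $(\pi_\textup{S}u)_j=u_j+\sum_k\mathcal{R}_j\mathcal{R}_ku_k$ and the $6c_w^2$ collects the two squared-Riesz contributions. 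Second, and more substantively, the uniformity in $V$ of the constant $C$ --- which you correctly single out as the main obstacle but leave unresolved --- is closed in the paper by \Cref{lemma:MC-M} (Chiarenza--Frasca): one majorizes $V\leq W:=(MV^{p_1})^{1/p_1}$ with $1<p_1<p$, where $W\in A_1(\R^3)\cap\mathcal{L}^{2,p}(\R^3)$ has $A_1$ (hence $A_2$) constant bounded independently of $V$ and $\|W\|_{\mathcal{L}^{2,p}(\R^3)}\leq C\|V\|_{\mathcal{L}^{2,p}(\R^3)}$ by \eqref{eq:W-V}; the whole Birman--Schwinger estimate is then run with $W$ in place of $|V|$, returning to $V$ via $|W|^{-1}\leq|V|^{-1}$. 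Without this majorization the constant in \eqref{BS-MC-d=3} would still carry $Q_2(|V|)$, and the claim that $C$ is independent of $V$ would not be justified; this is the precise sense in which the hypothesis $p>1$ is used.
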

\begin{remark}
	We recall that, thanks to the H\"older inequality,
	\begin{equation*}
		\|V\|_{\mathcal{L}^{\alpha,p}(\R^d)}\leq \mathcal{V}_d^{\frac{1}{p}-\frac{\alpha}{d}} \|V\|_{L^\frac{d}{\alpha}(\R^d)},
	\end{equation*} 
	for $\alpha>0$ and $1\leq p \leq d/\alpha$ and where
        $\mathcal{V}_d$ denotes the volume of the unit $d$-dimensional
        ball. As a consequence, \Cref{thm:Lp-result} follows from
        \Cref{thm:MC-result} for $c_{\gamma,d,\lambda,\mu}$
        in~\eqref{eq:thesis} equal to $c_{\gamma,p,d,\lambda, \mu}
        (\mathcal{V}_{\alpha}^{1/p-\alpha/d})^{\gamma+ d/2},$ with
        $\alpha,$ $p$ and $c_{\gamma,p,d,\lambda, \mu}$ as in
        \Cref{thm:MC-result}. Nonetheless, we decided to state
        and also give an alternative proof of
        \Cref{thm:Lp-result} as it is of interest in its own
        right. As a matter of fact, in dimension $d=3$ and for $\gamma=0,$  this
        alternative direct proof
        provides an explicit bound on the constant
        $c_{\gamma,d,\lambda,\mu}$ in~\eqref{eq:thesis}
        and, in turn, on the smallness of the size of the potential in
        order to guarantee absence of eigenvalues. 
\end{remark}
Finally, the following theorem is the counterpart of \cite[Theorem
1.4]{Co19}, treating potentials in the Kerman-Saywer class.
\begin{theorem}\label{thm:KS-result}
	Let $d\geq 2,$ $1/3\leq \gamma<1/2$ if $d=2$ and $0\leq
        \gamma<1/2$ if $d\geq 3$ and assume
        $|V|^\beta\in\mathcal{KS}_{\alpha}(\R^d)$ with $\alpha=2d\beta
        (2\gamma +d)$ and $\beta=(d+2\gamma)(d-1)/[2(d-2\gamma)].$ If
        $|V|\in A_2(\R^d)$ then there exists a constant $c_{\gamma, d, \lambda, \mu}>0$ independent on $V$ such that 
	\begin{equation*}
		\sigma_\textup{p}(-\Delta^\ast + V)
		\subset \left\{z\in \C\colon |z|^\gamma \leq c_{\gamma, d, \lambda, \mu}\, Q_2(|V|)^{2\gamma +d} 
		\| |V|^\beta \|_{\mathcal{KS}_{\alpha}(\R^d)}^{\frac{1}{\beta}(\gamma +\frac{d}{2})}\right \}. 
	\end{equation*}
\end{theorem}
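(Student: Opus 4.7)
The case $z\in\C\setminus[0,\infty)$ coincides with \cite[Thm.~1.4]{Co19}, so it suffices to treat embedded eigenvalues $z\in[0,\infty)$. Following the strategy advertised in the introduction, I would work with the regularized Birman--Schwinger operators
\[
K_{z+i\varepsilon} := |V|^{1/2}(-\Delta^\ast - z - i\varepsilon)^{-1}V_{1/2},\qquad \varepsilon>0,
\]
prove a uniform-in-$\varepsilon$ bound on $\|K_{z+i\varepsilon}\|_{L^2\to L^2}$ by transcribing the proof of \cite[Thm.~1.4]{Co19} to the complex parameter $w=z+i\varepsilon$, and then let $\varepsilon\to 0^+$.

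Let $\psi\in H^1(\R^d)^d$ be a nontrivial eigenfunction with eigenvalue $z$ and set $\phi:=|V|^{1/2}\psi\in L^2$. The vector $\phi$ cannot vanish, for otherwise $V\psi\equiv 0$ would force $\psi$ to be an $L^2$-eigenfunction of $-\Delta^\ast$, contradicting $\sigma(-\Delta^\ast)=\sigma_\textup{ac}(-\Delta^\ast)$. Using $(-\Delta^\ast-z)\psi=-V_{1/2}\phi$, applying $|V|^{1/2}(-\Delta^\ast-z-i\varepsilon)^{-1}$ and inserting $-\Delta^\ast-z=(-\Delta^\ast-z-i\varepsilon)+i\varepsilon$ yields the \emph{regularized} Birman--Schwinger identity
\[
(I+K_{z+i\varepsilon})\phi=-R_\varepsilon,\qquad R_\varepsilon:=|V|^{1/2}u_\varepsilon,\quad u_\varepsilon:=i\varepsilon(-\Delta^\ast-z-i\varepsilon)^{-1}\psi.
\]
Because $-\Delta^\ast$ is self-adjoint with empty point spectrum, the spectral theorem (applied via dominated convergence to the symbol $|i\varepsilon/(\lambda-z-i\varepsilon)|^2(1+\lambda)$ against the scalar spectral measure of $\psi$) gives $u_\varepsilon\to 0$ strongly in $H^1(\R^d)^d$. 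The Hardy-type inequality~\eqref{eq:Hardy}, available precisely because $|V|^\beta\in\mathcal{KS}_\alpha$ together with $|V|\in A_2$ supplies~\eqref{eq:est_fract_int}, then yields $\|R_\varepsilon\|_{L^2}^2\leq a_V\|\nabla u_\varepsilon\|_{L^2}^2\to 0$. The regularized identity now forces $\|K_{z+i\varepsilon}\phi\|_{L^2}\geq \|\phi\|_{L^2}-\|R_\varepsilon\|_{L^2}$, hence $\liminf_{\varepsilon\to 0^+}\|K_{z+i\varepsilon}\|_{L^2\to L^2}\geq 1$.

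For the matching upper bound I would revisit the proof of \cite[Thm.~1.4]{Co19}: it rests on weighted resolvent estimates for $(-\Delta^\ast-w)^{-1}$ whose constants depend on $|w|^{-\gamma}$ only, uniformly for $w\in\C\setminus[0,\infty)$. Concretely, the Helmholtz decomposition (\Cref{lem:helmholtz}) reduces $K_w$ to two scalar pieces of the form $|V|^{1/2}(-c\Delta-w)^{-1}V_{1/2}$ with $c\in\{\mu,\lambda+2\mu\}$, each controlled by combining the Fefferman/Kerman-Sawyer bound~\eqref{eq:est_fract_int}--\eqref{eq:const.k-s} on fractional integrals with the $A_2$-weighted boundedness of the Riesz transform (\Cref{boundedness-Riesz-transform}); the latter is responsible for the Muckenhoupt factor $Q_2(|V|)^{2\gamma+d}$. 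The outcome, uniform in $\varepsilon>0$, is
\[
\|K_{z+i\varepsilon}\|_{L^2\to L^2}^{\gamma+d/2}\leq c_{\gamma,d,\lambda,\mu}\,|z+i\varepsilon|^{-\gamma}\,Q_2(|V|)^{2\gamma+d}\,\||V|^\beta\|_{\mathcal{KS}_\alpha}^{(\gamma+d/2)/\beta}.
\]
Combining with the lower bound and letting $\varepsilon\to 0^+$ delivers the conclusion of the theorem.

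I expect the main obstacle to be the limiting absorption step $\|R_\varepsilon\|_{L^2}\to 0$: one must control the singular weight $|V|^{1/2}$ against the regularizing factor $\varepsilon(-\Delta^\ast-z-i\varepsilon)^{-1}$, which is non-trivial for potentials that are merely Kerman-Sawyer. The useful observation is that the very hypotheses of the theorem ($|V|^\beta\in\mathcal{KS}_\alpha$ and $|V|\in A_2$) play a double role: besides producing the resolvent upper bound on $\|K_w\|$, they imply, via~\eqref{eq:est_fract_int}, the Hardy inequality~\eqref{eq:Hardy} that provides the $H^1\to L^2(|V|\,dx)$ continuity legitimating the vanishing of $R_\varepsilon$. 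Everything else is an $\varepsilon$-uniform transcription of the argument in~\cite{Co19}.
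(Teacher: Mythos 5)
Your argument is, up to algebra, the paper's own. Your regularized identity $(I+K_{z+i\varepsilon})\phi=-R_\varepsilon$ is exactly the paper's relation $\phi_\varepsilon=-K_{z+i\varepsilon}\phi$: the paper's approximating eigenfunction $u_\varepsilon=(-\Delta^\ast-z-i\varepsilon)^{-1}(-\Delta^\ast-z)u$ equals $u+i\varepsilon(-\Delta^\ast-z-i\varepsilon)^{-1}u$, so its $\phi_\varepsilon$ is your $\phi+R_\varepsilon$, and your spectral-theorem argument for $R_\varepsilon\to0$ is the paper's dominated-convergence-in-Fourier argument for $\phi_\varepsilon\to\phi$. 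The uniform upper bound on $\|K_{z+i\varepsilon}\|$ is \eqref{BS-KS} of \Cref{lemma_BS-estimates}, obtained exactly as you describe from the weighted resolvent estimate \eqref{Lame_res_3} via Helmholtz decomposition and the $A_2$-weighted Riesz transform bound.

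The one step that needs repair is your justification of $\|R_\varepsilon\|_{L^2}\to0$. You invoke the Hardy inequality \eqref{eq:Hardy}, i.e.\ \eqref{eq:est_fract_int} for the weight $|V|$ with $\alpha=2$; but the hypothesis of the theorem gives \eqref{eq:est_fract_int} for the weight $|V|^\beta$ with $\alpha=2d\beta/(2\gamma+d)$, and these coincide only when $\beta=1$ and $\alpha=2$ (essentially $d=3$, $\gamma=0$; for $d=2$, $\gamma=1/3$ one has $\beta=1$ but $\alpha=3/2$). For general admissible $\gamma$ the hypotheses do not yield $|V|\in\mathcal{KS}_2$, hence not \eqref{eq:Hardy}. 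The fact you actually need --- boundedness of multiplication by $|V|^{1/2}$ from $H^1(\R^d)^d$ to $L^2(\R^d)^d$ --- is nevertheless true under the stated hypotheses: this is \Cref{lemma:Frank}, which the paper proves by fixing $z_0\in(-\infty,0]$, writing $\int_{\R^d}|V||u|^2=\||V|^{1/2}(-\Delta^\ast-z_0)^{-1/2}(-\Delta^\ast-z_0)^{1/2}u\|_{L^2}^2$ and using the $L^2$-boundedness of $|V|^{1/2}(-\Delta^\ast-z_0)^{-1}|V|^{1/2}$, i.e.\ the very Birman--Schwinger estimate you already use for the upper bound. Substituting that lemma for your appeal to \eqref{eq:Hardy} closes the argument; everything else matches the paper.
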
	
\begin{corollary}\label{cor:KS}
	If $d\geq 3$ and 
	\begin{equation*}
		c_{0, d, \lambda, \mu}\, Q_2(|V|)^d \| |V| ^\frac{d-1}{2}\|_{\mathcal{KS}_{d-1}}^{\frac{d}{d-1}}<1,
	\end{equation*}
	then $-\Delta^\ast + V$ has no eigenvalues.
Furthermore, for $d=3$ the constant is explicitly given by
\begin{equation*}
c_{0,3,\lambda,\mu}:=\Big(\frac{c_{\textup{KS}}(1+6 C^2)}{\min\{\mu,\lambda+ 2\mu\}}\Big)^{\frac{3}{2}},
\end{equation*}
    with $c_{\textup{KS}}$ as in~\eqref{eq:d=3-a} and
 $C>0$ independent on $V.$ 		
\end{corollary}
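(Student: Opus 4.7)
The plan is to read off this corollary directly from \Cref{thm:KS-result} by specializing to $\gamma=0$, and then to extract the explicit constant in dimension three by tracking the quantitative ingredients of the Birman-Schwinger argument that underlies that theorem. For $d\geq 3$ the endpoint $\gamma=0$ lies in the admissible range of \Cref{thm:KS-result}, and the associated exponents reduce to $\beta=(d-1)/2$ and $\alpha=d-1$. The inclusion in the theorem then reads, for any $z\in\sigma_\textup{p}(-\Delta^\ast+V)$,
\begin{equation*}
1=|z|^0 \leq c_{0,d,\lambda,\mu}\, Q_2(|V|)^d\, \||V|^{(d-1)/2}\|_{\mathcal{KS}_{d-1}(\R^d)}^{d/(d-1)}.
\end{equation*}
The hypothesis of the corollary asserts that the right-hand side is strictly less than $1$, giving a contradiction; hence no such $z$ can exist and $\sigma_\textup{p}(-\Delta^\ast+V)=\varnothing$.

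For the explicit constant in $d=3$ I would revisit the Birman-Schwinger reduction on which \Cref{thm:KS-result} rests. At $\gamma=0$ the existence of an eigenvalue $z$ forces $\||V|^{1/2}(-\Delta^\ast-z)^{-1}V_{1/2}\|_{L^2\to L^2}\geq 1$, so every upper bound on this operator norm translates into a lower bound on the admissible size of $V$. The Helmholtz decomposition (\Cref{lem:helmholtz}) diagonalizes $-\Delta^\ast$ into a scalar Laplacian $-\mu\Delta$ on divergence-free fields and $-(\lambda+2\mu)\Delta$ on gradient fields; applying it to the resolvent and using the worst-case ellipticity constant produces the factor $1/\min\{\mu,\lambda+2\mu\}$. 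Combining this with the Hardy-type inequality \eqref{eq:Hardy}--\eqref{eq:d=3-a} with weight $|V|\in\mathcal{KS}_2(\R^3)$ contributes the Kerman-Sawyer constant $c_\textup{KS}$, while the $|V|$-weighted $L^2$ bound on Riesz transforms needed to convert the vector-valued Birman-Schwinger kernel to scalar form supplies a constant $C>0$ depending on $Q_2(|V|)$. Since in $d=3$ the Helmholtz projector splits into one scalar summand plus three components built from Riesz transforms, squaring in the operator-norm estimate produces the factor $(1+6C^2)$.

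Raising the resulting bound on the Birman-Schwinger norm to the power $3/2$ (coming from $\gamma+d/2=3/2$) yields exactly
\begin{equation*}
c_{0,3,\lambda,\mu}=\left(\frac{c_\textup{KS}(1+6C^2)}{\min\{\mu,\lambda+2\mu\}}\right)^{3/2}.
\end{equation*}
The main delicate point, and what I expect to be the principal obstacle, is to keep the $A_2$ characteristic $Q_2(|V|)$ isolated in the prefactor $Q_2(|V|)^d$ rather than entangled with $c_\textup{KS}$; this requires exploiting the non-uniform nature of the weighted Riesz-transform estimate (in contrast with the Schr\"odinger setting of~\cite{Lee_Seo}, where the Riesz transform plays no role and the analogous constant is $V$-independent), and it is precisely this $V$-dependent factor that is responsible for the pathological behaviour of the Lamé operator pointed out after \eqref{eq:KS-bound}.
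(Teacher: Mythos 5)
Your proposal is correct and follows essentially the same route as the paper: the first claim is exactly the specialization $\gamma=0$ (hence $\beta=(d-1)/2$, $\alpha=d-1$) of \Cref{thm:KS-result}, and the explicit three-dimensional constant is read off from the Birman--Schwinger bound \eqref{BS-KS-d=3} of \Cref{lemma_BS-estimates-d=3}, raised to the power $\gamma+d/2=3/2$. The only micro-step worth making explicit is the passage from the factor $(1+6c_V^2)^{3/2}=(1+6C^2Q_2(|V|)^2)^{3/2}$ to the stated form $Q_2(|V|)^3(1+6C^2)^{3/2}$, which uses $Q_2(|V|)\geq 1$.
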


 In the three dimensional case, 
the third author with Krej\v{c}i\v{r}\'ik and Vega proved in~\cite[Thm.~1]{FKV18} that the spectrum of
the three dimensional Schr\"odinger operator is stable under perturbations which satisfy the following subordination relation
\begin{equation}\label{eq:subodination-Schr}
	\exists\, a<1\quad \text{such that}	
	\quad \int_{\R^d} |V||u|^2\, dx\leq a \int_{\R^d} |\nabla u|^2\, dx,
	\qquad \forall\, u\in H^1(\R^d).
      \end{equation}
      In other words, under this assumption they not only show that
      the point spectrum is empty, but that the whole spectrum is
      absolutely continuous and equal to the spectrum of the
      unperturbed operator.
Their result relies on the proof of a variational one-sided version of
the conventional Birman-Schwinger principle extended to possible
eigenvalues embedded in the essential spectrum. This approach turned
out to be very robust, it was indeed adopted to investigate on the
spectrum of other Hamiltonians than the Schr\"odinger operators:
see~\cite{FK19} and~\cite{IKL19} for an adaptation to non-self-adjoint
Dirac and biharmonic operators, respectively.
We refer the reader to the recent work~\cite{Ha_Kr20} by Hansmann and Krej\v{c}i\v{r}\'ik for a systematic exposition of abstract Birman-Schwinger principles and their rigorous applications in spectral theory. 
To show the analogue result in the context of perturbed Lamé
operator, we need to recall that there exist $C>0$ such that for any $W \in
A_2(\R^3)$
the following sharp bound on the weighted $L^2$  operator norm of the Riesz transform
        $\mathcal{R}=(\mathcal{R}_1, \mathcal{R}_2,\dots,
        \mathcal{R}_d)$ is available, see \Cref{boundedness-Riesz-transform}:
        \begin{equation}\label{eq:see.lemma}
        \|\mathcal{R}_j\|_{L^2(W dx)\to L^2(W dx)}\leq c_W := C
        \,Q_2(W), \quad \text{ for all }j=1,\dots,d,
      \end{equation}
      where $Q_2(W)$ is the $0$--homogeneous $A_2$ constant of $W$ defined in
      \eqref{Q_p} below. 
\begin{theorem}\label{thm:dim3}
	Let $d=3.$ Assume that $V: \R^3 \to \C^{3\times 3}$, $|V|\in A_2(\R^3)$ and 
	\begin{equation}\label{eq:cond-FKV-d3}
		\exists\, a<\frac{\min\{\mu, \lambda+ 2\mu\}}{1+6c_V^2}\quad \text{such that}	
	\quad \int_{\R^3} |V||u|^2\, dx\leq a \int_{\R^3} |\nabla u|^2\, dx,
		\qquad \forall u\in H^1(\R^3),
	\end{equation} 
	with $c_V = c_{|V|}$ given by \eqref{eq:see.lemma}.
	Then $\sigma(-\Delta^\ast+V)=\sigma_\textup{c}(-\Delta^\ast+V)=[0,\infty).$
\end{theorem}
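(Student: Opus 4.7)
The plan is to adapt to the Lam\'e setting the variational Birman--Schwinger machinery developed by Frank, Krej\v{c}i\v{r}\'ik and Vega in~\cite{FKV18} for the scalar Schr\"odinger operator. By the abstract variational Birman--Schwinger principle of~\cite{Ha_Kr20} combined with the stability of the essential spectrum, it suffices to prove the uniform bound
\begin{equation*}
\|K_z\|_{L^2(\R^3)^3 \to L^2(\R^3)^3} < 1 \qquad \text{for every } z \in \C,
\end{equation*}
for the Birman--Schwinger operator $K_z := |V|^{1/2}(-\Delta^\ast - z)^{-1} V_{1/2}$, where for $z \in [0,\infty) = \sigma(-\Delta^\ast)$ the operator $K_z$ is defined variationally as the limit of $K_{z + i\varepsilon}$ as $\varepsilon \to 0^+$. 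The same argument applied to the adjoint (for which~\eqref{eq:cond-FKV-d3} is symmetric, as it involves only $|V|$) rules out residual spectrum, yielding the claim $\sigma(-\Delta^\ast + V) = \sigma_\textup{c}(-\Delta^\ast + V) = [0,\infty)$.

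The key structural ingredient is the Helmholtz decomposition (\Cref{lem:helmholtz}): every $u \in L^2(\R^3)^3$ splits uniquely as $u = u_g + u_s$ with $u_g = P_g u$ curl-free and $u_s = P_s u = u - u_g$ solenoidal, and on these orthogonal invariant subspaces $-\Delta^\ast$ reduces to $-(\lambda+2\mu)\Delta$ and $-\mu\Delta$, respectively. This produces the resolvent decomposition
\begin{equation*}
(-\Delta^\ast - z)^{-1} = \tfrac{1}{\lambda+2\mu}\bigl(-\Delta - \tfrac{z}{\lambda+2\mu}\bigr)^{-1} P_g + \tfrac{1}{\mu}\bigl(-\Delta - \tfrac{z}{\mu}\bigr)^{-1} P_s,
\end{equation*}
and the quadratic form identity $\langle u, -\Delta^\ast u\rangle = (\lambda+2\mu)\|\div u\|^2 + \mu\|\curl u\|^2 \geq m \|\nabla u\|^2$, where $m := \min\{\mu, \lambda+2\mu\}$ and the last inequality exploits the three-dimensional vector calculus identity $\|\nabla u\|^2 = \|\div u\|^2 + \|\curl u\|^2$ for $u \in H^1(\R^3)^3$. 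Pairing a putative eigenfunction $u$ with the eigenvalue equation and taking real parts, the scalar subordination~\eqref{eq:cond-FKV-d3} immediately yields $u = 0$ whenever $\mathrm{Re}(z) \leq 0$, in fact under the weaker condition $a < m$.

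The remaining regime $\mathrm{Re}(z) > 0$, and in particular the embedded case $z \in [0,\infty)$, is the heart of the proof. Inserting the resolvent decomposition into $K_z$ reduces the problem to bounding two operators of the form $|V|^{1/2}(-\Delta - \tilde z)^{-1} P V_{1/2}$ with $P \in \{P_g, P_s\}$, each controlled by the scalar Schr\"odinger Birman--Schwinger estimate of~\cite[Prop.~5]{FKV18}, which under scalar subordination has operator norm at most $a$. The genuine novelty with respect to \cite{FKV18} is the need to commute the Helmholtz projections through the multiplication by $|V|^{1/2}$. Since $P_g$ and $P_s$ are finite sums of products of two Riesz transforms, the $A_2$-weighted $L^2$-boundedness recalled in \Cref{boundedness-Riesz-transform} allows this commutation at the cost of a factor proportional to $c_V^2$; keeping track of the combinatorics in dimension three then produces the quoted bound
\begin{equation*}
\|K_z\|_{L^2(\R^3)^3 \to L^2(\R^3)^3} \leq \frac{a(1 + 6c_V^2)}{m},
\end{equation*}
which is strictly less than one precisely under~\eqref{eq:cond-FKV-d3}. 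The main obstacle is exactly this commutation step, which makes essential use of the $A_2$ hypothesis on $|V|$ and is the source of the correction $1 + 6c_V^2$ (absent in the scalar bound $a<1$ of~\cite{FKV18}), together with the rigorous variational interpretation of $K_z$ at points $z \in [0, \infty)$, handled by combining the abstract framework of~\cite{Ha_Kr20} with a standard limiting absorption argument.
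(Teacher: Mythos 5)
Your treatment of the Birman--Schwinger bound is essentially the paper's own: the resolvent splitting via the Helmholtz decomposition, the pointwise domination of the Green kernel, and the $A_2$-weighted Riesz-transform bounds to push the projections past $|V|^{1/2}$ are exactly what produce the factor $1+6c_V^2$ in \Cref{lemma:bdd-K_z} and \Cref{lemma_BS-estimates-d=3}, and your route to absence of eigenvalues (limiting absorption $z+i\varepsilon$ for embedded points) and of residual spectrum (the adjoint is $H_{\overline{V}^{\,t}}$ and the hypothesis only sees $|V|$, equivalently $J$-self-adjointness as in \Cref{lemma:residual}) matches the paper. The exclusion of continuous spectrum off $[0,\infty)$, which you delegate to the abstract variational principle, is carried out in the paper by a concrete singular-sequence computation (\Cref{lemma:singular-sequences}) showing $\langle\phi_n,K_z\phi_n\rangle/\|\phi_n\|^2\to-1$; citing the abstract framework is acceptable, but you would still have to verify its hypotheses for the vector-valued, Helmholtz-decomposed resolvent, which is where the actual work sits.

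The genuine gap is the inclusion $[0,\infty)\subset\sigma(-\Delta^\ast+V)$, which you dispose of with the phrase ``stability of the essential spectrum.'' That is precisely what must be proved: $V$ is only form-subordinated, not relatively compact, so no Weyl-type perturbation theorem applies off the shelf, and $\|K_z\|<1$ says nothing about which points \emph{do} belong to the spectrum. The paper devotes \Cref{lemma:FKV}, \Cref{lemma:diagonalization} and \Cref{thm:inclusion} to this: one must construct, for each $z\in(0,\infty)$, a normalized sequence $\phi_n=\varphi_n u$ satisfying the weak form of the eigenvalue equation asymptotically, and the building block $u$ --- a bounded solution of $-\Delta^\ast u=zu$ with $|u|\equiv 1$ and bounded derivatives --- is obtained by diagonalizing the matrix symbol $L(\xi)=\mu|\xi|^2 I+(\lambda+\mu)\xi\xi^t$ and solving the resulting Helmholtz system. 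This is the one step that is genuinely new relative to the scalar case of~\cite{FKV18} (where a plane wave suffices), and your proposal does not supply it.
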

\begin{remark}
\label{rmk:d=3-KS}
Thanks to \eqref{eq:d=3-a}, a necessary and sufficient condition for the Hardy-type inequality
in~\eqref{eq:cond-FKV-d3} to hold is that $V$ belongs to the
Kerman-Saywer class $\mathcal{KS}_{2}(\R^3);$ furthermore in this
case, $a=c_\textup{KS} \|V\|_{\mathcal{KS}_2(\R^3)}$.
This entails that the subordination condition~\eqref{eq:cond-FKV-d3} holds if and only if $V\in \mathcal{KS}_{2}(\R^3)$ and $c_\textup{KS} \|V\|_{\mathcal{KS}_2(\R^3)}<\min\{\mu, \lambda+ 2\mu\}/(1+6c_V^2).$ We decided anyway to state~\Cref{thm:dim3} with the smallness condition~\eqref{eq:cond-FKV-d3} instead of requiring smallness of the Kerman-Sawyer norm of $V$ in line with what just pointed out, in order to keep with the subordination relation~\eqref{eq:subodination-Schr} introduced in~\cite{FKV18}.
\end{remark}

\begin{remark}
  In order to define
  $-\Delta^\ast + V$ as an m-sectorial operator it
  is sufficient to assume $a<\min\{\mu,
  \lambda+ 2\mu\}$, see \Cref{preliminaries}. The stronger condition
  on $a$ in~\eqref{eq:cond-FKV-d3} is needed to ensure the boundedness of
  the Birman-Schwinger operator
  $K_z:=|V|^{1/2}(-\Delta^\ast-z)^{-1}V_{1/2}$ with bound strictly
  less than one. We stress that the demand for the stronger smallness
  condition in~\eqref{eq:cond-FKV-d3} is connected to the elasticity
  framework of the Lamé operator and the need of the
  Helmholtz decomposition, refer to the proof of
  Lemma~\ref{lemma:bdd-K_z}. On the other hand, for the
  Birman-Schwinger operator associated to the Laplacian 
  $K_z^{\Delta}:=|V|^{1/2}(-\Delta -z)^{-1}V_{1/2},$ the validity
  of~\eqref{eq:subodination-Schr} directly gives $\|K_z^{\Delta}\|\leq
  a<1,$ with $a$ the same constant as in~\eqref{eq:subodination-Schr},
  refer to \cite[Lemma 1]{FKV18}. 
\end{remark}

In the following theorem we obtain the spectral stability stated in
\Cref{thm:dim3} in the case that $V$ belongs to the Morrey-Campanato class $\mathcal{L}^{2,p}(\R^3),$ $1<p\leq 3/2$. Notice that $\mathcal{L}^{2,3/2}(\R^3)=L^{3/2}(\R^3)$ is also covered.
\begin{theorem}
\label{thm:M-Cd3}
	Let $d=3.$ Assume $V\in \mathcal{L}^{2,p}(\R^3),$ $1<p\leq 3/2$ and
	\begin{equation}\label{eq:cond-d3-M-C}
          \frac{c_\textup{F}(1+ 6c_V^2)}{\min\{\mu,\lambda+2\mu\}}
          \|V\|_{\mathcal{L}^{2,p}(\R^3)}<1,
	\end{equation} 
	with $c_\textup{F}$ as in~\eqref{eq:d=3-a} and $c_V=c_{|V|}$ given by~\eqref{eq:see.lemma}. Then $\sigma(-\Delta^\ast+V)=\sigma_\textup{c}(-\Delta^\ast+V)=[0,\infty).$
\end{theorem}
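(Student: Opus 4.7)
The plan is to deduce \Cref{thm:M-Cd3} directly from \Cref{thm:dim3}, by checking that the Morrey--Campanato smallness hypothesis \eqref{eq:cond-d3-M-C} on $V$ is strong enough to imply the subordination condition \eqref{eq:cond-FKV-d3}. In other words, I would spend no time reproving the abstract Birman--Schwinger machinery and Helmholtz decomposition argument that already went into \Cref{thm:dim3}; the only substantive task is to convert the Morrey--Campanato hypothesis into a quantitative Hardy-type inequality with sharp constant.

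First I would invoke the Fefferman-type inequality recalled around \eqref{eq:est_fract_int}--\eqref{eq:d=3-a}: for $V \in \mathcal{L}^{2,p}(\R^3)$ with $1 < p \leq 3/2$, the $L^2$-weighted boundedness of $I_1$ with weight $|V|$ is equivalent, via the Plancherel identity and $\widehat{I_1 f}(\xi)=|\xi|^{-1}\widehat f(\xi)$, to the Hardy-type inequality
\begin{equation*}
\int_{\R^3} |V|\,|u|^2 \, dx \leq c_{\textup{F}} \|V\|_{\mathcal{L}^{2,p}(\R^3)} \int_{\R^3} |\nabla u|^2 \, dx, \qquad \forall u \in H^1(\R^3),
\end{equation*}
with $c_{\textup{F}}=c_{\textup{F}}(p)$ the constant in \eqref{eq:d=3-a}. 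Setting $a := c_{\textup{F}}\|V\|_{\mathcal{L}^{2,p}(\R^3)}$, the hypothesis \eqref{eq:cond-d3-M-C} is literally equivalent to $a(1+6c_V^2) < \min\{\mu, \lambda+2\mu\}$, that is,
\begin{equation*}
a < \frac{\min\{\mu, \lambda+2\mu\}}{1+6c_V^2},
\end{equation*}
which is precisely the subordination relation \eqref{eq:cond-FKV-d3} required by \Cref{thm:dim3}.

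Second, I would check that the remaining hypothesis of \Cref{thm:dim3}, namely $|V|\in A_2(\R^3)$, is already implicit in the statement: the constant $c_V = c_{|V|}$ appearing in \eqref{eq:cond-d3-M-C} is finite only when $|V|$ is an $A_2$ weight, by \eqref{eq:see.lemma}. Hence all hypotheses of \Cref{thm:dim3} are in force, and the conclusion $\sigma(-\Delta^\ast+V)=\sigma_\textup{c}(-\Delta^\ast+V)=[0,\infty)$ follows at once.

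There is really no obstacle beyond bookkeeping: the proof is a one-line reduction once the correct translation between the Morrey--Campanato norm and the $H^1$-form subordination constant is identified. The only thing to be careful about is to use the \emph{first} branch of \eqref{eq:d=3-a} (rather than the Kerman--Sawyer branch discussed in \Cref{rmk:d=3-KS}), so that the quantitative constant $c_{\textup{F}}$ matches exactly what appears in \eqref{eq:cond-d3-M-C}.
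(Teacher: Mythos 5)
Your reduction of \Cref{thm:M-Cd3} to \Cref{thm:dim3} is close in spirit to what the paper actually does (the paper observes that the whole chain of Propositions~\ref{thm:absence-evs-d3}--\ref{lemma:residual} only needs the form-domain inclusion guaranteed by the Hardy-type inequality~\eqref{eq:Hardy} together with $\|K_z\|\le\mathfrak{a}<1$, and then verifies these two facts for the Morrey--Campanato class), and your translation of \eqref{eq:cond-d3-M-C} into the subordination condition \eqref{eq:cond-FKV-d3} via $a=c_\textup{F}\|V\|_{\mathcal{L}^{2,p}(\R^3)}$ is arithmetically correct. The genuine gap is in your second step, where you dispose of the hypothesis $|V|\in A_2(\R^3)$ required by \Cref{thm:dim3} by declaring it ``implicit in the finiteness of $c_V$.'' This runs exactly against the point of the theorem: as the remark following \Cref{thm:M-Cd3} makes explicit, the Morrey--Campanato assumption is there precisely to \emph{drop} the $A_2$ requirement. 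A function in $\mathcal{L}^{2,p}(\R^3)$ need not be an $A_2$ weight at all (it may vanish on a set of positive measure, so that $Q_2(|V|)=\infty$), and under your literal reading the theorem would be vacuous for all such potentials, i.e.\ an empty special case of \Cref{thm:dim3}.

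The missing ingredient is \Cref{lemma:MC-M} (Chiarenza--Frasca): for $V\in\mathcal{L}^{2,p}(\R^3)$, $1<p\le 3/2$, there is a pointwise majorant $W\ge|V|$ with $W\in A_1(\R^3)\cap\mathcal{L}^{2,p}(\R^3)$, $\|W\|_{\mathcal{L}^{2,p}(\R^3)}\le C\|V\|_{\mathcal{L}^{2,p}(\R^3)}$ and $Q_2(W)$ bounded by a constant independent of $V$. In the proof of the key Birman--Schwinger bound~\eqref{BS-MC-d=3} in \Cref{lemma_BS-estimates-d=3}, the weighted estimates~\eqref{equiv}--\eqref{adjoint} and the almost-orthogonality~\eqref{eq:orthogonality-2} are applied with the weight $W^{-1}$ rather than $|V|^{-1}$, using $|V|\le W$ at the end to return to $\|g\|_{L^2}$; accordingly, the factor $1+6c_V^2$ in~\eqref{eq:cond-d3-M-C} is to be read as $1+6C^2Q_2(W)^2$, a $V$-independent quantity, not as $1+6C^2Q_2(|V|)^2$. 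As written, your argument proves the theorem only under the additional hypothesis $|V|\in A_2(\R^3)$; to obtain the statement as intended you must route the weighted Riesz-transform estimates through the majorant $W$, which is exactly the content of~\eqref{BS-MC-d=3} that the paper's proof invokes.
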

\begin{remark}
        In \Cref{thm:M-Cd3} the assumption that the potential $V$ is
        in the Morrey-Campanato class allows to drop the assumption
        that it belongs to $A_2(\R^3)$. Thanks to Lemma~\ref{lemma:MC-M},
        if $V$ is in the Morrey-Campanato class, $Q_2(V)$ can be
        bounded by a constant independent on $V;$ in turn from~\eqref{eq:see.lemma} one has that $c_V$ in~\eqref{eq:cond-d3-M-C} is independent on $V$ too.
\end{remark}
\begin{remark}	
  Notice that the smallness condition~\eqref{eq:cond-d3-M-C} ensures $\|K_z\|<1$
  being $K_z$ the Birman-Schwinger operator (\emph{cfr.}~\eqref{BS-MC-d=3} in~\Cref{lemma_BS-estimates-d=3}). From~\eqref{eq:cond-d3-M-C} one has in particular that $c_\textup{F}\|V\|_{\mathcal{L}^{2,p}(\R^3)}<\min\{\mu,\lambda+2\mu\},$ hence $-\Delta^\ast +V$ is well defined as an m-sectorial operator due to~\eqref{eq:Hardy} and~\eqref{eq:d=3-a} (see also Section~\ref{preliminaries}).
\end{remark} 

Even though the case of $V\in L^{3/2}(\R^3)$ is covered by the
previous result
($p=3/2$ is allowed in \Cref{thm:M-Cd3}),
if we restrict to the Lebesgue setting we are able to prove
a more explicit result than~\Cref{thm:M-Cd3}.
This comes from the availability in the $L^p$ framework of the Hardy-Littlewood-Sobolev inequality (see proof of~\eqref{BS-Lp-d=3} in Lemma~\ref{lemma_BS-estimates-d=3}). 
	More specifically, we can prove the following $L^p$ framed result. 
	\begin{theorem}\label{thm:L^p-d3}
	Let $d=3.$ Assume $V\in L^{3/2}(\R^3).$ If 
	\begin{equation}\label{eq:cond-d3-Lp}
		\frac{2^{4/3}(1+
                  6\cot^2(\pi/12))}{3\pi^{4/3}\min\{\mu,
                  \lambda+2\mu\}} \|V\|_{L^{3/2}(\R^3)}<1,
	\end{equation} 
	then $\sigma(-\Delta^\ast+V)=\sigma_\textup{c}(-\Delta^\ast+V)=[0,\infty).$
\end{theorem}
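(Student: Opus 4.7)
The plan is to adapt the Birman--Schwinger strategy underlying the proofs of \Cref{thm:dim3} and \Cref{thm:M-Cd3}, replacing the weighted $A_2$/Morrey--Campanato estimates with Lebesgue-based sharp inequalities tailored to $V\in L^{3/2}(\R^3)$: the sharp 3D Sobolev inequality $\|u\|_{L^6}^2\leq K\|\nabla u\|_{L^2}^2$ with $K=2^{4/3}/(3\pi^{4/3})$ (together with its Hardy--Littlewood--Sobolev form $\|(-\Delta)^{-1}g\|_{L^6}\leq K\|g\|_{L^{6/5}}$) and the sharp Iwaniec--Martin identity $\|\mathcal R_j\|_{L^6\to L^6}=\cot(\pi/12)$. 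As a preliminary, H\"older in $L^{3/2}\times L^3$ combined with sharp Sobolev yields
\[
\int_{\R^3}|V||u|^2\,dx\leq K\|V\|_{L^{3/2}(\R^3)}\int_{\R^3}|\nabla u|^2\,dx,\qquad u\in H^1(\R^3),
\]
and since \eqref{eq:cond-d3-Lp} implies $K\|V\|_{L^{3/2}}<\min\{\mu,\lambda+2\mu\}$, the operator $-\Delta^\ast+V$ is well-defined as m-sectorial via the construction recalled in \Cref{preliminaries}.

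The heart of the argument is to show $\|K_z\|_{L^2\to L^2}<1$ uniformly for $z\in\C\setminus[0,\infty)$, where $K_z=|V|^{1/2}(-\Delta^\ast-z)^{-1}V_{1/2}$. Invoking the Helmholtz decomposition (\Cref{lem:helmholtz}), the Lam\'e resolvent splits as
\[
(-\Delta^\ast-z)^{-1}=\tfrac{1}{\lambda+2\mu}(-\Delta-z_1)^{-1}P_\nabla+\tfrac{1}{\mu}(-\Delta-z_2)^{-1}(I-P_\nabla),
\]
with $z_1=z/(\lambda+2\mu)$, $z_2=z/\mu$, $P_\nabla$ the gradient Helmholtz projector, and the scalar resolvent acting componentwise; expanding $I-P_\nabla$ and applying the triangle inequality reduces matters to bounding one unprojected term $\||V|^{1/2}(-\Delta-z_2)^{-1}V_{1/2}f\|_{L^2}$ plus two gradient-projected terms $\||V|^{1/2}(-\Delta-z_i)^{-1}P_\nabla V_{1/2}f\|_{L^2}$, $i=1,2$. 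Each unprojected term is estimated via the pointwise domination $|(-\Delta-z)^{-1}g|\leq(-\Delta)^{-1}|g|$ (valid for $\Im\sqrt{z}\geq 0$), H\"older ($\int|V||h|^2\leq\|V\|_{L^{3/2}}\|h\|_{L^6}^2$), the HLS bound $\|(-\Delta)^{-1}g\|_{L^6}\leq K\|g\|_{L^{6/5}}$, and a final H\"older to dominate $\|V_{1/2}f\|_{L^{6/5}}$ by $\|V\|_{L^{3/2}}^{1/2}\|f\|_{L^2}$; this produces $K\|V\|_{L^{3/2}}\|f\|_{L^2}$. Each gradient-projected term uses in addition the representation $(P_\nabla g)_i=-\sum_j\mathcal R_i\mathcal R_j g_j$ and two applications of the sharp $L^6$ Iwaniec--Martin bound on scalar Riesz transforms, yielding at most $d\cot^2(\pi/12)\cdot K\|V\|_{L^{3/2}}\|f\|_{L^2}$. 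Summing the three pieces gives the prefactor $1+2d\cot^2(\pi/12)=1+6\cot^2(\pi/12)$ and hence
\[
\|K_z\|_{L^2\to L^2}\leq\frac{2^{4/3}(1+6\cot^2(\pi/12))\|V\|_{L^{3/2}(\R^3)}}{3\pi^{4/3}\min\{\mu,\lambda+2\mu\}}<1
\]
by \eqref{eq:cond-d3-Lp}.

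Since the bound is independent of $z$, it persists in the limit $\Im z\to 0^+$; the variational Birman--Schwinger principle (\cite{Ha_Kr20}, as exploited in the proof of \Cref{thm:dim3}) then excludes every eigenvalue of $-\Delta^\ast+V$, embedded or not. The absence of residual spectrum follows by applying the same argument to the adjoint $-\Delta^\ast+V^\ast$ (whose moduli satisfy $|V^\ast|=|V|$, preserving \eqref{eq:cond-d3-Lp}), and essential-spectrum stability under relatively form-small perturbations yields $\sigma(-\Delta^\ast+V)=\sigma_c(-\Delta^\ast+V)=[0,\infty)$. The main obstacle is the careful bookkeeping of the factor $1+6\cot^2(\pi/12)$: in particular, operating at the scalar double-Riesz-transform level (rather than invoking a vector-valued $L^6$ bound on $P_\nabla$, whose sharp constant is less transparent) is what allows the index sum $\sum_j\mathcal R_i\mathcal R_j g_j$ to pair cleanly with the two Helmholtz-gradient contributions, producing the factor $2d=6$.
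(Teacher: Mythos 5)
Your derivation of the key bound $\|K_z\|_{L^2\to L^2}\leq \frac{2^{4/3}(1+6\cot^2(\pi/12))}{3\pi^{4/3}\min\{\mu,\lambda+2\mu\}}\|V\|_{L^{3/2}(\R^3)}<1$, uniform in $z\in\C\setminus[0,\infty)$, is essentially the paper's proof of \eqref{BS-Lp-d=3} in \Cref{lemma_BS-estimates-d=3}: the paper estimates the bilinear form $\langle f,K_zg\rangle$ and applies the Riesz transforms in $L^{6/5}$ via \Cref{lemma:orthogonality}, which by duality carries the same constant $\cot(\pi/12)$ as your $L^6$ application, and the bookkeeping of the factor $1+6\cot^2(\pi/12)$ is identical. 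Your deduction of $\sigma_\textup{p}(-\Delta^\ast+V)=\varnothing$ from this uniform bound matches \Cref{thm:absence-evs-d3}, and your treatment of the residual spectrum through the adjoint $H_{\overline{V}^{\,t}}$ (using $|\overline{V}^{\,t}|=|V|$) is a valid, slightly more elementary substitute for the $J$-self-adjointness argument of \Cref{lemma:residual}.

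The gap is in your final step. ``Essential-spectrum stability under relatively form-small perturbations'' is not a theorem: a relative form bound $a<1$ makes $-\Delta^\ast+V$ m-sectorial, but by itself it preserves neither the essential spectrum nor the inclusion $[0,\infty)\subset\sigma$. The paper must therefore prove two further statements that your appeal does not cover: (i) $\sigma_\textup{c}(-\Delta^\ast+V)\subset[0,\infty)$, obtained in \Cref{thm:absence-continuous} by showing (\Cref{lemma:singular-sequences}) that a singular sequence at $z\in\C\setminus\R$ would force $\langle\phi_n,K_z\phi_n\rangle/\|\phi_n\|^2\to-1$, contradicting $\|K_z\|\leq\mathfrak{a}<1$; and (ii) $[0,\infty)\subset\sigma(-\Delta^\ast+V)$, which in \Cref{thm:inclusion} requires constructing singular sequences for the \emph{perturbed} operator at every $z\in(0,\infty)$ by truncating a bounded plane-wave solution of $-\Delta^\ast u=zu$ produced through a diagonalization of the Lam\'e symbol (\Cref{lemma:diagonalization}) --- a genuinely Lam\'e-specific piece of work. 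For $V\in L^{3/2}(\R^3)$ your route could be salvaged by proving that $|V|^{1/2}(-\Delta^\ast+1)^{-1}|V|^{1/2}$ is compact (approximate $V$ in $L^{3/2}$ by bounded compactly supported potentials and use the operator-norm bound \eqref{BS-Lp-d=3}), then invoking stability of the essential spectrum under relatively form-\emph{compact} perturbations together with the connectedness of $\C\setminus[0,\infty)$ and the already-established absence of eigenvalues; but that compactness argument is neither stated nor proved in your proposal, ``form-small'' is not the right hypothesis for it, and it is exactly the kind of assumption the paper's method is built to avoid (it must also handle the non-compact perturbations of \Cref{thm:dim3}).
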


\begin{remark}
Observe that the smallness condition~\eqref{eq:cond-d3-Lp} ensures $\|K_z\|<1$
(\emph{cfr.}~\eqref{BS-Lp-d=3} in~\Cref{lemma_BS-estimates-d=3}).
Furthermore notice that it follows from H\"older inequality and Sobolev embedding that
\begin{equation}
  \label{eq:Sob-Hol}
  \int_{\R^3} |V||u|^2\, dx
  \leq \|V\|_{L^{3/2}(\R^3)} \|u\|_{L^6(\R^3)}^2
  \leq \frac{2^{4/3}}{3 \pi^{4/3}} \|V\|_{L^{3/2}(\R^3)} \int_{\R^3} |\nabla u|^2\, dx.
\end{equation} 
From~\eqref{eq:cond-d3-Lp} one has in particular that $2^{4/3}/(3\pi^{4/3}) \|V\|_{L^{3/2}(\R^3)}<\min\{\mu,\lambda+2\mu\},$ hence $-\Delta^\ast +V$ is well defined as an m-sectorial operator.
\end{remark}
\begin{remark}[Comparison between Corollaries~\ref{cor:absenceLp}--\ref{cor:KS} and Theorems~\ref{thm:dim3}--\ref{thm:L^p-d3}]
	Observe that both Corollaries~\ref{cor:absenceLp}--\ref{cor:KS} and Theorems~\ref{thm:dim3}--\ref{thm:L^p-d3} are providing with sufficient smallness-type conditions on the perturbation $V$ which ensure stability (in an appropriate sense) of the spectrum of the free Lamé operator. Notice that if on one hand Corollaries~\ref{cor:absenceLp}--\ref{cor:KS} seem more general as they are stated for any dimension $d\geq 3$ (whereas Theorems~\ref{thm:dim3}--\ref{thm:L^p-d3} are valid in $d=3$ only), on the other hand Theorems~\ref{thm:dim3}--\ref{thm:L^p-d3} give a more complete description of the spectrum of the perturbed Hamiltonian ensuring the full stability $\sigma(-\Delta^\ast + V)=[0,\infty)=\sigma(-\Delta^\ast)$ instead of stability of the sole point spectrum, \emph{i.e.} $\sigma_\textup{p}(-\Delta^\ast + V)=\varnothing=\sigma_\textup{p}(-\Delta^\ast),$ as in Corollaries~\ref{cor:absenceLp}--\ref{cor:KS}. Nevertheless, as far as the case $d=3$ is considered and if we focus on the point spectrum only, then Corollaries~\ref{cor:absenceLp}--\ref{cor:KS} and Theorems~\ref{thm:dim3}--\ref{thm:L^p-d3} equal one another, in the sense that they provide the \emph{same} smallness conditions on the potentials to guarantee the stated stability.  
\end{remark}

The rest of the paper is organized as follows: in \Cref{preliminaries}
we collect some preliminary results related to the Lamé operator which
will be used later in the paper. The proofs of
Theorems~\ref{thm:Lp-result}--\ref{thm:KS-result} are provided in
\Cref{sec:proofs}. \Cref{sec:proof-d3} is devoted to the proof of the
spectral stability valid in the three dimensional setting, namely
Theorems~\ref{thm:dim3}--\ref{thm:L^p-d3}. 
\subsection*{Notations}
\begin{itemize}
\item For $1\leq p<\infty$ and $u=(u_1,\dots,u_d) \in \C^d$ we denote
  $|u|_p:=(\sum_{j=1}^d |u_j|^p)^{1/p}$. Also, we will drop the
  subscript for $p=2$, writing $|u|:=|u|_2$.
\item For $u=(u_1, \dots, u_d) \in L^p(\R^d)^d$, we denote
  $\| u \|_{L^p(\R^d)^d} := \| |u(\cdot)|_p \|_{L^p(\R^d)}$
\item Let $V\colon \R^d \to \C^{d\times d}$; we define
\begin{equation*}
	|V(x)|_p:=\sup_{u\in \C^d} \frac{|V(x)u|_p}{|u|_p}, 
	\qquad \text{for a.a.}\; x\in \R^d,
\end{equation*}
and $\|V\|_{L^p(\R^d)^{d\times d}}:=\||V(\cdot)|_p\|_{L^p(\R^d)}.$
With a slight abuse of notation we consistently write
$\|V\|_{L^p(\R^d)}$ to indicate $\|V\|_{L^p(\R^d)^{d\times d}}.$
	\item As customarily, the notation $\langle \cdot, \cdot \rangle_{p p'}$ is used to denote the duality pairing $L^{p}\times L^{p'} \to \C,$ with $1/p+ 1/{p'}=1.$
\item Given a measurable function $w$, $L^p(wdx)$ stands for the
  $w$-weighted $L^p$ space on $\R^d$ with measure $w(x)dx.$
  	\item  Given a measurable non-negative function $w,$ we say that $w$ belongs to the $A_p(\R^d)$ Muckenhoupt class of weights, for $1<p<\infty$ if the following quantity
\begin{equation}\label{Q_p}
	Q_p(w)
	:=\sup_{Q} \Bigg ( \frac{1}{\abs{Q}} \int_Q w(x)\, dx \Bigg) \Bigg ( \frac{1}{\abs{Q}} \int_Q w(x)^{- \frac{1}{p-1}}\, dx \Bigg)^{p-1}
\end{equation}
is finite. Here the supremum is taken over any cube $Q$ in $\R^d.$
	\item Any given $u\in L^2(\R^d)^d$ can be decomposed as $u=u_S
          + u_P,$ where $u_S$ is a divergence-free vector field and
          $u_P$ is a gradient, see \Cref{lem:helmholtz}.          
 \item The Riesz transform $\mathcal{R}=(\mathcal{R}_1, \mathcal{R}_2,
   \dots, \mathcal{R}_d)$ is defined through the  Fourier transform by
	\begin{equation*}
		\widehat{R_j f}(\xi)=-i\frac{\xi_j}{|\xi|}\widehat{f}(\xi), 
		\qquad j=1,2,\dots, d, 
                \quad \text{ for all } f\in L^2(\R^d).
	\end{equation*}
\item Let $d=3$ and let $z\in \C \setminus [0,\infty).$ We denote by $\mathcal{G}_\zeta(x,y)$ the integral kernel associated to the resolvent of the Laplacian $(-\Delta-z)^{-1}.$ Its explicit expression is given 
	\begin{equation}\label{eq:integral-kernel}
		\mathcal{G}_\zeta(x,y):=\frac{1}{4\pi} \frac{e^{-\sqrt{-\zeta}|x-y|}}{|x-y|}.
	\end{equation} 	
	Here and in the sequel we choose the principal branch of the square root. 
\item We use the notations $C(\bullet), C_{\bullet}$ or $c(\bullet), c_\bullet$
  to emphasize the dependence of $C$ or $c$ on~$\bullet$. If not stated, these constants are in principle not explicit.

\end{itemize}

\subsection*{Acknowledgment}
L.C.~thanks O. Ibrogimov for manifesting an interest in the contents of the paper which strongly motivated this project. The authors are also grateful to D. Krej\v{c}i\v{r}\'ik and R. L. Frank for useful correspondences.

B.C.~is supported by Fondo Sociale Europeo – Programma
  Operativo Nazionale Ricerca e Innovazione 2014-2020,
  progetto PON: progetto AIM1892920-attivit\`a 2, linea 2.1.
The research of L.C.~is supported by the Deutsche Forschungsgemeinschaft (DFG) through CRC 1173.

\section{Preliminaries}\label{preliminaries}
In this section we collect some preliminary results on the Lamé
operator, referring to~\cite{Co19} and references therein for
more details.
We start recalling the Helmholtz decomposition of vector fields in
$L^2(\R^d)^d.$
\begin{lemma}[{\cite[Theorem 2.1, Lemma 2.2]{Co19}}]
\label{lem:helmholtz}
	Any vector field $u\in L^2(\R^d)^d$ can be uniquely split into its divergence-free (transversal) part $u_\textup{S}$ and its gradient-type (longitudinal) part $u_\textup{P}.$ More precisely, $u$ can be decomposed as follows
	\begin{equation*}
		u=u_\textup{S} + u_{\textup{P}},
	\end{equation*}
	with $u_\textup{P}=\nabla \varphi$ and $u_\textup{S}=u-u_\textup{P},$ where the scalar potential $\varphi$ satisfies $\Delta \varphi=\div u$ ensuring that $\div u_\textup{S}=0.$
        Furthermore the decomposition is $L^2$ orthogonal, that is 
	\begin{equation*}
		\|u\|_{L^2(\R^d)}^2=\|u_\textup{S}\|_{L^2(\R^d)}^2+\|u_\textup{P}\|_{L^2(\R^d)}^2.
	\end{equation*}
	Moreover 
	\begin{equation}\label{eq:Helmholtz-Riesz}
		(\pi_\textup{S}u)_j=u_{\textup{S},j}
		=u_j + \sum_{k=1}^d \mathcal{R}_j\mathcal{R}_k u_k,
		\qquad \text{and} \qquad
		(\pi_\textup{P}u)_j=u_{\textup{P},j}
		=- \sum_{k=1}^d \mathcal{R}_j\mathcal{R}_k u_k,
		\qquad j=1,2,\dots, d,
	\end{equation} 
        being $\mathcal{R}=(\mathcal{R}_1, \dots, \mathcal{R}_d)$ is
        the Riesz transform.
\end{lemma}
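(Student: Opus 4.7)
The plan is to carry out the decomposition on the Fourier side via matrix-valued multipliers, and then translate back to obtain the Riesz-transform formula. Consider the matrix symbols $\mathbb{P}(\xi) := I_d - \xi\xi^{T}/|\xi|^2$ and $\mathbb{Q}(\xi) := \xi\xi^{T}/|\xi|^2$, defined for $\xi \neq 0$. At each such $\xi$ these are Hermitian, complementary, idempotent matrices of norm one, so Plancherel's theorem makes the associated Fourier multipliers $\pi_{\textup{S}}$ and $\pi_{\textup{P}}$ into complementary orthogonal projections on $L^2(\R^d)^d$. Setting $u_{\textup{S}} := \pi_{\textup{S}} u$ and $u_{\textup{P}} := \pi_{\textup{P}} u$ therefore yields an $L^2$-orthogonal splitting $u = u_{\textup{S}} + u_{\textup{P}}$ with
\begin{equation*}
\|u\|_{L^2(\R^d)}^2 = \|u_{\textup{S}}\|_{L^2(\R^d)}^2 + \|u_{\textup{P}}\|_{L^2(\R^d)}^2.
\end{equation*}
Since $\widehat{\mathcal{R}_j \mathcal{R}_k f}(\xi) = -\xi_j\xi_k/|\xi|^2 \widehat{f}(\xi)$, the component formula $\widehat{(u_{\textup{P}})_j}(\xi) = \sum_{k} \xi_j\xi_k/|\xi|^2 \widehat{u_k}(\xi)$ translates into $(u_{\textup{P}})_j = -\sum_{k}\mathcal{R}_j\mathcal{R}_k u_k$, and the analogous identity for $u_{\textup{S}}$ follows from $\pi_{\textup{S}} = I - \pi_{\textup{P}}$, giving exactly \eqref{eq:Helmholtz-Riesz}.

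Next I would verify the structural properties. The identity $\sum_j \xi_j \mathbb{P}(\xi)_{jk} = 0$ gives $\widehat{\div u_{\textup{S}}}(\xi) = i \sum_j \xi_j \widehat{(u_{\textup{S}})_j}(\xi) = 0$, hence $\div u_{\textup{S}} = 0$. To produce the scalar potential, define $\varphi$ as the tempered distribution with Fourier transform $\widehat{\varphi}(\xi) := -i \, \xi\cdot \widehat{u}(\xi)/|\xi|^2$ (well-defined modulo constants on the complement of the origin, which suffices since only its gradient enters). Then
\begin{equation*}
\widehat{\partial_j \varphi}(\xi) = i\xi_j \widehat{\varphi}(\xi) = \sum_{k} \frac{\xi_j\xi_k}{|\xi|^2}\widehat{u_k}(\xi) = \widehat{(u_{\textup{P}})_j}(\xi),
\end{equation*}
so $u_{\textup{P}} = \nabla\varphi$ in $L^2$, while $\widehat{-\Delta \varphi}(\xi) = |\xi|^2 \widehat{\varphi}(\xi) = -i\, \xi \cdot \widehat{u}(\xi) = -\widehat{\div u}(\xi)$ yields $\Delta\varphi = \div u$.

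For uniqueness, suppose $u = v_{\textup{S}} + \nabla\psi_1 = w_{\textup{S}} + \nabla\psi_2$ with $v_{\textup{S}}, w_{\textup{S}} \in L^2$ divergence-free and $\nabla \psi_1, \nabla \psi_2 \in L^2$. Then $h := v_{\textup{S}} - w_{\textup{S}} = \nabla(\psi_2 - \psi_1)$ is simultaneously divergence-free and a gradient in $L^2$, so on the Fourier side $\widehat{h}(\xi)$ is parallel to $\xi$ and orthogonal to $\xi$ at almost every $\xi$, which forces $h = 0$. The main technical obstacle is the rigorous definition of $\varphi$ near $\xi = 0$, since the symbol $1/|\xi|^2$ is singular there; the cleanest way is to regard $\varphi$ as determined only up to additive constants and to confirm that its distributional gradient, computed via the multiplier $\xi_j\xi_k/|\xi|^2$ which is uniformly bounded, produces a genuine $L^2$ vector field coinciding with $u_{\textup{P}}$. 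Everything else reduces to Plancherel and linear-algebraic manipulations of the projection matrices $\mathbb{P}(\xi)$ and $\mathbb{Q}(\xi)$.
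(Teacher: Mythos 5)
Your proof is correct and follows the standard Fourier-multiplier (Leray projection) construction, which is exactly the argument behind the cited result in~\cite{Co19}: the paper does not reprove the lemma, and the formula~\eqref{eq:Helmholtz-Riesz} it records is precisely your symbol identity $\mathbb{Q}(\xi)=\xi\xi^{T}/|\xi|^2$ rewritten via $\widehat{\mathcal{R}_j\mathcal{R}_k f}=-\xi_j\xi_k|\xi|^{-2}\widehat{f}$. You also correctly flag and dispose of the only delicate point, the definition of $\varphi$ modulo constants near $\xi=0$, by working with the bounded multiplier for $\nabla\varphi=u_{\textup{P}}$ rather than with $\varphi$ itself.
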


The following two lemmas are easy consequence of the Helmholtz decomposition.
\begin{lemma}[{\cite[Lemma 2.2]{B_F-G_P-E_R_V}, \cite[Lemma 2.6]{Co19}}]
	Let $d\geq 2$ and let $f$ be a regular vector field sufficiently rapidly decaying at infinity. Then $-\Delta^\ast$ acts on $f=f_\textup{S}+f_\textup{P}$ as 
	\begin{equation}
	\label{eq:simple_writing}
		-\Delta^\ast f= -\mu \Delta f_\textup{S} -(\lambda +2 \mu) \Delta f_\textup{P},
	\end{equation}   
	where  $f_\textup{S}$ is a divergence free vector field and $f_\textup{P}$ a gradient.		
\end{lemma}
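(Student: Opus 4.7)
The plan is to proceed by direct computation, exploiting the defining properties of the Helmholtz decomposition established in \Cref{lem:helmholtz}. Starting from the definition of the Lam\'e operator,
\begin{equation*}
-\Delta^\ast f = -\mu \Delta f - (\lambda + \mu)\nabla \div f,
\end{equation*}
I would split $f = f_\textup{S} + f_\textup{P}$ and use linearity of $\Delta$ and $\nabla \div$ to write
\begin{equation*}
-\Delta^\ast f
= -\mu \Delta f_\textup{S} - \mu \Delta f_\textup{P} - (\lambda+\mu)\nabla \div f_\textup{S} - (\lambda+\mu)\nabla \div f_\textup{P}.
\end{equation*}

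Next, I would eliminate the two extra terms using the structural properties of the decomposition. Since $f_\textup{S}$ is divergence-free by \Cref{lem:helmholtz}, the term $(\lambda+\mu)\nabla \div f_\textup{S}$ vanishes identically. For the gradient-type part, I would use $f_\textup{P} = \nabla \varphi$: then $\div f_\textup{P} = \Delta \varphi$, and since partial derivatives commute on regular vector fields,
\begin{equation*}
\nabla \div f_\textup{P} = \nabla \Delta \varphi = \Delta \nabla \varphi = \Delta f_\textup{P}.
\end{equation*}
The regularity and decay assumption on $f$ guarantees that $\varphi$ is smooth enough to justify interchanging the derivatives.

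Combining the two observations yields
\begin{equation*}
-\Delta^\ast f = -\mu \Delta f_\textup{S} - \mu \Delta f_\textup{P} - (\lambda+\mu)\Delta f_\textup{P}
= -\mu \Delta f_\textup{S} - (\lambda + 2\mu)\Delta f_\textup{P},
\end{equation*}
which is the desired identity~\eqref{eq:simple_writing}. There is no real obstacle here; the only point requiring minor care is justifying the commutation of $\nabla$ and $\Delta$ on $f_\textup{P}$, which is immediate under the standing smoothness/decay hypothesis on $f$.
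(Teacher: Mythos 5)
Your computation is correct and is essentially the standard argument used in the cited references (the paper itself only cites \cite[Lemma 2.2]{B_F-G_P-E_R_V} and \cite[Lemma 2.6]{Co19} without reproducing the proof): split via Helmholtz, kill $\nabla\div f_\textup{S}$ since $f_\textup{S}$ is divergence-free, and use $f_\textup{P}=\nabla\varphi$ to identify $\nabla\div f_\textup{P}=\Delta f_\textup{P}$. No issues.
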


\begin{lemma}[{\cite[Lemma 2.2]{B_F-G_P-E_R_V}, \cite[Lemma 2.7]{Co19}}]
	Let $z\in \C\setminus [0, \infty)$ and $ g\in L^2(\R^d)^d.$ Then the identity 
	\begin{equation}
	\label{eq:resolvent}
		(-\Delta^\ast -z)^{-1} g= \frac{1}{\mu} \big(-\Delta - \tfrac{z}{\mu}\big)^{-1} g_\textup{S} + \frac{1}{\lambda + 2\mu} \big(-\Delta - \tfrac{z}{\lambda + 2\mu}\big)^{-1} g_\textup{P}
	\end{equation}
	holds true, where $g=g_\textup{S}+ g_\textup{P}$ is the Helmholtz decomposition of $g.$
\end{lemma}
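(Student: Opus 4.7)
The plan is to verify the identity by direct substitution, reducing everything to the Helmholtz decomposition and the scalar resolvent formula. Denote by $R_\mu(z):=(-\Delta - z/\mu)^{-1}$ and $R_{\lambda+2\mu}(z):=(-\Delta-z/(\lambda+2\mu))^{-1}$, and set
\begin{equation*}
u:=\frac{1}{\mu} R_\mu(z) g_\textup{S}+\frac{1}{\lambda+2\mu}R_{\lambda+2\mu}(z)g_\textup{P}.
\end{equation*}
The goal is to show $(-\Delta^\ast-z)u=g$; by the invertibility of $-\Delta^\ast-z$ on its domain (since $z \in \C\setminus[0,\infty)$ lies outside $\sigma(-\Delta^\ast)$) this will identify $u$ with $(-\Delta^\ast-z)^{-1}g$.

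The first step is to observe that the Helmholtz projections $\pi_\textup{S},\pi_\textup{P}$ from \Cref{lem:helmholtz} are given by Fourier multipliers (compositions of Riesz transforms), so they commute with the Laplacian and hence with its resolvents $R_\mu(z),R_{\lambda+2\mu}(z)$. In particular, $R_\mu(z)g_\textup{S}$ remains divergence--free and $R_{\lambda+2\mu}(z)g_\textup{P}$ remains a gradient. Consequently, the Helmholtz decomposition of $u$ is precisely
\begin{equation*}
u_\textup{S}=\tfrac{1}{\mu}R_\mu(z)g_\textup{S},\qquad u_\textup{P}=\tfrac{1}{\lambda+2\mu}R_{\lambda+2\mu}(z)g_\textup{P},
\end{equation*}
so formula~\eqref{eq:simple_writing} becomes directly applicable to $u$.

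Applying~\eqref{eq:simple_writing} and using the elementary resolvent identity $-\Delta\,(-\Delta-\alpha)^{-1}=I+\alpha(-\Delta-\alpha)^{-1}$ with $\alpha=z/\mu$ and $\alpha=z/(\lambda+2\mu)$ respectively, one computes
\begin{equation*}
-\Delta^\ast u=-\mu\Delta u_\textup{S}-(\lambda+2\mu)\Delta u_\textup{P}=-\Delta R_\mu(z)g_\textup{S}-\Delta R_{\lambda+2\mu}(z)g_\textup{P}=g_\textup{S}+\tfrac{z}{\mu}R_\mu(z)g_\textup{S}+g_\textup{P}+\tfrac{z}{\lambda+2\mu}R_{\lambda+2\mu}(z)g_\textup{P}=g+zu,
\end{equation*}
which rearranges to $(-\Delta^\ast-z)u=g$, proving~\eqref{eq:resolvent}.

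There is no real obstacle here: the only point requiring care is the commutation of the Laplacian resolvent with the Helmholtz projectors, which needs the Fourier--multiplier characterization~\eqref{eq:Helmholtz-Riesz} of $\pi_\textup{S}$ and $\pi_\textup{P}$. Once this is in hand, the remaining manipulations are algebraic and follow from~\eqref{eq:simple_writing} together with the scalar resolvent identity.
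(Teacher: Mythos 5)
Your proof is correct and follows essentially the same route as the cited references: one checks that the candidate resolvent diagonalizes along the Helmholtz decomposition, using that the projectors $\pi_\textup{S},\pi_\textup{P}$ are Fourier multipliers commuting with $(-\Delta-\alpha)^{-1}$, and then applies~\eqref{eq:simple_writing} together with the scalar identity $-\Delta(-\Delta-\alpha)^{-1}=I+\alpha(-\Delta-\alpha)^{-1}$. The only points worth making explicit are that $\mu>0$ and $\lambda+2\mu>0$ guarantee $z/\mu,\,z/(\lambda+2\mu)\in\C\setminus[0,\infty)$ so both scalar resolvents exist, and that $u\in H^2(\R^d)^d$ so that $(-\Delta^\ast-z)u=g$ indeed identifies $u$ with $(-\Delta^\ast-z)^{-1}g$.
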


Thanks to the representation~\eqref{eq:simple_writing} of
$-\Delta^\ast$ in terms of Laplace operators,
the quadratic form $h_0$ associated with $-\Delta^\ast$ has the following expression:
\begin{equation}\label{eq:h_0}
	h_0[u]:=\mu \int_{\R^d} |\nabla u_\textup{S}|^2\,dx + (\lambda + 2\mu) \int_{\R^d} |\nabla u_\textup{P}|^2\,dx,
	\qquad \mathcal{D}(h_0):=H^1(\R^d)^d.
\end{equation}
Let $V\colon \R^d \to \C^{d\times d}$ be a measurable matrix-valued function such that
\begin{equation*}
	\exists\, a<\min\{\mu, \lambda+ 2\mu\}
	\quad \text{such that}	
	\quad \int_{\R^d} |V||u|^2\, dx\leq a \int_{\R^d} |\nabla u|^2\, dx,
	\qquad \forall\, u\in H^1(\R^d),
\end{equation*}
 thus the quadratic form
\begin{equation}\label{eq:v}
	v[u]:=\int_{\R^d} \overline{Vu}\cdot u\, dx,
	\qquad \mathcal{D}(v):=\Big \{u\in L^2(\R^d)^d\colon \int_{\R^d} |V||u|^2\, dx<\infty\Big \}
\end{equation}
is relatively bounded with respect to $h_0$ with relative bound less than one. As a consequence, the sum $h_V:=h_0+v$ is a closed form with $\mathcal{D}(h_V)=H^1(\R^d)^d$ which gives rise to an m-sectorial operator in $L^2(\R^d)^d$ via the representation theorem (\emph{cf}.~\cite[Thm. VI.2.1]{Kato}).

In the following lemma we gather some boundedness results for the Riesz transform.
\begin{lemma}
  \label{boundedness-Riesz-transform}
  Let $1<p<\infty$ and $p'$ such that $1/p+1/p'=1$ and let $w
  \in A_p(\R^d)$. Then, for any $j=1,2,\dots, d,$
  the following bounds on the operator norms of the Riesz transform $\mathcal{R}_j$ hold true:
  \begin{equation}\label{Riesz_1}
    \norm{\mathcal{R}_j}_{L^p(\R^d) \to L^p(\R^d)}
    =c_p:=\cot\Big(\frac{\pi}{2 \max\{p,p'\}} \Big), 
  \end{equation}
  \begin{equation}\label{Riesz_2}
    \norm{\mathcal{R}_j}_{L^p(w dx)\to L^p(w dx)}\leq c_w:= C \, Q_p(w)^{\max\{1, p'/p\}}, 
  \end{equation}
  for some $C>0$ independent on $w$ and for $Q_p(w)$ defined as in \eqref{Q_p}.
  \begin{proof}
    For the proof of~\eqref{Riesz_1} refer to~\cite{B_W} (see also~\cite{Ca_Zy}); inequality~\eqref{Riesz_2} is proved in~\cite{Petermichl} (see also~\cite{C_F}).
        \end{proof}
	\end{lemma}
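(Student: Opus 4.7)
The plan is to handle the two bounds separately, as each relies on a different circle of techniques from harmonic analysis; both statements are by now classical and I would essentially follow the strategies originally used in the cited papers.

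For the sharp unweighted bound \eqref{Riesz_1}, I would proceed via the method of rotations. Writing $\mathcal{R}_j$ as a principal-value integral with kernel proportional to $x_j/\abs{x}^{d+1}$, one can represent $\mathcal{R}_j$ as an average of one-dimensional directional Hilbert transforms $H_\theta$ along lines in the direction $\theta \in S^{d-1}$, weighted by $\theta_j$. By Minkowski's inequality and Fubini, together with the translation invariance of $H_\theta$, this reduces matters to Pichorides' sharp constant for the one-dimensional Hilbert transform, which equals $\cot(\pi/(2\max\{p,p'\}))$. This gives the upper bound immediately. The matching lower bound, and therefore equality, is the delicate point: the method of rotations typically loses a constant. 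To recover equality I would follow the stochastic-integral / martingale-transform approach of Ba\~{n}uelos--Wang (and independently Iwaniec--Martin), which represents $\mathcal{R}_j$ via the heat semigroup acting on a Brownian motion and then transfers Burkholder-type sharp martingale inequalities to the Riesz transform with no loss of constant.

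For the weighted bound \eqref{Riesz_2}, I would first settle the case $p=2$, which is exactly the sharp $A_2$ theorem for Riesz transforms due to Petermichl: $\norm{\mathcal{R}_j}_{L^2(wdx) \to L^2(wdx)} \leq C\, Q_2(w)$. The proof decomposes $\mathcal{R}_j$ as a convex average (over dyadic grids and dilations) of dyadic shift operators, and then controls each shift by $Q_2(w)$ through a Bellman function argument (or, in the modern formulation, through sparse domination by positive dyadic operators combined with a two-weight testing condition). To pass from $p=2$ to general $1 < p < \infty$ with the sharp exponent $\max\{1, p'/p\}$, I would invoke the sharp version of the Rubio de Francia extrapolation theorem due to Dragi\v{c}evi\'{c}--Grafakos--Pereyra--Petermichl, which upgrades linear $A_2$ bounds into $A_p$ bounds with exactly this exponent.

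The main obstacle in both parts is sharpness of the exponents. In \eqref{Riesz_1} the obstacle is obtaining equality rather than a one-sided bound, which forces one out of real-variable techniques and into the probabilistic framework of Ba\~{n}uelos--Wang. In \eqref{Riesz_2} the obstacle is reaching the exponent $\max\{1, p'/p\}$ rather than the weaker exponent coming from naive interpolation of $L^p(wdx)$ bounds; this is what makes the sharp $A_2$ theorem (and the sharp extrapolation built on it) unavoidable as the starting point.
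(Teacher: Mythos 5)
Your proposal is correct and matches the paper exactly: the paper proves this lemma purely by citation, referring \eqref{Riesz_1} to Ba\~nuelos--Wang and \eqref{Riesz_2} to Petermichl (together with sharp extrapolation), which are precisely the arguments you sketch. One small attribution slip: the martingale/stochastic-integral method of Ba\~nuelos--Wang (like the method of rotations, which for the Riesz transforms is in fact lossless because of the exact normalization of the kernel) yields only the \emph{upper} bound $\cot(\pi/(2\max\{p,p'\}))$, while the matching lower bound giving equality in \eqref{Riesz_1} comes from transference, i.e.\ testing $\mathcal{R}_j$ on functions depending essentially on one variable so that it degenerates to the one-dimensional Hilbert transform, whose norm is Pichorides' constant.
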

Thanks to \Cref{lem:helmholtz} and \Cref{boundedness-Riesz-transform}, one shows almost-orthogonality of the $S$ and $P$ components in the Helmholtz decomposition.
\begin{lemma}\label{lemma:orthogonality}
	Let $g=g_\textup{S} + g_\textup{P}$ be the Helmholtz decomposition of $g.$ For $1<p<\infty$ the following estimates hold true:
	\begin{align}\label{eq:orthogonality}
		 \|g_\textup{S}\|_{L^p(\R^d)^d} + \|g_\textup{P}\|_{L^p(\R^d)^d}
		&  \leq (1+2 d c_p^2)\|g\|_{L^p(\R^d)^d},
          \\
          \label{eq:orthogonality-2}
		 \|g_\textup{S}\|_{L^2(w dx)^d} + \|g_\textup{P}\|_{L^2(wdx)^d}
		& \leq (1+2 d c_w^2)\|g\|_{L^2(wdx)^d},
	\end{align}
	with $c_p,c_w>0$ defined in
        \Cref{boundedness-Riesz-transform}. 
\end{lemma}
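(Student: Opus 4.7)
The proof will follow directly from the explicit representation of $g_{\textup{S}}$ and $g_{\textup{P}}$ in terms of iterated Riesz transforms furnished by \Cref{lem:helmholtz}, namely
\begin{equation*}
  (\pi_{\textup{S}} g)_j = g_j + \sum_{k=1}^d \mathcal{R}_j \mathcal{R}_k g_k,
  \qquad
  (\pi_{\textup{P}} g)_j = -\sum_{k=1}^d \mathcal{R}_j \mathcal{R}_k g_k,
\end{equation*}
combined with the componentwise $L^p$- and weighted-$L^2$-boundedness of the Riesz transform recalled in \Cref{boundedness-Riesz-transform}. The overall strategy is the same for both inequalities; only the applicable Riesz bound ($c_p$ or $c_w$) differs.

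Concretely, for \eqref{eq:orthogonality} the plan is as follows. First, apply the triangle inequality in $L^p(\R^d)$ to each scalar component $(\pi_{\textup{P}}g)_j$ and invoke \eqref{Riesz_1} twice to obtain $\|g_{\textup{P},j}\|_{L^p(\R^d)} \leq c_p^2 \sum_{k=1}^d \|g_k\|_{L^p(\R^d)}$. Next, assemble these scalar bounds into the vector-valued norm by using Minkowski's inequality in $\ell^p$ over the index $j$, and then convert $\sum_k \|g_k\|_{L^p(\R^d)}$ into $\|g\|_{L^p(\R^d)^d}$ via Hölder's inequality, which produces a factor $d^{1/p'}$. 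Combining the factor $d^{1/p}$ (from summing $d$ identical terms in $j$) with $d^{1/p'}$ and using $1/p+1/p'=1$ gives exactly a factor of $d$, so that
\begin{equation*}
  \|g_{\textup{P}}\|_{L^p(\R^d)^d} \leq d\, c_p^2 \|g\|_{L^p(\R^d)^d}.
\end{equation*}
The analogous computation for $g_{\textup{S}}$ starts from $\|g_{\textup{S},j}\|_{L^p} \leq \|g_j\|_{L^p} + c_p^2 \sum_k \|g_k\|_{L^p}$ and, assembling with Minkowski in $\ell^p$, yields $\|g_{\textup{S}}\|_{L^p(\R^d)^d} \leq (1+d\,c_p^2)\|g\|_{L^p(\R^d)^d}$. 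Summing the two bounds produces the constant $1+2d\,c_p^2$ claimed in \eqref{eq:orthogonality}.

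For \eqref{eq:orthogonality-2} the argument is literally the same with $p=2$ and Cauchy--Schwarz in place of Hölder, using \eqref{Riesz_2} in the weighted space $L^2(w\,dx)$ to pick up the constant $c_w$ instead of $c_p$. Since the underlying weight $w$ does not affect the combinatorial factors arising from summing over the $d$ components, the constant that emerges is again $1+2d\,c_w^2$.

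The only small point requiring attention is the bookkeeping of the $d$-dependent combinatorial factors: one must apply Minkowski/triangle inequality at the level of $\ell^p$ norms on the component index $j$ and use Hölder correctly to pass from $\sum_k \|g_k\|_{L^p}$ to $\|g\|_{L^p(\R^d)^d}$, noting the cancellation $d^{1/p} \cdot d^{1/p'} = d$. Apart from this, the proof is a direct application of the scalar boundedness results for $\mathcal{R}_j$.
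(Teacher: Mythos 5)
Your proposal is correct and follows essentially the same route as the paper: both rest on the Riesz-transform representation \eqref{eq:Helmholtz-Riesz}, two applications of the scalar bounds \eqref{Riesz_1}/\eqref{Riesz_2}, Minkowski's inequality over the component index, and the discrete H\"older step producing $d^{1/p}\cdot d^{1/p'}=d$; the only cosmetic difference is that you bound each component $g_{\textup{S},j}$, $g_{\textup{P},j}$ first and then assemble, while the paper estimates the vector quantity $\sum_k\mathcal{R}\mathcal{R}_k g_k$ directly. The constants obtained agree with \eqref{eq:orthogonality} and \eqref{eq:orthogonality-2}.
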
 
\begin{proof}
  We prove only~\eqref{eq:orthogonality}, the proof
  of~\eqref{eq:orthogonality-2} is similar.
  Let $g \in L^p(\R^d)^d$: from~\eqref{eq:Helmholtz-Riesz} one has
  \begin{equation*}
  	\|g_\textup{S}\|_{L^p(\R^d)^d} + \|g_\textup{P}\|_{L^p(\R^d)^d}
  	\leq \|g\|_{L^p(\R^d)^d} + 2 \|\textstyle{\sum_{k=1}^d} \mathcal{R}\mathcal{R}_kg_k\|_{L^p(\R^d)^d}.
  \end{equation*} 
  Using~\eqref{Riesz_1} one gets
  \begin{equation*}
  	\begin{split}
  		\|\textstyle{\sum_{k=1}^d} \mathcal{R}\mathcal{R}_kg_k\|_{L^p(\R^d)^d}
  		&\leq \textstyle{\sum_{k=1}^d}
                (\textstyle{\sum_{j=1}^d} \|\mathcal{R}_j
                \mathcal{R}_kg_k\|_{L^p(\R^d)}^p)^\frac{1}{p}\\
  		&\leq d^\frac{1}{p} c_p^2  \textstyle{\sum_{k=1}^d} \|g_k\|_{L^p(\R^d)}\\
  	&\leq d c_p^2 \|g\|_{L^p(\R^d)^d},
	\end{split}  
  \end{equation*} 
  where in the last inequality we have used the Hölder inequality for discrete measure. Gathering the two previous bounds gives~\eqref{eq:orthogonality}. 
\end{proof}

The following result shows the good behavior of the Morrey-Campanato space in relation with the Muchenhoupt class of weights.
\begin{lemma}[{\cite[Lemma 1]{Ch_Fr}}]\label{lemma:MC-M}
  Let $0<\alpha<d$, $1<p\leq d/\alpha$ and let $V \in
  \mathcal{L}^{\alpha, p}(\R^d)$, $V\geq 0$.
  If $p_1 \in (1,p),$ then $W=(MV^{p_1})^{1/p_1} \in A_1(\R^d)\cap
  \mathcal{L}^{\alpha,p}(\R^d),$ where $M$ denotes the usual
  Hardy-Littlewood maximal operator, and $V(x)\leq W(x)$ for almost all $x\in \R^d.$
  Moreover, there exists a constant $C>0$ independent on $V,$ such the $A_1$ constant for $W$ is less than $C$ and 
	\begin{equation}\label{eq:W-V}
		\|W\|_{\mathcal{L}^{\alpha,p}(\R^d)}\leq C\|V\|_{\mathcal{L}^{\alpha,p}(\R^d)}.
	\end{equation}
\end{lemma}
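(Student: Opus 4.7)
The statement contains three assertions to establish for the weight $W = (MV^{p_1})^{1/p_1}$: the pointwise domination $V \leq W$, membership in $A_1(\R^d)$ with a constant bounded independently of $V$, and the Morrey-Campanato norm control. The first is immediate from the Lebesgue differentiation theorem: since $V^{p_1}$ is locally integrable (the Morrey-Campanato hypothesis ensures $V \in L^p_{\mathrm{loc}}$ and hence $V^{p_1} \in L^{p/p_1}_{\mathrm{loc}} \subset L^1_{\mathrm{loc}}$), one has $V^{p_1} \leq M V^{p_1}$ a.e., and raising to the power $1/p_1$ gives $V \leq W$. For the $A_1$ assertion I would invoke the classical Coifman--Rochberg theorem: for any locally integrable $f \geq 0$ with $Mf < \infty$ a.e.\ and any $0 < \delta < 1$, the weight $(Mf)^\delta$ lies in $A_1$ with a constant depending only on $\delta$ and $d$. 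Applied to $f = V^{p_1}$ with $\delta = 1/p_1 < 1$, this gives $W \in A_1$ with a bound independent of $V$.

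The heart of the argument is the Morrey-Campanato bound, which I would prove by the standard near/far decomposition. Fix $B = B_r(x)$ and write $V = V_1 + V_2$ with $V_1 = V \chi_{2B}$ and $V_2 = V \chi_{(2B)^c}$, so that $M(V^{p_1}) \leq M(V_1^{p_1}) + M(V_2^{p_1})$. Setting $q := p/p_1 > 1$, the local contribution is controlled via the strong $(q,q)$ bound for the maximal operator,
$$\int_B M(V_1^{p_1})^{q}\, dy \leq c \int_{2B} V^{p}\, dy \leq c\, (2r)^{d-\alpha p}\, \|V\|_{\mathcal{L}^{\alpha,p}(\R^d)}^p.$$
For the tail I would observe that if $y \in B$, only radii $s > r$ contribute to $M(V_2^{p_1})(y)$, and such balls satisfy $B_s(y) \subset B_{2s}(x)$. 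Applying H\"older's inequality with exponent $q = p/p_1$ together with the Morrey-Campanato bound on $B_{2s}(x)$ yields the pointwise estimate $M(V_2^{p_1})(y) \leq C\, r^{-\alpha p_1}\,\|V\|_{\mathcal{L}^{\alpha,p}(\R^d)}^{p_1}$, uniformly in $y \in B$. Summing the two contributions and raising to the power $q$ produces $\int_B W^p\, dy \leq C\, r^{d - \alpha p}\|V\|_{\mathcal{L}^{\alpha,p}(\R^d)}^p$, which after multiplication by $r^{\alpha p - d}$ and extraction of the $p$-th root is exactly \eqref{eq:W-V}.

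The main technical point is the bookkeeping of exponents in the tail estimate: one must verify that the power of $s$ emerging after the H\"older step collapses precisely to $s^{-\alpha p_1}$, so that the supremum over $s > r$ produces the scaling $r^{-\alpha p_1}$ consistent with $W^p$ having Morrey-Campanato order $\alpha p$. The condition $p_1 < p$ is crucial here for two reasons appearing in the same calculation: it makes $q = p/p_1$ strictly greater than one (so the maximal operator is bounded on $L^q$, needed for the local piece), and it allows the use of H\"older's inequality in the tail step to lift an $L^{p_1}$ integral up to the $L^p$ scale on which the Morrey-Campanato norm is formulated.
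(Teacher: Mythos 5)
Your proposal is correct. The paper itself gives no proof of this lemma, only the citation to Chiarenza--Frasca, and your argument (pointwise domination via Lebesgue differentiation, the Coifman--Rochberg theorem for the $A_1$ bound with constant depending only on $1/p_1$ and $d$, and the near/far decomposition with the maximal function's strong $(q,q)$ bound for the local piece and H\"older plus the Morrey--Campanato hypothesis for the tail) is precisely the standard proof found in that reference; the exponent bookkeeping in the tail indeed collapses to $s^{-\alpha p_1}$ as you claim.
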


In the next lemma we list uniform estimates for the
operator norm of the resolvent $(-\Delta^\ast -z)^{-1},$ $z\in
\C\setminus [0,\infty).$ 
\begin{lemma}
  \label{lemma:uniform-estimate}	
  Let $z\in \C\setminus [0,\infty).$ Then the following estimates for the resolvent $(-\Delta^\ast-z)^{-1}$ hold true.
  \begin{enumerate}[i)]
  \item Let $1<p\leq 6/5$ if $d=2,$ $2d/(d+2)\leq p \leq
    2(d+1)/(d+3)$ if $d\geq 3$ and let $p'$ such that
    $1/p + 1/p'=1.$ Then there exists a universal constant
    $c_{p,d,\lambda, \mu}>0$ such that 
    \begin{equation}\label{Lame_res_1}
      \norm{(-\Delta^\ast-z)^{-1}}_{L^p(\R^d)\to L^{p'}(\R^d)}
      \leq c_{p,d,\lambda, \mu}\abs{z}^{-\frac{d+2}{2} + \frac{d}{p}}.
    \end{equation}
  \item Let $3/2< \alpha < 2$ if $d=2,$ $2d/(d+1)<\alpha \leq 2$ if
    $d\geq 3$ and let $(d-1)/2(\alpha-1)< p \leq d/\alpha.$ Then there
    exists a universal constant $c_{\alpha, p,d,\lambda, \mu}>0$ such
    that for any non-negative function $V$ in $\mathcal{L}^{\alpha, p}(\R^d)$
    \begin{equation}\label{Lame_res_2}
      \norm{(-\Delta^\ast-z)^{-1}}_{L^2(V^{-1}dx)\to L^{2}(Vdx)}
      \leq c_{\alpha, p,d,\lambda, \mu} \abs{z}^{-1+ \frac{\alpha}{2}}\norm{V}_{\mathcal{L}^{\alpha, p}(\R^d)}.
    \end{equation}
  \item Let $3/2\leq \alpha < 2$ if $d=2,$ $d-1\leq \alpha < d$ if
    $d\geq 3$ and let $\beta=(2\alpha -d+1)/2.$
    Then there exists a universal constant $c_{\alpha,
      d,\lambda,\mu}>0$ such that for any non-negative function $V$ such that $\abs{V}^\beta\in \mathcal{KS}_{\alpha}(\R^d)$
    \begin{equation}\label{Lame_res_3}
      \norm{(-\Delta^\ast-z)^{-1}}_{L^2(V^{-1}dx)\to
        L^{2}(Vdx)}
      \leq  c_{\alpha, d,\lambda,\mu} \, Q_2(V)^2
    \abs{z}^{-\frac{\alpha - d+1}{2\alpha -d+1}}
    \norm{\abs{V}^\beta}_{\mathcal{KS}_{\alpha}(\R^d)}^\frac{1}{\beta}. 
  \end{equation}
\end{enumerate}
\begin{proof}
  For a proof refer to~\cite[Thm.~2.3]{Co19} (see also~\cite[Thm.~1.1]{B_F-G_P-E_R_V}).
  These estimates are consequence of the corresponding bounds for the
  resolvent of the free Schr\"odinger operator $(-\Delta-z)^{-1}$
  (\emph{cfr.}~\cite[Thm.~2.2]{Co19}
  and~\cite[Thm.~3.8]{B_F-G_P-E_R_V} for the collection of statements
  and references therein for the explicit proof), after using the explicit representation~\eqref{eq:resolvent} and the boundedness properties of the Riesz transform~\ref{boundedness-Riesz-transform}.	\end{proof}
\end{lemma}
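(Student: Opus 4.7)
The plan is to derive all three bounds by combining the explicit splitting~\eqref{eq:resolvent} of $(-\Delta^\ast-z)^{-1}$ with the corresponding known resolvent estimates for $-\Delta$ and the almost-orthogonality of the Helmholtz decomposition provided by Lemma~\ref{lemma:orthogonality}. Given a datum $g$, I would first write $g=g_\textup{S}+g_\textup{P}$ and apply~\eqref{eq:resolvent} to express $(-\Delta^\ast-z)^{-1}g$ as a linear combination of $(-\Delta-z/\mu)^{-1}g_\textup{S}$ and $(-\Delta-z/(\lambda+2\mu))^{-1}g_\textup{P}$. By the triangle inequality the target norm is controlled by a sum of two Schr\"odinger-type pieces, and since both $|z/\mu|$ and $|z/(\lambda+2\mu)|$ are comparable to $|z|$ up to $\lambda,\mu$-dependent constants, the desired power of $|z|$ factors out uniformly.

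For part (i), I would then invoke the Kenig--Ruiz--Sogge uniform resolvent bound $\|(-\Delta-w)^{-1}\|_{L^p\to L^{p'}}\leq C|w|^{-\frac{d+2}{2}+\frac{d}{p}}$ in the stated range, and recombine the two pieces via~\eqref{eq:orthogonality}, which controls $\|g_\textup{S}\|_{L^p}+\|g_\textup{P}\|_{L^p}$ by $(1+2dc_p^2)\|g\|_{L^p}$. For part (iii), the analogous Kerman--Sawyer-type weighted bound for the Schr\"odinger resolvent plays the role of Kenig--Ruiz--Sogge, and one applies the weighted orthogonality~\eqref{eq:orthogonality-2} with weight $|V|^{-1}$: because $|V|\in A_2$ is equivalent to $|V|^{-1}\in A_2$ with the same Muckenhoupt constant $Q_2(|V|)$, the almost-orthogonality factor reduces to a universal multiple of $Q_2(|V|)^2$, matching the constant appearing in~\eqref{Lame_res_3}.

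The most delicate step, and the one I expect to be the main obstacle, is part (ii), since no $A_2$ hypothesis on $V$ is available and so Lemma~\ref{lemma:orthogonality} cannot be applied with weight $V^{-1}$ directly. To circumvent this I would appeal to Lemma~\ref{lemma:MC-M}, producing a pointwise majorant $W\in A_1\cap\mathcal{L}^{\alpha,p}$ of $V$ with $\|W\|_{\mathcal{L}^{\alpha,p}}\leq C\|V\|_{\mathcal{L}^{\alpha,p}}$ and universal $A_1$-constant. Since $V\leq W$ gives $V^{-1}\geq W^{-1}$, the Schr\"odinger Morrey--Campanato weighted bound yields
\begin{equation*}
\|(-\Delta-w)^{-1}g_\#\|_{L^2(Vdx)}\leq\|(-\Delta-w)^{-1}g_\#\|_{L^2(Wdx)}\leq C|w|^{-1+\frac{\alpha}{2}}\|W\|_{\mathcal{L}^{\alpha,p}}\|g_\#\|_{L^2(W^{-1}dx)},
\end{equation*}
and since $W^{-1}\in A_2$ with universal $Q_2$-constant, the orthogonality estimate~\eqref{eq:orthogonality-2} bounds $\|g_\textup{S}\|_{L^2(W^{-1}dx)}+\|g_\textup{P}\|_{L^2(W^{-1}dx)}$ by a universal multiple of $\|g\|_{L^2(W^{-1}dx)}\leq \|g\|_{L^2(V^{-1}dx)}$. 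Gathering everything yields~\eqref{Lame_res_2} with a constant independent of $V$.
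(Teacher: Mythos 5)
Your proposal is correct and follows essentially the same route as the paper, which proves \Cref{lemma:uniform-estimate} by citing \cite[Thm.~2.3]{Co19}: split via the resolvent representation~\eqref{eq:resolvent}, invoke the corresponding Schr\"odinger resolvent bounds on each Helmholtz component, and recombine using the Riesz-transform boundedness (\emph{i.e.}~\Cref{lemma:orthogonality}). Your treatment of part (ii) via the Chiarenza--Frasca majorant of Lemma~\ref{lemma:MC-M} is exactly the device the paper itself employs in the proof of~\eqref{BS-MC-d=3}, so no genuinely different idea is introduced.
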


From \Cref{lemma:uniform-estimate}, the following estimates descend for the
Birman-Schwinger operator associated to the Lamé operator.
\begin{lemma}\label{lemma_BS-estimates}
  Let $z\in \C \setminus [0,\infty),$ $0<\gamma\leq 1/2$ if $d=2$ and $0\leq \gamma\leq 1/2$ if $d\geq 3.$  Then the following estimate for the $L^2-L^2$ operator norm of the Birman-Schwinger operator $K_z:=|V|^{1/2}(-\Delta^\ast-z)^{-1}V_{1/2}$ hold true:
		\begin{equation}
			\label{BS-Lp}
				\|K_z\|_{L^2(\R^d)\to L^{2}(\R^d)}\leq c_{\gamma, d,\lambda, \mu}\abs{z}^{-\frac{2\gamma}{2\gamma +d}} \|V\|_{L^{\gamma + \frac{d}{2}}(\R^d)}.
		\end{equation}
                For $p$ and $\alpha$ as in \Cref{thm:MC-result} one has
		\begin{equation}
			\label{BS-MC}
				\|K_z\|_{L^2(\R^d)\to L^{2}(\R^d)}\leq c_{\gamma, p, d,\lambda, \mu}\abs{z}^{-\frac{2\gamma}{2\gamma +d}}
			\|V\|_{\mathcal{L}^{\alpha, p}(\R^d)}.
		\end{equation}
		If, in addition, $V\in A_2(\R^d),$ then for $\alpha$ and $\beta$ as in \Cref{thm:KS-result} one has
		\begin{equation}\label{BS-KS}
                  \|K_z\|_{L^2(\R^d)\to L^{2}(\R^d)}
                  \leq c_{\gamma, d,\lambda, \mu}\, Q_2(|V|)^2
                  \abs{z}^{-\frac{2\gamma}{2\gamma +d}}
                  \| |V|^\beta\|_{\mathcal{KS}_{\alpha}(\R^d)}^\frac{1}{\beta}.
		\end{equation}
\end{lemma}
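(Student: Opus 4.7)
The strategy is to combine the three uniform resolvent estimates of \Cref{lemma:uniform-estimate} with an appropriate multiplication-operator estimate that turns a $z$-weighted mapping property of $(-\Delta^\ast - z)^{-1}$ into an $L^2 \to L^2$ bound for $K_z$.

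For \eqref{BS-Lp}, I would choose $p\in (1,6/5]$ (resp.\ $p\in[2d/(d+2),2(d+1)/(d+3)]$) so that the exponent $r := 2p/(2-p)$ satisfies $r/2 = \gamma + d/2$, i.e.\ $1/p = 1/2 + 1/(2\gamma+d)$. One checks this value lies in the admissible range of \eqref{Lame_res_1} for the prescribed range of $\gamma$. Then H\"older's inequality yields
\begin{equation*}
\| V_{1/2} f\|_{L^p(\R^d)^d} \leq \| |V|^{1/2}\|_{L^r(\R^d)} \|f\|_{L^2(\R^d)^d}, \qquad \| |V|^{1/2} g\|_{L^2(\R^d)^d} \leq \| |V|^{1/2}\|_{L^r(\R^d)} \|g\|_{L^{p'}(\R^d)^d},
\end{equation*}
since $1/p' = 1/2 - 1/r$. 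Composing with \eqref{Lame_res_1} and using $\||V|^{1/2}\|_{L^r}^2 = \|V\|_{L^{r/2}} = \|V\|_{L^{\gamma+d/2}}$ gives
\begin{equation*}
\|K_z\|_{L^2\to L^2} \leq c_{p,d,\lambda,\mu}\, |z|^{-\frac{d+2}{2}+\frac{d}{p}}\, \|V\|_{L^{\gamma+d/2}(\R^d)},
\end{equation*}
and a direct computation with the chosen $p$ yields the power $-2\gamma/(2\gamma+d)$ on $|z|$, proving \eqref{BS-Lp}.

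For \eqref{BS-MC} and \eqref{BS-KS}, I would exploit that the corresponding resolvent bounds \eqref{Lame_res_2}--\eqref{Lame_res_3} are already formulated with the weights $|V|^{\pm 1}$. Observe that for any $f\in L^2(\R^d)^d$, since $|V_{1/2} f| \leq |V|^{1/2} |f|$ pointwise,
\begin{equation*}
\|V_{1/2} f\|_{L^2(|V|^{-1}dx)} \leq \|f\|_{L^2(\R^d)^d}, \qquad \| |V|^{1/2} g\|_{L^2(\R^d)^d} = \|g\|_{L^2(|V|dx)}.
\end{equation*}
Writing $K_z f = |V|^{1/2}\bigl[(-\Delta^\ast - z)^{-1} V_{1/2} f\bigr]$, applying the first identity and then \eqref{Lame_res_2} (respectively \eqref{Lame_res_3}) with the weight $|V|$, one immediately obtains
\begin{equation*}
\|K_z\|_{L^2\to L^2} \leq c_{\alpha,p,d,\lambda,\mu}\, |z|^{-1+\alpha/2}\, \|V\|_{\mathcal{L}^{\alpha,p}(\R^d)}, \qquad \text{and resp. for } \mathcal{KS}_\alpha.
\end{equation*}
With the choice $\alpha = 2d/(2\gamma + d)$ one verifies $-1 + \alpha/2 = -2\gamma/(2\gamma+d)$, giving \eqref{BS-MC}; for the Kerman-Sawyer case the analogous algebraic identity with $\alpha = 2d\beta/(2\gamma+d)$ and $\beta = (d+2\gamma)(d-1)/[2(d-2\gamma)]$ yields $-(\alpha-d+1)/(2\alpha-d+1) = -2\gamma/(2\gamma+d)$, producing \eqref{BS-KS}.

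The computational obstacle is essentially bookkeeping: checking that the admissibility ranges of $p$, $\alpha$, $\beta$ in \Cref{lemma:uniform-estimate} are compatible with the constraints on $\gamma$ stated in the lemma, and verifying the algebraic identities that convert the various $|z|$-exponents into the common value $-2\gamma/(2\gamma+d)$. No deeper analytic input is required beyond the resolvent bounds of \Cref{lemma:uniform-estimate} and H\"older's inequality.
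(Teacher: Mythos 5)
Your argument is correct and is precisely the route the paper takes: the paper's proof simply states that \eqref{BS-Lp}--\eqref{BS-KS} ``follow straightforwardly'' from \eqref{Lame_res_1}--\eqref{Lame_res_3} (deferring the explicit computation to \cite{Co19}), and your sandwiching of the resolvent bounds between the multiplications by $V_{1/2}$ and $|V|^{1/2}$ — via H\"older in the Lebesgue case and via the pointwise bound $|V_{1/2}f|\leq|V|^{1/2}|f|$ together with the $L^2(|V|^{-1}dx)\to L^2(|V|dx)$ estimates in the weighted cases — is exactly that computation, with the exponent algebra checking out. The only residual bookkeeping (which you correctly identify as the remaining task) is the compatibility of the stated admissibility ranges of $\alpha,p,\beta$ in \Cref{lemma:uniform-estimate} with the endpoint values of $\gamma$, an issue the paper itself does not address beyond the citation.
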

\begin{proof}
	The three estimates~\eqref{BS-Lp},~\eqref{BS-MC} and~\eqref{BS-KS} follow straightforwardly from the validity of~\eqref{Lame_res_1},~\eqref{Lame_res_2} and~\eqref{Lame_res_3}, respectively. An explicit proof is obtained in~\cite{Co19} as a byproduct of the proofs of Theorem 1.2, Theorem 1.3 and Theorem 1.4 there.
\end{proof}

For later purposes, we rewrite the statement of \Cref{lemma_BS-estimates} in
the case that $d=3$ and $\gamma=0$, since in this situation we are able to provide more explicit information on the bound of the operator norm of the Birman-Schwinger operator.
\begin{lemma}\label{lemma_BS-estimates-d=3}
  Let $d=3$ and let $z\in \C \setminus [0,\infty).$  Assume $1<p\leq
  3/2.$  Then the following estimates for the Birman-Schwinger operator $K_z:=|V|^{1/2}(-\Delta^\ast-z)^{-1}V_{1/2}$ hold true:
  \begin{align}
    \label{BS-Lp-d=3}
    & \|K_z\|_{L^2(\R^3)\to L^{2}(\R^3)}\leq \frac{2^{4/3}(1+6\cot^2(\pi/12))}{3\pi^{4/3} \min\{\mu, \lambda+2\mu\}} \|V\|_{L^{3/2}(\R^3)},
    \\
    & \label{BS-MC-d=3}
      \|K_z\|_{L^2(\R^3)\to L^{2}(\R^3)}\leq
      \frac{c_\textup{F}(1+ 6 C^2)}{\min\{\mu,\lambda+2\mu\}}
      \|V\|_{\mathcal{L}^{2, p}(\R^3)},
  \end{align}
    with $c_\textup{F}$ as in \eqref{eq:d=3-a} and  $C>0$.
    If, in addition, $V\in A_2(\R^3),$ then
    \begin{equation}\label{BS-KS-d=3}
      \|K_z\|_{L^2(\R^3)\to L^{2}(\R^3)}\leq
      \frac{c_{\textup{KS}}(1+6c_V^2)}{\min\{\mu,\lambda+ 2\mu\}}
      \|V\|_{\mathcal{KS}_2(\R^3)},
    \end{equation}
    with $c_{\textup{KS}}$ as in~\eqref{eq:d=3-a} and
    $c_V :=C\, Q_2(|V|),$ for $C>0$. 
\begin{proof}
  Bounds~\eqref{BS-Lp-d=3}-\eqref{BS-KS-d=3} are
  simply bounds~\eqref{BS-Lp}-\eqref{BS-KS} in
  the specific framework considered here. To get the explicit values of the constants in
  in~\eqref{BS-Lp-d=3}-\eqref{BS-KS-d=3}, 
  we provide a direct proof which relies on the explicit expression of the integral kernel $\mathcal{G}_z(x,y)$ of $(-\Delta-z)^{-1}$ in $d=3.$
			
To bound the operator norm of $K_z$ we estimate the inner product $\langle f, K_z g \rangle,$ for any $f, g\in L^2(\R^3)^3.$ The relation~\eqref{eq:resolvent} gives
\begin{equation}\label{eq:initial}
			|\langle f, K_z g \rangle|
			\leq\frac{1}{\mu}|\langle f, |V|^{1/2}(-\Delta - \tfrac{z}{\mu}) G_\textup{S} \rangle| + \frac{1}{\lambda + 2\mu}|\langle f, |V|^{1/2}(-\Delta -\tfrac{z}{\lambda + 2\mu}) G_\textup{P} \rangle|,
\end{equation}
where we set $G =G_\textup{S} + G_\textup{P}:=V_{1/2}g.$
			First we estimate $|\langle f, |V|^{1/2}(-\Delta - \tfrac{z}{\mu}) G_\textup{S} \rangle|.$
			Given the explicit expression~\eqref{eq:integral-kernel} for the integral kernel of $(-\Delta-\zeta)^{-1},$ $\zeta\in \C\setminus [0,\infty),$ one has that $\mathcal{G}_\zeta(x,y)$ is bounded in absolute value by the Green function $\mathcal{G}_0(x,y):=(4\pi |x-y|)^{-1},$ \emph{i.e.}, $|\mathcal{G}_\zeta(x,y)|\leq \mathcal{G}_0(x,y).$ Hence
\begin{equation*}
  \begin{split}
    |\langle f, |V|^{1/2}(-\Delta - \tfrac{z}{\mu}) G_\textup{S} \rangle|
    &\leq\langle |f|, |V|^{1/2}|(-\Delta-\tfrac{z}{\lambda + 2\mu})G_\textup{S}| \rangle\\
    &=\iint_{\R^3\times \R^3} |f|(x)|V(x)|^{1/2} |\mathcal{G}_{z/\mu} (x,y)| |G_\textup{S}(y)|\, dx\, dy\\
    &\leq \frac{1}{4\pi} \iint_{\R^3\times \R^3} \frac{|f(x)| \, |V(x)|^{1/2} |G_\textup{S}(y)|}{|x-y|}\, dx\, dy\\
    &\leq \frac{2^{4/3}}{3\pi^{4/3}} \||f|\,|V|^{1/2}\|_{L^{6/5}(\R^3)} \||G_\textup{S}|\|_{L^{6/5}(\R^3)},
  \end{split}
\end{equation*}
where in the last inequality we used the sharp
Hardy-Littlewood-Sobolev inequality (see~\cite{Li1983}, \cite[Thm.~4.3]{Li_Lo}). Analogous computations for $|\langle f, |V|^{1/2}(-\Delta - \tfrac{z}{\mu}) G_\textup{P} \rangle|$ give
			\begin{equation*}
				|\langle f, |V|^{1/2}(-\Delta - \tfrac{z}{\mu}) G_\textup{P} \rangle|
				\leq \frac{2^{4/3}}{3\pi^{4/3}} \||f|\,|V|^{1/2}\|_{L^{6/5}(\R^3)} \||G_\textup{P}|\|_{L^{6/5}(\R^3)}.
			\end{equation*}
			 Plugging the last estimates in the bound~\eqref{eq:initial} gives
			 \begin{equation*}
					|\langle f, K_z g \rangle|
					\leq \frac{1}{\min\{\mu, \lambda+2\mu\}}\frac{2^{4/3}}{3\pi^{4/3}}  \|f |V|^{1/2}\|_{L^{6/5}(\R^3)^3} \big(\|G_\textup{S}\|_{L^{6/5}(\R^3)^3} + \|G_\textup{P}\|_{L^{6/5}(\R^3)^3} \big).
			\end{equation*}
			Using the orthogonality property in
                        Lemma~\ref{lemma:orthogonality}, thanks to the H\"older inequality one has
			\begin{equation*}
				\begin{split}
	|\langle f, K_z g \rangle|
	&\leq \frac{1}{\min\{\mu, \lambda+2\mu\}}\frac{2^{4/3}}{3\pi^{4/3}} (1+ 6 \cot^2(\pi/12)) \|f |V|^{1/2}\|_{L^{6/5}(\R^3)^3} \|V_{1/2} \,g\|_{L^{6/5}(\R^3)^3}\\
	&\leq \frac{1}{\min\{\mu, \lambda+2\mu\}} \frac{2^{4/3}}{3\pi^{4/3}} (1+ 6 \cot^2(\pi/12)) \| V\|_{L^{3/2}(\R^3)^{3\times 3}} \|f\|_{L^2(\R^3)^3} \|g\|_{L^2(\R^3)^3}.
				\end{split}
			\end{equation*}
			Taking the supremum over all $f,g\in L^2(\R^3)^3$ with norm equal to one, gives the bound in~\eqref{BS-Lp-d=3}.
			
			Now we are in position to prove~\eqref{BS-MC-d=3} and~\eqref{BS-KS-d=3}. As a starting point we observe that under the assumptions of the lemma the Hardy type inequality~\eqref{eq:Hardy} with~\eqref{eq:d=3-a} holds true.	
			
It is known that estimates of type~\eqref{eq:Hardy} are equivalent to weighted
boundedness properties of the $1$- fractional integral operator
$I_1=H_0^{-1/2}$ where $H_0:=-\Delta$ (see~\cite[Lemma
1]{FKV18}). Then~\eqref{eq:Hardy} is equivalent to
	\begin{equation}\label{equiv}
		\||V|^{1/2}H_0^{-1/2}\|_{L^2\to L^2}\leq \sqrt{a}
	\end{equation}	
        and, by taking the adjoint, one also has
	\begin{equation}\label{adjoint}
		\|H_0^{-1/2}|V|^{1/2}\|_{L^2\to L^2}\leq \sqrt{a},
	\end{equation}
	where $a$ is as in~\eqref{eq:d=3-a}.
	
	Let us prove~\eqref{BS-MC-d=3} first, that is let us assume
        that $V\in \mathcal{L}^{2,p}(\R^3)$. By Lemma~\ref{lemma:MC-M}, there exists $W\in A_2(\R^3)\cap \mathcal{L}^{2,p}(\R^3)$ such that $V(x)\leq W(x)$ for almost all $x\in \R^3.$
	Using this fact and the pointwise bound
        $|\mathcal{G}_\zeta(x,y)|\leq \mathcal{G}_0(x,y),$ $z\in
        \C\setminus (0,\infty),\, x, y\in \R^3,$ we have
\begin{equation}\label{S}
  \begin{split}
    & |\langle f, |V|^{1/2}(-\Delta - \tfrac{z}{\mu}) G_\textup{S}
    \rangle|
    \\
    &\leq \langle |f|, |W|^{1/2}H_0^{-1} |G_{\textup{S}}| \rangle\\
    &\leq \|f\|_{L^2(\R^3)^3} \||W|^{1/2}H_0^{-1/2}\|_{L^2\to L^2} \|H_0^{-1/2}|W|^{1/2}\|_{L^2\to L^2} \|G_{\textup{S}}\|_{L^2(|W|^{-1}dx)^3}\\
    &\leq a \|f\|_{L^2(\R^3)^3} \|G_{\textup{S}}\|_{L^2(|W|^{-1}dx)^3},
  \end{split}
\end{equation}
where in the last inequality we have used bounds~\eqref{equiv} and~\eqref{adjoint}.
	Similar computations for the term in~\eqref{eq:initial} involving the $P$ component, namely $|\langle f, |V|^{1/2}(-\Delta - \tfrac{z}{\lambda + 2\mu}) G_\textup{P} \rangle|$ give
	\begin{equation}\label{P}
		|\langle f, |V|^{1/2}(-\Delta - \tfrac{z}{\lambda + 2\mu}) G_\textup{P} \rangle|\leq a \|f\|_{L^2(\R^3)^3}\|G_{\textup{P}}\|_{L^2(|W|^{-1}dx)^3}.
	\end{equation}
	Plugging~\eqref{S} and~\eqref{P} in~\eqref{eq:initial} one has
	\begin{equation*}
		|\langle f, K_z g \rangle|\leq \frac{a}{\min\{\mu, \lambda + 2\mu\}} \|f\|_{L^2(\R^3)^3} \big(\|G_{\textup{S}}\|_{L^2(|W|^{-1}dx)^3} + \|G_{\textup{S}}\|_{L^2(|W|^{-1}dx)^3} \big).
	\end{equation*}
	Using the orthogonality property~\eqref{eq:orthogonality-2} stated in Lemma~\ref{lemma:orthogonality}, recalling that $|W|^{-1}\leq |V|^{-1}$ almost everywhere and using that $G:=V_{1/2}g$ we get
	\begin{equation}\label{eq:last-BS}
		|\langle f, K_z g \rangle|\leq a \frac{1+ 6 C^2 Q_2(|W|)^2}{\min\{\mu, \lambda + 2\mu\}} \|f\|_{L^2(\R^3)^3}\|g\|_{L^2(\R^3)^3}.
	\end{equation}
	From Lemma~\ref{lemma:MC-M} we know that $Q_2(|W|)$ is less
        than a constant independent on $W$, so
        estimate~\eqref{BS-MC-d=3} follows from~\eqref{eq:last-BS}
        using the bound~\eqref{eq:W-V} in $a=c_\textup{F}\|W\|_{\mathcal{L}^{2,p}(\R^3)}$.
	
        The proof of~\eqref{BS-KS-d=3} descends
        from~\eqref{eq:last-BS} with $W=V$ and $a=c_\textup{KS}\|V\|_{\mathcal{KS}_{2}(\R^3)}.$
	\end{proof}	
\end{lemma}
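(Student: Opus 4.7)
The plan is to obtain all three bounds by estimating the sesquilinear form $\langle f, K_z g\rangle$ for arbitrary $f, g \in L^2(\R^3)^3$ and then taking the supremum over unit vectors. First I would use the resolvent decomposition~\eqref{eq:resolvent} to split, writing $G := V_{1/2}g$ with Helmholtz decomposition $G = G_\textup{S} + G_\textup{P}$:
\begin{equation*}
|\langle f, K_z g\rangle|
\leq \tfrac{1}{\mu}\bigl|\langle |V|^{1/2} f, (-\Delta - z/\mu)^{-1} G_\textup{S}\rangle\bigr|
+ \tfrac{1}{\lambda+2\mu}\bigl|\langle |V|^{1/2} f, (-\Delta - z/(\lambda+2\mu))^{-1} G_\textup{P}\rangle\bigr|.
\end{equation*}
Then, exploiting that $d=3$ affords the explicit formula~\eqref{eq:integral-kernel}, I would use the pointwise bound $|\mathcal{G}_\zeta(x,y)| \leq \mathcal{G}_0(x,y) = (4\pi|x-y|)^{-1}$ (valid on $\C\setminus[0,\infty)$ with the principal branch) to replace each resolvent by the Newtonian kernel $H_0^{-1}$, producing a $z$-independent upper bound — this is why the right-hand sides carry no $|z|^\alpha$ factor.

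For~\eqref{BS-Lp-d=3} the plan is to apply the sharp Hardy--Littlewood--Sobolev inequality in $\R^3$ (constant $2^{4/3}/(3\pi^{4/3})$) to each of the two kernel integrals, getting $L^{6/5}$ norms on both sides. I would then invoke the $L^{6/5}$ Helmholtz orthogonality~\eqref{eq:orthogonality} of \Cref{lemma:orthogonality}: since $p=6/5$ pairs with $p'=6$, the Riesz-transform constant in~\eqref{Riesz_1} is $\cot(\pi/12)$, producing the factor $1 + 6\cot^2(\pi/12)$ when controlling $\|G_\textup{S}\|_{L^{6/5}} + \|G_\textup{P}\|_{L^{6/5}}$ by $\|V_{1/2}g\|_{L^{6/5}}$. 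A final Hölder estimate with exponents $(2,6,3)$ factors $\||V|^{1/2}|f|\|_{L^{6/5}} \cdot \|V_{1/2}g\|_{L^{6/5}} \leq \|V\|_{L^{3/2}}\|f\|_{L^2}\|g\|_{L^2}$.

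For~\eqref{BS-MC-d=3} and~\eqref{BS-KS-d=3} the pointwise-majorized integrals take the form $\langle |V|^{1/2}|f|,\, H_0^{-1}|G_\bullet|\rangle$; I would factor $H_0^{-1} = H_0^{-1/2} H_0^{-1/2}$ and use that the Hardy-type inequality~\eqref{eq:Hardy} with constant $a$ as in~\eqref{eq:d=3-a} is equivalent to $\||W|^{1/2} H_0^{-1/2}\|_{L^2\to L^2} \leq \sqrt{a}$ (and by duality, to $\|H_0^{-1/2}|W|^{1/2}\|_{L^2\to L^2} \leq \sqrt{a}$) for a suitable $A_2$ majorant $W$ of $|V|$. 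In the Kerman--Sawyer case I would simply take $W = |V|$ with $a = c_\textup{KS}\|V\|_{\mathcal{KS}_2}$. In the Morrey--Campanato case, since the assumption does not include $|V|\in A_2$, I would invoke \Cref{lemma:MC-M} to extract $W \in A_2(\R^3)\cap \mathcal{L}^{2,p}(\R^3)$ with $V \leq W$ pointwise and $\|W\|_{\mathcal{L}^{2,p}} \leq C\|V\|_{\mathcal{L}^{2,p}}$, yielding $a = c_\textup{F}\|W\|_{\mathcal{L}^{2,p}}$. Either way, I would arrive at
\begin{equation*}
|\langle f, K_z g\rangle| \leq \frac{a}{\min\{\mu,\lambda+2\mu\}} \|f\|_{L^2}\bigl(\|G_\textup{S}\|_{L^2(|W|^{-1}dx)} + \|G_\textup{P}\|_{L^2(|W|^{-1}dx)}\bigr),
\end{equation*}
and close the argument by the weighted Helmholtz orthogonality~\eqref{eq:orthogonality-2}, which contributes the factor $1 + 6 c_W^2$ with $c_W = CQ_2(|W|)$, followed by the pointwise domination $|W|^{-1}|V_{1/2}g|^2 \leq |W|^{-1}|V||g|^2 \leq |g|^2$ to recover $\|g\|_{L^2}$.

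The main obstacle I anticipate is cleanly tracking the distinction between the original potential $V$ (which the final norms must involve) and the auxiliary $A_2$-majorant $W$ used in the weighted-boundedness step of~\eqref{BS-MC-d=3}, while verifying that the $Q_2(|W|)$ appearing in the orthogonality constant is under control by \Cref{lemma:MC-M} with a bound independent of $V$; in the Kerman--Sawyer case, by contrast, $Q_2(|V|)$ survives explicitly in $c_V$, so no such reconciliation is needed.
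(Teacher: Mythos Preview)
Your proposal is correct and follows essentially the same approach as the paper's proof: both argue by duality via $\langle f, K_z g\rangle$, split through the Helmholtz decomposition~\eqref{eq:resolvent}, replace the resolvent kernels by the Newtonian potential via $|\mathcal{G}_\zeta|\leq \mathcal{G}_0$, and then branch into (i) sharp HLS plus the unweighted orthogonality~\eqref{eq:orthogonality} for~\eqref{BS-Lp-d=3}, and (ii) the $H_0^{-1/2}$ factorization with the $A_2$-majorant from \Cref{lemma:MC-M} and the weighted orthogonality~\eqref{eq:orthogonality-2} for~\eqref{BS-MC-d=3}--\eqref{BS-KS-d=3}. Your anticipation of the $V$-versus-$W$ bookkeeping and the role of $Q_2(|W|)$ matches the paper's handling exactly.
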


The next lemma represents another relevant consequence of
  the boundedness of the Birman-Schwinger operator. 
  We are grateful to R.L.~Frank for showing us the argument.
\begin{lemma}\label{lemma:Frank}
Let $\gamma, p$ and $\alpha$ as in Lemma~\ref{lemma_BS-estimates}. If $V\in L^{\gamma+ d/2}(\R^d),$ $V\in \mathcal{L}^{\alpha, p}(\R^d),$ or $V\in \mathcal{KS}_\alpha(\R^d),$ then the multiplication by $|V|^{1/2}$ is a bounded operator from $H^1(\R^d)^d$ to $L^2(\R^d)^d.$
\end{lemma}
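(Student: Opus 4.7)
The goal is equivalent to establishing the form inequality $\int_{\R^d}|V||u|^2\,dx \leq C\|u\|_{H^1}^2$ for every $u\in H^1(\R^d)^d$. My plan is to derive it from the Birman-Schwinger bounds of \Cref{lemma_BS-estimates} via a $TT^\ast$ argument using the spectral calculus of the self-adjoint operator $-\Delta^\ast$.

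The first step is to upgrade \Cref{lemma_BS-estimates} to its symmetric counterpart, namely
\[
\bigl\||V|^{1/2}(-\Delta^\ast-z)^{-1}|V|^{1/2}\bigr\|_{L^2\to L^2}\leq M(z),\qquad z\in\C\setminus[0,\infty),
\]
with the same right-hand side as in the three estimates of \Cref{lemma_BS-estimates}. This is obtained by applying \Cref{lemma_BS-estimates} to the Hermitian non-negative potential $|V|$ in place of $V$: since $V=\sgn(V)|V|$ and $|V|^{1/2}$ has the same range as $|V|$, one checks that $V_{1/2}^{(|V|)}=|V|^{1/2}$, and the relevant Lebesgue, Morrey-Campanato or Kerman-Sawyer norms (together with the $A_2$ constant) of $|V|$ coincide with those of $V$.

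The second step is to fix $z_0=-1$ and consider the densely defined operator $T:=(-\Delta^\ast+1)^{-1/2}|V|^{1/2}$, with domain $\mathcal{D}(T):=\{g\in L^2:|V|^{1/2}g\in L^2\}$; this is dense in $L^2(\R^d)^d$ by the local integrability of $|V|$ implied by the hypotheses. Since $(-\Delta^\ast+1)^{-1/2}$ is bounded self-adjoint on $L^2$ by the spectral theorem, for $f,g\in\mathcal{D}(T)$ one has
\[
\langle Tf,Tg\rangle=\langle|V|^{1/2}f,(-\Delta^\ast+1)^{-1}|V|^{1/2}g\rangle,
\]
and the estimate from step one yields $\|Tg\|^2\leq M(-1)\|g\|^2$. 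Hence $T$ extends to a bounded operator on $L^2$ with $\|T^\ast\|=\|T\|\leq M(-1)^{1/2}$, and a routine duality/distributional computation identifies $T^\ast\phi=|V|^{1/2}(-\Delta^\ast+1)^{-1/2}\phi$ for every $\phi\in L^2$; in particular the right-hand side belongs to $L^2$.

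The conclusion then follows: for $u\in H^1(\R^d)^d=\mathcal{D}((-\Delta^\ast+1)^{1/2})$, set $\phi:=(-\Delta^\ast+1)^{1/2}u\in L^2$. By \eqref{eq:h_0} together with the orthogonality of the Helmholtz decomposition (\Cref{lem:helmholtz}),
\[
\|\phi\|_{L^2}^2=h_0[u]+\|u\|_{L^2}^2\leq\max\{\mu,\lambda+2\mu,1\}\|u\|_{H^1}^2.
\]
Since $|V|^{1/2}u=T^\ast\phi$, we obtain $\||V|^{1/2}u\|_{L^2}\leq\|T^\ast\|\|\phi\|_{L^2}\lesssim\|u\|_{H^1}$, as desired. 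The main obstacle I expect is the first step: making sure the Birman-Schwinger bounds of \Cref{lemma_BS-estimates} really transfer to the symmetric version with $|V|^{1/2}$ on both sides. In the formulation above this reduces to the short polar-decomposition check that $V_{1/2}^{(|V|)}=|V|^{1/2}$, but if one preferred to read the bound off the existing statement directly, one would have to inspect its proof and notice that only the pointwise modulus $|V_{1/2}|=|V|^{1/2}$ enters the estimates.
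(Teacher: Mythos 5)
Your proposal is correct and follows essentially the same route as the paper: the paper also passes to the symmetric bound $\||V|^{1/2}(-\Delta^\ast-z)^{-1}|V|^{1/2}\|\leq C(z,V)\|V\|$ for $z\leq 0$ ("minor modifications" of \Cref{lemma_BS-estimates}), factors it as $AA^\ast$ with $A=|V|^{1/2}(-\Delta^\ast-z)^{-1/2}$, and uses $\|AA^\ast\|=\|A\|^2$ to get $\int|V||u|^2\leq C(z,V)\|V\|\,(h_0[u]-z\|u\|^2)$, which is your $TT^\ast$ argument with a fixed negative $z$. Your explicit choice $z_0=-1$ and the polar-decomposition check $V_{1/2}^{(|V|)}=|V|^{1/2}$ are if anything slightly more careful than the paper's presentation.
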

\begin{proof}
	Minor modifications of the argument in Lemma~\ref{lemma_BS-estimates} ensure that the operator $|V|^{1/2}(-\Delta^\ast -z)^{-1}|V|^{1/2}$ with $z\in (-\infty, 0]$ is a bounded operator in $L^2,$ more precisely
\begin{equation*}
	\||V|^{1/2}(-\Delta^\ast -z)^{-1} |V|^{1/2}\|\leq C(z,V) \|V\|,
\end{equation*}	
where $\|V\|$ denotes $\|V\|=\|V\|_{L^{\gamma + \frac{d}{2}}(\R^d)},$
$\|V\|=\|V\|_{\mathcal{L}^{\alpha, p}(\R^d)}$ or
$\|V\|=\|V\|_{\mathcal{KS}_\alpha(\R^d)}$ and $C(z,V)$ is a constant
that may depend on $|z|$ and $V$
(\emph{cfr.}~\eqref{BS-Lp}--\eqref{BS-KS}).
Since $z\in (-\infty,0]$, $(-\Delta^\ast-z)$ is a positive operator. 
We write
$|V|^{1/2}(-\Delta^\ast-z)^{-1}|V|^{1/2}=
AA^\ast$, with
$A:=|V|^2(-\Delta^\ast -z)^{-1/2}$. Using that
$\|AA^\ast\|=\|A^\ast\|^2=\|A\|^2=\||V|^{1/2}(-\Delta^\ast-z)^{-1/2}\|^2,$
 one has 
\begin{equation}\label{eq:rigorous}
	\begin{split}
		\int_{\R^d}
                |V||u|^2&=\||V|^{1/2}(-\Delta^\ast-z)^{-1/2}(-\Delta^\ast-z)^{1/2}
                u\|_{L^2(\R^d)}^2
                \\
		&\leq C(z, V)\|V\| \|(-\Delta^\ast-z)^{1/2}u\|_{L^2(\R^d)}^2\\
		&=C(z, V)\|V\| \langle u, (-\Delta^\ast-z) u \rangle\\
		&=C(z, V)\|V\| \big(h_0[u] -z\|u\|_{L^2(\R^d)}^2\big),
	\end{split}
\end{equation}  
where $h_0$ denotes the quadratic form associated to the Lamé operator $-\Delta^\ast$ defined in~\eqref{eq:h_0}. 
Using the explicit expression~\eqref{eq:h_0} for the quadratic form $h_0$ one can rewrite~\eqref{eq:rigorous} as
\begin{equation}\label{eq:generalization}
	\int_{\R^d}|V||u|^2\leq C(z,V, \lambda, \mu)\|V\|\big(\|\nabla u\|_{L^2(\R^d)}^2 - z\|u\|_{L^2(\R^d)}^2\big),
\end{equation}
where $C(z,V, \lambda, \mu)=C(z,V)\max\{\mu, \lambda+2\mu\}$ and $C(z, V)$ is as in~\eqref{eq:rigorous}. Hence, $|V|^{1/2}u\in L^2(\R^d)^d$ whenever $u\in H^1(\R^d)^d.$ 
\end{proof}

\begin{remark}
	Assuming an Hardy-type condition like
\begin{equation}\label{eq:Hardy-type} 
	\int_{\R^d} |V||u|^2\,dx\leq C(V)\int_{\R^d} |\nabla u|^2\, dx, 
	\qquad \forall u\in C^\infty_0(\R^d)
\end{equation}
would also serve the purpose of ensuring boundedness of $|V|^{1/2}$ as an operator from $H^1(\R^d)^d$ to $L^2(\R^d)^d.$ As a matter of fact, it is known that~\eqref{eq:Hardy-type} holds true if $V\in L^{d/2}(\R^d),$ that is $V\in L^{\gamma+ d/2}(\R^d)$ and $\gamma=0$ (as a consequence of H\"older inequality and Sobolev embedding), if $V\in \mathcal{L}^{\alpha,p}(\R^d)$ with $\alpha=2$ (which, in turn, gives $\gamma=0,$ recall $\alpha:=2d/(2\gamma+d)$) and $1<p\leq \frac{d}{2},$ fact that was discovered by Fefferman in~\cite{Fefferman1983} (see also Chiarenza-Frasca~\cite{Ch_Fr}) and if $V\in \mathcal{KS}_\alpha(\R^d),$ with $\alpha=2$ (notice that as $\alpha=2d\beta/(2\gamma +d)$ and $\beta=(d+2\gamma)(d-1)/[2(d-2\gamma)],$ $\alpha=2$ gives $\gamma=d(3-d)/4.$ Since $\gamma\geq 0$ this forces $d=3$ and so $\gamma=0.$). Thus estimate~\eqref{eq:generalization} generalizes~\eqref{eq:Hardy-type}, which corresponds to $\gamma=0$ and after letting $z$ go to zero (notice that if $\gamma=0$ the constant $C(z,V,\lambda, \mu)$ in~\eqref{eq:generalization} is no more dependent on $z,$ see~\eqref{BS-Lp}--\eqref{BS-KS}).
\end{remark}

\section{Proofs}\label{sec:proofs}
In this section we provide the proofs of
Theorems~\ref{thm:Lp-result}--\ref{thm:KS-result}
valid in dimension $d\geq 2.$ 
We give two different proofs of \Cref{thm:Lp-result}:
the proof in \Cref{sec:Lp} is strongly sensitive of the $L^p$
framework, while the proof in \Cref{sec:proofs-sub} is more robust and
it is adapted to prove also \Cref{thm:MC-result} and \Cref{thm:KS-result}.
\subsection{Proof of \Cref{thm:Lp-result}}\label{sec:Lp}
The strategy of the proof of \Cref{thm:Lp-result}  follows the
one of~\cite[Thm.~3.2]{FS17} with the modifications necessary to treat the Lam\'e
operator.

For notation convenience we define $p$ such that $p/(2-p)=\gamma+ d/2.$ Thus the assumptions on $\gamma$ give $1<p\leq 6/5$ if $d=2$ and $2d/(d+2)\leq p\leq 2(d+1)/(d+3)$ if $d\geq 3.$

Thanks to the H\"older inequality the multiplication by $V\in
L^\frac{p}{2-p}(\R^d)$ is a bounded operator from $L^{p'}(\R^d)^d$ to
$L^p(\R^d)^d$ with $1/p+1/{p'}=1.$
Let $z\in \C$ be an eigenvalue of $-\Delta^\ast + V$ in $L^2(\R^d)^d$ with eigenfunction $u.$ Since $-\Delta^\ast + V$ is defined via $m$-sectorial forms, we know a-priori that an eigenfunction satisfies $u\in H^1(\R^d)^d.$ In particular, by Sobolev embedding, $u\in L^r(\R^d)^d,$ for $2\leq r\leq 2d/(d-2)$ and so $u\in L^{p'}(\R^d)^d.$   

We start considering the easiest situation, \emph{i.e.}, when
$z\in \C\setminus [0,\infty).$
In this case the resolvent operator $(-\Delta^\ast-z)^{-1}\in
\mathcal{B}(L^p(\R^d)^d;L^{p'}(\R^d)^d )$
and from \Cref{lemma:uniform-estimate} one has 
\begin{equation}\label{eq:repeated}
	\|(-\Delta^\ast-z)^{-1}\|_{L^p(\R^d)^d\to L^{p'}(\R^d)^d}\leq N(z),
	\qquad N(z)=c_{p,d,\lambda, \mu} |z|^{-\frac{d+2}{2} + \frac{d}{p}}.
\end{equation}
 Using that $(-\Delta^\ast + V)u=zu$ one can write
\begin{equation}\label{eq:u-easy}
	u=(-\Delta^\ast -z)^{-1} (-\Delta^\ast-z)u=-(-\Delta^\ast -z)^{-1} Vu.
\end{equation}  
From the previous expression and the resolvent estimate~\eqref{eq:repeated}, one has
\begin{equation*}
	\begin{split}
		\|u\|_{L^{p'}(\R^d)^d}
		&=\|(-\Delta^\ast -z)^{-1} Vu\|_{L^{p'}(\R^d)^d} 
		\leq N(z)\|Vu\|_{L^{p}(\R^d)^d}\\
		&\leq N(z) \| V \|_{L^\frac{p}{2-p}(\R^d)} \norm{u}_{L^{p'}(\R^d)^d}.
	\end{split}
\end{equation*}  
Using that $N(z)=c_{p,d,\lambda, \mu}\abs{z}^{-\frac{d+2}{2} + \frac{d}{p}},$ we have 
\begin{equation}\label{eq:bound-thesis}
	1\leq c_{p,d,\lambda, \mu}\abs{z}^{-\frac{d+2}{2} + \frac{d}{p}} \| V \|_{L^\frac{p}{2-p}(\R^d)},
\end{equation}   
which gives the thesis once we replace $p/(2-p)=\gamma + d/2.$

It is clear that $z=0$ belongs to the right hand side
of~\eqref{eq:thesis}, then it remain to consider the case $z\in
(0,\infty).$ In this situation, since $z$ belongs to the spectrum of
the free Lamé operator $-\Delta^\ast,$ the
expression~\eqref{eq:u-easy} no longer makes sense. On the other hand,
taking $\varepsilon>0,$ the operator $(-\Delta^\ast
-z-i\varepsilon)^{-1}$ is well defined and bounded from $L^p(\R^d)^d$
to $L^{p'}(\R^d)^d.$  Thus, for $u$ such that $(-\Delta^\ast + V)u=zu,$ one considers an approximating
eigenfunction $u_\varepsilon$ defined as
\begin{equation*}
	u_\varepsilon=(-\Delta^\ast -z-i\varepsilon)^{-1} (-\Delta^\ast -z)u
	=-(-\Delta^\ast -z-i\varepsilon)^{-1}Vu.
\end{equation*}
Since $V\in \mathcal{B}(L^{p'}(\R^d)^d;L^p(\R^d)^d)$ and $(-\Delta^\ast -z-i\varepsilon)^{-1}\in \mathcal{B}(L^p(\R^d)^d;L^{p'}(\R^d)^d),$ we infer that $u_\varepsilon\in L^{p'}(\R^d)^d$ and 
\begin{equation}\label{eq:preliminary-eps}
	\begin{split}
		\|u_\varepsilon\|_{L^{p'}(\R^d)^d}&=\|(-\Delta^\ast -z-i\varepsilon)^{-1}Vu\|_{L^{p'}(\R^d)^d}\leq N(z+i\varepsilon)\|Vu\|_{L^{p}(\R^d)^d}\\
		&\leq N(z+i\varepsilon) \| V \|_{L^\frac{p}{2-p}(\R^d)} \norm{u}_{L^{p'}(\R^d)^d}.
	\end{split}
\end{equation}
From its explicit expression, one sees that $N(z+i\varepsilon)$ converges to $N(z)$ as $\varepsilon \to 0,$ thus the sequence $u_\varepsilon$ is uniformly bounded in ${L^{p'}(\R^d)^d}$ and therefore converges (up to subsequences) weakly in ${L^{p'}(\R^d)^d}.$ Now we want to show that $u_\varepsilon$ converges strongly in $L^2(\R^d)^d$ to $u$ as $\varepsilon$ approaches zero. Due to the $L^2$ orthogonality  of the $S$ and $P$ component of the Helmholtz decomposition it is enough to check that $(u_\varepsilon)_S$ converges to $u_S,$ the convergence of $(u_\varepsilon)_P$ to $u_P$ follows similarly.  
Using the expressions~\eqref{eq:simple_writing} and~\eqref{eq:resolvent} and applying Plancherel theorem one has
\begin{equation*}
	\begin{split}
		\|(u_\varepsilon)_S-u_S\|_{L^2(\R^d)^d}
		&=\big\|\big [\big (-\Delta - \tfrac{z+i\varepsilon}{\mu} \big)^{-1} (-\Delta - \tfrac{z}{\mu})- I\big ]u_S\big \|_{L^2(\R^d)^d}\\
		&=\big\|\big [\big (|\xi|^2- \tfrac{z+i\varepsilon}{\mu} \big)^{-1} (|\xi|^2 - \tfrac{z}{\mu})- 1\big ]\widehat{u}_S\big \|_{L^2(\R^d)^d},
	\end{split}
\end{equation*} 
then the conclusion follows from dominated convergence theorem.

To show that $u_\varepsilon$ converges weakly to $u$ in $L^{p'}$, it is enough to prove that $\langle u_\varepsilon, \varphi \rangle_{p' p}$ converges to $\langle u, \varphi \rangle_{p' p}$ for all $\varphi \in L^2\cap L^p,$ that is immediate from Cauchy-Schwarz inequality and the strong convergence of $u_\varepsilon$ to $u$ in $L^2.$

Finally, using the weak lower semi-continuity of the norm and the preliminary estimate~\eqref{eq:preliminary-eps}, one has
\begin{equation*}
	\begin{split}
		\|u\|_{L^{p'}(\R^d)^d}
		&\leq \liminf_{\varepsilon \to 0} \| u_\varepsilon\|_{L^{p'}(\R^d)^d}
		\leq \liminf_{\varepsilon \to 0} N(z+i\varepsilon) \| V \|_{L^\frac{p}{2-p}(\R^d)} \norm{u}_{L^{p'}(\R^d)^d}\\
		&=N(z) \| V \|_{L^\frac{p}{2-p}(\R^d)} \norm{u}_{L^{p'}(\R^d)^d}.
	\end{split}
\end{equation*} 
From this, as above, one concludes that the bound~\eqref{eq:bound-thesis} holds, which gives the thesis. 
\qed

\begin{remark}\label{rmk:two-steps}
  In the proof of the previous result two
        ingredients have been used in a crucial way: the uniform resolvent estimate~\eqref{Lame_res_1} from Lemma~\ref{lemma:uniform-estimate}, which holds true for spectral parameters outside the spectrum $\sigma(-\Delta^\ast)=[0,\infty)$, and the continuity of $N(z)$ up to $(0,\infty),$ that is $N(z+i\varepsilon) \to N(z)$ as $\varepsilon$ goes to zero and $z\in (0,\infty),$ which allows us to cover also the case of possible embedded eigenvalues $z\in (0,\infty).$
\end{remark}
	
	Recently, Kwon, Lee and Seo~\cite{KLS20}, adapting recent sharp resolvent estimates obtained for the Laplacian by two of the three authors in~\cite{KL20}, were able to prove analogous estimates for the Lamé operator, which improve the one stated in Lemma~\ref{lemma:uniform-estimate}. More precisely, they proved the following result:
	\begin{theorem}[{\cite[Theorem 1.3]{KLS20}}]
	\label{thm:KLS}
		Let $d\geq 2,$ $z\in \C\setminus [0,\infty),$ $1<p\leq q<\infty.$ If $(\frac{1}{p}, \frac{1}{q})\in \mathcal{R}_1 \cup \widetilde{\mathcal{R}}_2 \cup \widetilde{\mathcal{R}}_3 \cup \widetilde{\mathcal{R}}_3'$ (see ~\cite[Def.~1.1]{KLS20}), then one has
		\begin{equation*}
			\|(-\Delta^\ast -z)^{-1}\|_{L^p(\R^d)\to L^q(\R^d)}\leq N(z),
		\end{equation*}
		where $N(z)=c_{p,q,d, \lambda, \mu}|z|^{-1 + \frac{d}{2} \big( \frac{1}{p}-\frac{1}{q}\big)} \dist(z/|z|, [0,\infty))^{-\gamma_{p,q}},$ with
		$\gamma_{p,q}:= \max \{0, 1- \frac{d+1}{2}(\frac{1}{p}-\frac{1}{q}), \frac{d+1}{2}-\frac{d}{p}, \frac{d}{q}- \frac{d-1}{2}\}.$
	\end{theorem}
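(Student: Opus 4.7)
The plan is to reduce the claimed Lamé resolvent estimate to the sharp $L^p$--$L^q$ Laplacian resolvent estimate proved in~\cite{KL20}, by exploiting the Helmholtz identity~\eqref{eq:resolvent}. The crucial observation is that, since $\mu,\lambda+2\mu>0$, the rescalings $z\mapsto z/\mu$ and $z\mapsto z/(\lambda+2\mu)$ preserve both the condition $\zeta\in\C\setminus[0,\infty)$ and the angular argument: indeed
$$\frac{z/\mu}{|z/\mu|}=\frac{z/(\lambda+2\mu)}{|z/(\lambda+2\mu)|}=\frac{z}{|z|},$$
so the distance factor $\dist(\cdot/|\cdot|,[0,\infty))^{-\gamma_{p,q}}$ that appears in the Laplace bound is insensitive to the Lamé rescalings.

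First, given $g\in L^p(\R^d)^d$, I would decompose $g=g_\textup{S}+g_\textup{P}$ via~\Cref{lem:helmholtz} and apply~\eqref{eq:resolvent}, yielding
$$\|(-\Delta^\ast-z)^{-1}g\|_{L^q(\R^d)^d}\le \tfrac{1}{\mu}\|(-\Delta-\tfrac{z}{\mu})^{-1}g_\textup{S}\|_{L^q(\R^d)^d}+\tfrac{1}{\lambda+2\mu}\|(-\Delta-\tfrac{z}{\lambda+2\mu})^{-1}g_\textup{P}\|_{L^q(\R^d)^d}.$$
On each term I would invoke the estimate from~\cite{KL20} of the form $\|(-\Delta-\zeta)^{-1}\|_{L^p\to L^q}\le C|\zeta|^{-1+\frac{d}{2}(\frac1p-\frac1q)}\dist(\zeta/|\zeta|,[0,\infty))^{-\gamma_{p,q}}$. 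The identity $|z/\mu|^{-1+\frac{d}{2}(\frac1p-\frac1q)}=\mu^{1-\frac{d}{2}(\frac1p-\frac1q)}|z|^{-1+\frac{d}{2}(\frac1p-\frac1q)}$ combines with the prefactor $1/\mu$ to produce a constant depending only on $\mu,d,p,q$, and the analogous computation handles the $P$-piece. At this point the problem reduces to controlling $\|g_\textup{S}\|_{L^p(\R^d)^d}+\|g_\textup{P}\|_{L^p(\R^d)^d}$ by $\|g\|_{L^p(\R^d)^d}$.

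Next, I would use formula~\eqref{eq:Helmholtz-Riesz}, which expresses $g_\textup{S}$ and $g_\textup{P}$ as linear combinations of double Riesz transforms of the components of $g$. Since $1<p<\infty$, bound~\eqref{Riesz_1} from~\Cref{boundedness-Riesz-transform} applies and~\Cref{lemma:orthogonality} delivers $\|g_\textup{S}\|_{L^p(\R^d)^d}+\|g_\textup{P}\|_{L^p(\R^d)^d}\le C_p\|g\|_{L^p(\R^d)^d}$. Combining with the previous step yields the claimed estimate with a constant $c_{p,q,d,\lambda,\mu}$ depending only on the stated parameters.

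The real work is already packaged into~\cite{KL20}: the sharp Laplace resolvent estimate over the full admissible region $\mathcal{R}_1\cup\widetilde{\mathcal{R}}_2\cup\widetilde{\mathcal{R}}_3\cup\widetilde{\mathcal{R}}_3'$, with the correct exponent $\gamma_{p,q}$ in the distance-to-spectrum factor, is the delicate ingredient. The passage from Laplacian to Lamé is then routine bookkeeping once one verifies that the admissible region for $(1/p,1/q)$ is stable under the Lamé rescaling of the spectral parameter and that the condition $1<p\le q<\infty$ in force guarantees both the Helmholtz projection bound in $L^p$ and the full strength of the Laplace estimate from~\cite{KL20}.
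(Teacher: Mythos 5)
This theorem is quoted in the paper verbatim from \cite[Theorem 1.3]{KLS20} without any in-paper proof, and your proposed reduction --- Helmholtz decomposition, the identity \eqref{eq:resolvent}, the $L^p$-boundedness of the Helmholtz projections via the Riesz transform bounds of \Cref{boundedness-Riesz-transform} and \Cref{lemma:orthogonality}, the sharp Laplacian resolvent estimates of \cite{KL20}, and the observation that positive rescalings $z\mapsto z/\mu$, $z\mapsto z/(\lambda+2\mu)$ preserve $z/|z|$ and hence the distance factor --- is precisely the argument of \cite{KLS20} and is the same strategy the paper itself uses to prove the non-sharp \Cref{lemma:uniform-estimate}. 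The only step left implicit is that \eqref{eq:resolvent} is stated for $g\in L^2(\R^d)^d$ and must be extended to $L^p$ by density (routine once the projections $\pi_\textup{S},\pi_\textup{P}$ are known to be $L^p$-bounded for $1<p<\infty$), together with the bookkeeping that the tilded regions of \cite[Def.~1.1]{KLS20} are exactly where both the \cite{KL20} bound and the constraint $1<p\leq q<\infty$ hold.
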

	As a consequence, the following result on location of discrete eigenvalues is also proven.
	\begin{corollary}[{\cite[Corollary 1.4]{KLS20}}]
	\label{cor:KLS}
		Let $d\geq 2,$ $1<p\leq q<\infty,$ $(\frac{1}{p}, \frac{1}{q})\in \mathcal{R}_1 \cup \widetilde{\mathcal{R}}_2 \cup \widetilde{\mathcal{R}}_3 \cup \widetilde{\mathcal{R}}_3'$ (see ~\cite[Def.~1.1]{KLS20}). Then any eigenvalue $z\in \C\setminus [0,\infty)$ of $-\Delta^\ast + V$ acting on $L^q(\R^d)^d$ satisfies
		\begin{equation*}
			|z|^{1-\frac{d}{2}(\frac{1}{p}-\frac{1}{q})} \dist(z/|z|, [0,\infty))^{\gamma_{p,q}}\leq c_{p,q,d,\lambda, \mu}\|V\|_{L^\frac{pq}{q-p}(\R^d)}.
		\end{equation*}
	\end{corollary}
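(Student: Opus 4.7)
The proof proceeds via a Birman–Schwinger-type argument directly parallel to the one given for \Cref{thm:Lp-result} in \Cref{sec:Lp}, but now leveraging the sharper resolvent estimate provided by \Cref{thm:KLS}. Since the statement concerns only eigenvalues $z \in \C \setminus [0,\infty)$, no approximation procedure (like the $z + i\varepsilon$ regularization used to cover embedded eigenvalues in \Cref{thm:Lp-result}) is required here; compare \Cref{rmk:two-steps}.

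The plan is a three-line composition of a priori estimates. First, by the Hölder inequality, whenever $V \in L^{pq/(q-p)}(\R^d;\C^{d\times d})$ the multiplication operator by $V$ is bounded from $L^q(\R^d)^d$ to $L^p(\R^d)^d$, with operator norm at most $\|V\|_{L^{pq/(q-p)}(\R^d)}$; indeed, $1/p = 1/q + (q-p)/(pq)$, so $Vu \in L^p$ whenever $u \in L^q$. Second, since $z \notin \sigma(-\Delta^\ast) = [0,\infty)$ and the pair $(1/p, 1/q)$ lies in the admissible region $\mathcal{R}_1 \cup \widetilde{\mathcal{R}}_2 \cup \widetilde{\mathcal{R}}_3 \cup \widetilde{\mathcal{R}}_3'$, \Cref{thm:KLS} guarantees that $(-\Delta^\ast - z)^{-1} \in \mathcal{B}(L^p(\R^d)^d; L^q(\R^d)^d)$ with operator norm at most
$$N(z) = c_{p,q,d,\lambda,\mu}\,|z|^{-1 + \frac{d}{2}(\frac{1}{p} - \frac{1}{q})}\,\dist(z/|z|,[0,\infty))^{-\gamma_{p,q}}.$$

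Third, if $u \in L^q(\R^d)^d$ is a nontrivial eigenfunction with $(-\Delta^\ast + V)u = zu$, the eigenvalue equation rewrites as $u = -(-\Delta^\ast - z)^{-1} V u$. Chaining the two previous bounds gives
$$\|u\|_{L^q(\R^d)^d} \leq N(z)\,\|Vu\|_{L^p(\R^d)^d} \leq N(z)\,\|V\|_{L^{pq/(q-p)}(\R^d)}\,\|u\|_{L^q(\R^d)^d}.$$
Since $\|u\|_{L^q(\R^d)^d} > 0$, dividing through and substituting the explicit form of $N(z)$ yields the claimed inequality after rearrangement. There is no real obstacle in this proof; the only point requiring a moment of care is checking that $Vu \in L^p$ so that the resolvent can legitimately be applied to it, which is the first step above. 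All the spectral-theoretic work has been packaged inside \Cref{thm:KLS}, so the corollary is a genuinely routine consequence.
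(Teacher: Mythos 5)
Your argument is correct and is exactly the standard Birman--Schwinger/resolvent-composition argument: the paper does not reprove this corollary (it is quoted from \cite{KLS20}), but your three steps coincide with the paper's own proof of \Cref{thm:Lp-result} in \Cref{sec:Lp} restricted to $z\in\C\setminus[0,\infty)$, with \Cref{thm:KLS} replacing the self-dual estimate \eqref{Lame_res_1}. Nothing is missing.
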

	For the explicit expressions of the ranges $\mathcal{R}_1,
        \widetilde{\mathcal{R}}_2, \widetilde{\mathcal{R}}_3$ and
        $\widetilde{\mathcal{R}}_3'$ of allowed indexes $p,q$ we refer
        the reader to the original paper~\cite{KLS20} (Definition 1.1
        and Figure 1 and 2 there); we give a few comments on their
        result here. The region $\mathcal{R}_1$ is represented by $p, q$ such that
	\begin{equation}\label{eq:Gutierrez}
		\frac{2}{d+1}\leq \frac{1}{p}-\frac{1}{q}\leq \frac{2}{d}, 
		\qquad \frac{1}{p}>\frac{d+1}{2d},
		\qquad \frac{1}{q}<\frac{d-1}{2d},
	\end{equation}
	and $\frac{1}{p}-\frac{1}{q}\neq \frac{2}{d}$ if $d=2.$
	In particular, the duality line $\frac{1}{p} + \frac{1}{p'}=1$
        restricted to $\frac{2d}{d+2}\leq p\leq \frac{2(d+1)}{d+3}$ of
        estimate~\eqref{Lame_res_1} is contained in the range
        $\mathcal{R}_1.$ Notice that if $p,q$
        satisfies~\eqref{eq:Gutierrez}, then $\gamma_{p,q}$ in
        \Cref{thm:KLS} is zero, that is, within $\mathcal{R}_1,$ $N(z)$ depends only on $|z|,$ whereas outside $\mathcal{R}_1$ it also depends on the distance from the spectrum $\sigma(-\Delta^\ast)=[0,\infty).$ Thus, in light of Remark~\ref{rmk:two-steps} above, outside $\mathcal{R}_1,$ since $N(z)$ becomes singular as $z$ approaches the positive real axis, Corollary~\ref{cor:KLS} cannot be improved to cover also possible embedded eigenvalues $z\in (0,\infty).$ 
	Finally, notice that the range $\mathcal{R}_1$ allows for a larger collection of indexes $p,q$ than just the self-dual case $p, p'.$ On the other hand, as long as one is interested in finding bounds on the location of eigenvalues in terms of norms of the potential, considering the whole range $\mathcal{R}_1$ (instead of just the duality line $1/p+1/p'=1$) does no provide with more information. Indeed, in this context, it is the local integrability/asymptotic behavior of the potential $V$ that matters, or better for which class of potential such a bound holds true, in other words one takes into account not the pair $(1/p, 1/q)$ but rather the difference $1/r:=1/p-1/q$ (notice that $pq/(q-p)=r$ and $V\in L^r$).

\subsection{Proof of
  \Cref{thm:Lp-result}, \Cref{thm:MC-result} and \Cref{thm:KS-result}}\label{sec:proofs-sub}
  In this section we use an adaptation of the Birman-Schwinger principle
  in the spirit of the proof provided above.
  Nonetheless, the proof presented here  allows to treat at
  a time the $L^p$ framework as well as the Morrey-Campanato and the Kerman-Sawyer setting.

Let $z\in \C$ be an eigenvalue of $-\Delta^\ast + V$ in $L^2(\R^d)^d$
with corresponding eigenfunction $u\in H^1(\R^d)^d.$ We first consider
the easiest case of eigenvalues outside the spectrum of
$-\Delta^\ast,$ namely $z\in \C\setminus [0,\infty).$ In this
situation the standard Birman-Schwinger principle applies: if $z\in
\C\setminus [0,\infty)$ is an eigenvalue of the perturbed Lamé
operator $-\Delta^\ast+V$ with corresponding eigenfunction $u\in H^1(\R^d)^d,$ then
$-1$ is an eigenvalue of the Birman-Schwinger operator
$K_z:=|V|^{1/2}(-\Delta^\ast-z)^{-1}V_{1/2}$ with eigenvector
$\phi:=|V|^{1/2}u \in L^2(\R^d)^d$
(see, for example, Thm. III.12 and Thm. III.14 in~\cite{Simon71}).
Notice that $\phi:=|V|^{1/2}u\in L^2(\R^d)^d$ for any $u\in
H^1(\R^d)^d$ by Lemma~\ref{lemma:Frank}.
Since	$\phi=-|V|^{1/2}(-\Delta^\ast -z)^{-1} V_{1/2}\phi,$
using the bounds in~\Cref{lemma_BS-estimates} we get 
	\begin{equation*}
		\begin{split}
			\|\phi\|_{L^2(\R^d)^d}
			&\leq \||V|^{1/2}(-\Delta^\ast -z)^{-1} V_{1/2}\|_{L^2\to L^2}\|\phi\|_{L^2(\R^d)^d}\\
			&\leq c_{\gamma,d, \lambda, \mu} |z|^{-\frac{2\gamma}{2\gamma + d}} \|V\| \|\phi\|_{L^2(\R^d)^d},
		\end{split}
	\end{equation*}
	where $\|V\|$ denotes $\|V\|_{L^{\gamma + \frac{d}{2}}(\R^d)},$ $\|V\|_{\mathcal{L}^{\alpha, p}(\R^d)}$ or $Q_2(|V|)^2\, \||V|^\beta\|_{\mathcal{KS}_\alpha(\R^d)}^\frac{1}{\beta}$ depending on which operator estimate from Lemma~\ref{lemma_BS-estimates} we used to bound the norm of the Birman-Schwinger operator, namely~\eqref{BS-Lp},~\eqref{BS-MC} or~\eqref{BS-KS}, respectively. This gives the proof of Theorems~\ref{thm:Lp-result}--\ref{thm:KS-result} for $z\in \C\setminus [0,\infty).$

Now, let $z\in [0,\infty).$ Observe that for any $\varepsilon>0$ and $z\in[0,\infty)$ the operator  $(-\Delta^\ast-z-i\varepsilon)^{-1}$ is well defined. The approximating eigenfunction
$u_\varepsilon:=(-\Delta^\ast-z-i\varepsilon)^{-1}(-\Delta^\ast-z)u,$
 satisfies the corresponding problem 
\begin{equation*}
	(-\Delta^\ast -z-i\varepsilon)u_\varepsilon =-Vu.
\end{equation*} 
Defining the auxiliary functions $\phi:=|V|^{1/2}u$ and $\phi_\varepsilon:=|V|^{1/2}u_\varepsilon$ one easily gets the following identity
\begin{equation*}
  \phi_\varepsilon= - |V|^{1/2}(-\Delta^\ast -z-i\varepsilon)^{-1} V_{1/2}\phi.
\end{equation*}
Passing to the norms and using~\Cref{lemma_BS-estimates} we get
\begin{equation*}
	\begin{split}
		\|\phi_\varepsilon\|_{L^2(\R^d)^d}
		&\leq \||V|^{1/2}(-\Delta^\ast -z-i\varepsilon)^{-1} V_{1/2}\|_{L^2\to L^2}\|\phi\|_{L^2(\R^d)^d}\\
		&\leq c_{\gamma,d, \lambda, \mu} (|z|^2 + \varepsilon^2)^{-\frac{\gamma}{2\gamma + d}}
	\|V\| \|\phi\|_{L^2(\R^d)^d},
	\end{split}
\end{equation*}
where, as above, $\|V\|$ denotes $\|V\|_{L^{\gamma + \frac{d}{2}}(\R^d)},$ $\|V\|_{\mathcal{L}^{\alpha, p}(\R^d)}$ or $Q_2(|V|)^2\, \||V|^\beta\|_{\mathcal{KS}_\alpha(\R^d)}^\frac{1}{\beta}.$
Thus the theses of \Cref{thm:Lp-result},
\Cref{thm:MC-result} and \Cref{thm:KS-result} follow
letting $\varepsilon$ go to zero as soon as one proves that
$\phi_\varepsilon$ converges to $\phi$ in $L^2(\R^d)^d.$ Notice first
that using the dominated convergence theorem in Fourier space, one
easily checks as in Subsection~\ref{sec:Lp} that
$u_\varepsilon:=(-\Delta^\ast-z-i\varepsilon)^{-1} (-\Delta^\ast-z)u$
converges to $u$ in $H^1(\R^d)^d.$ Then the convergence of
$\phi_\varepsilon$ to $\phi$ in $L^2(\R^d)^d$ follows as a consequence
the boundedness of $|V|^{1/2}$ as an operator from $H^1(\R^d)^d$ to
$L^2(\R^d)^d$, see Lemma~\ref{lemma:Frank}. 
\qed

\section{Spectral stability in three dimensions}\label{sec:proof-d3}
This section is devoted to the proof of Theorems~\ref{thm:dim3}--\ref{thm:L^p-d3} which show that the spectrum of the perturbed Lamé operator in $d=3$ remains stable under suitable small perturbations (\emph{cfr.}~\eqref{eq:cond-FKV-d3},~\eqref{eq:cond-d3-M-C} and~\eqref{eq:cond-d3-Lp}).

We first prove~\Cref{thm:dim3}:~\Cref{thm:M-Cd3} and~\Cref{thm:L^p-d3}
are obtained with minor modifications of the argument.
The proof of \Cref{thm:dim3} 
follows the strategy developed in~\cite{FKV18} to prove the analogous
result for three dimensional Schr\"odinger operators
and it will be obtained as a consequence of some preliminary results which are contained in the following subsections. The final proof of Theorems~\ref{thm:dim3}, and then of~\Cref{thm:M-Cd3} and~\Cref{thm:L^p-d3}, can be found in~\Cref{sub:dim=3}.

In the following lemma we show that under the
assumption~\eqref{eq:cond-FKV-d3}
the Birman-Schwinger operator $K_z$ is bounded with bound strictly
less than one, using the explicit formula~\eqref{eq:integral-kernel} for the Green function $\mathcal{G}_z(x,y)$ of $-\Delta-z$.
\begin{lemma}\label{lemma:bdd-K_z}
	Let $d=3$ and assume~\eqref{eq:cond-FKV-d3}. Then there exists a positive constant $\mathfrak{a}<1$ such that
	\begin{equation}\label{eq:K_z-bound-frak}
          \|K_z\|_{L^2(\R^3)^3 \to L^2(\R^3)^3}\leq \mathfrak{a},
		\qquad \text{for all}\; z\in \C\setminus (0,\infty).
	\end{equation}
\end{lemma}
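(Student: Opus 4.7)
The plan is to mimic the proof of \eqref{BS-MC-d=3} inside Lemma~\ref{lemma_BS-estimates-d=3}, replacing the explicit Morrey--Campanato bound by the abstract subordination condition \eqref{eq:cond-FKV-d3}. More precisely, I would test $\langle f, K_z g\rangle$ against arbitrary $f, g \in L^2(\R^3)^3$ and exploit three ingredients: the explicit resolvent identity \eqref{eq:resolvent} coming from the Helmholtz decomposition, the pointwise Green-function bound $|\mathcal{G}_\zeta(x,y)| \le \mathcal{G}_0(x,y)$ valid for $\zeta \in \C \setminus (0,\infty)$, and the equivalence between \eqref{eq:cond-FKV-d3} and the operator bound $\||V|^{1/2}H_0^{-1/2}\|_{L^2 \to L^2} \le \sqrt{a}$ (cf.\ \eqref{equiv}--\eqref{adjoint}).

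Concretely, write $G := V_{1/2} g$ and decompose $G = G_{\textup{S}} + G_{\textup{P}}$. Applying \eqref{eq:resolvent} yields
\begin{equation*}
|\langle f, K_z g\rangle| \le \frac{1}{\mu}\bigl|\langle f, |V|^{1/2}(-\Delta - z/\mu)^{-1} G_{\textup{S}}\rangle\bigr| + \frac{1}{\lambda+2\mu}\bigl|\langle f, |V|^{1/2}(-\Delta - z/(\lambda+2\mu))^{-1} G_{\textup{P}}\rangle\bigr|.
\end{equation*}
Since $z \in \C \setminus (0,\infty)$, so are $z/\mu$ and $z/(\lambda+2\mu)$, and the pointwise bound on the Green function allows me to dominate each term by the corresponding object with $H_0^{-1}$ in place of the resolvent. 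Factoring $H_0^{-1} = H_0^{-1/2} H_0^{-1/2}$ and invoking \eqref{equiv}--\eqref{adjoint} gives, for each component,
\begin{equation*}
\bigl|\langle f, |V|^{1/2} H_0^{-1} G_{\textup{S}/\textup{P}}\rangle\bigr| \le a\, \|f\|_{L^2(\R^3)^3}\, \|G_{\textup{S}/\textup{P}}\|_{L^2(|V|^{-1}dx)^3}.
\end{equation*}

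Next, I would apply the weighted orthogonality estimate \eqref{eq:orthogonality-2} from Lemma~\ref{lemma:orthogonality} with weight $w = |V|^{-1}$. This is the step where the hypothesis $|V| \in A_2(\R^3)$ (equivalently $|V|^{-1} \in A_2(\R^3)$) is crucial: it ensures the weighted boundedness of the Riesz transform and produces the constant $c_V$ from \eqref{eq:see.lemma}. Combined with $G = V_{1/2} g$ and $|V_{1/2}|^2 = |V|$, one obtains
\begin{equation*}
\|G_{\textup{S}}\|_{L^2(|V|^{-1}dx)^3} + \|G_{\textup{P}}\|_{L^2(|V|^{-1}dx)^3} \le (1 + 6 c_V^2)\, \|g\|_{L^2(\R^3)^3}.
\end{equation*}

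Collecting the two estimates and bounding the prefactor by $1/\min\{\mu, \lambda+2\mu\}$, I get
\begin{equation*}
|\langle f, K_z g\rangle| \le \frac{a(1 + 6 c_V^2)}{\min\{\mu, \lambda+2\mu\}}\, \|f\|_{L^2(\R^3)^3}\, \|g\|_{L^2(\R^3)^3}.
\end{equation*}
Setting $\mathfrak{a} := a(1+6c_V^2)/\min\{\mu,\lambda+2\mu\}$ and invoking the strict inequality in \eqref{eq:cond-FKV-d3} gives $\mathfrak{a} < 1$, uniformly in $z \in \C \setminus (0,\infty)$. There is no serious obstacle: the only delicate point is the coupling between the Helmholtz decomposition and the weighted $L^2$ norm, which is precisely handled by \eqref{eq:orthogonality-2} and forces the extra factor $1 + 6 c_V^2$ appearing in the stronger smallness condition \eqref{eq:cond-FKV-d3} compared to the Schr\"odinger analogue in \cite{FKV18}.
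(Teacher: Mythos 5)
Your proposal is correct and follows essentially the same route as the paper: the paper's proof of Lemma~\ref{lemma:bdd-K_z} simply invokes ``the same strategy as the proof of Lemma~\ref{lemma_BS-estimates-d=3}'' (with $W=V$ and the abstract subordination constant $a$ in place of $c_\textup{KS}\|V\|_{\mathcal{KS}_2}$), which is precisely the chain you spell out — resolvent identity~\eqref{eq:resolvent}, pointwise Green-function bound, factorization through~\eqref{equiv}--\eqref{adjoint}, and the weighted almost-orthogonality~\eqref{eq:orthogonality-2} producing the factor $1+6c_V^2$. The resulting constant $\mathfrak{a}=a(1+6c_V^2)/\min\{\mu,\lambda+2\mu\}<1$ is exactly the one in the paper.
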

\begin{proof}
	To bound the operator norm of $K_z$ we estimate the inner
        product $\langle f, K_z g \rangle,$ for any $f,g \in
        L^2(\R^3)^3.$ Using the same strategy of the proof of \Cref{lemma_BS-estimates-d=3} one has 
	\begin{equation*}
		|\langle f, K_z g \rangle|\leq a \frac{1+ 6c_V^2}{\min\{\mu, \lambda + 2\mu\}} \|f\|_{L^2(\R^3)^3} \|g\|_{L^2(\R^3)^3}.
	\end{equation*}
	Thanks to~\eqref{eq:cond-FKV-d3}, we get the thesis
        for $\mathfrak{a}:=a (1+ 6c_V^2)/\min\{\mu, \lambda +
        2\mu\} <1$.
\end{proof}

\subsection{Absence of eigenvalues}  
As a starting point we observe that under the assumption~\eqref{eq:cond-FKV-d3}, Corollary~\ref{cor:KS} ensure the absence of the point spectrum, more precisely we have the following result. 

\begin{proposition}[Absence of eigenvalues]\label{thm:absence-evs-d3}
	Let $d=3$ and assume~\eqref{eq:cond-FKV-d3}. Then $\sigma_\textup{p}(-\Delta^\ast+V)=\varnothing.$
	\begin{proof}
		Taking into account~\Cref{rmk:d=3-KS} the proposition is an easy consequence of Corollary~\ref{cor:KS}. 
	\end{proof}
\end{proposition}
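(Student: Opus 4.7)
The plan is to reduce the proposition to a direct application of \Cref{cor:KS} in the case $d=3$, after using the characterization provided by \Cref{rmk:d=3-KS} to translate the subordination assumption~\eqref{eq:cond-FKV-d3} into an equivalent Kerman--Sawyer smallness condition.

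First, I would invoke \Cref{rmk:d=3-KS}: the Hardy-type inequality in~\eqref{eq:cond-FKV-d3} is equivalent to requiring $V\in\mathcal{KS}_2(\R^3)$, and in this case the best constant $a$ is explicitly $a=c_\textup{KS}\|V\|_{\mathcal{KS}_2(\R^3)}$ (recall~\eqref{eq:d=3-a}). Consequently, the hypothesis $a<\min\{\mu,\lambda+2\mu\}/(1+6c_V^2)$ is equivalent to
\begin{equation*}
c_\textup{KS}(1+6c_V^2)\,\|V\|_{\mathcal{KS}_2(\R^3)}<\min\{\mu,\lambda+2\mu\},
\end{equation*}
with $c_V=CQ_2(|V|)$ by~\eqref{eq:see.lemma}. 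In particular $V\in\mathcal{KS}_2(\R^3)$.

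Next I would rewrite the last inequality in the form appearing in the smallness condition of \Cref{cor:KS}. Raising both sides to the power $3/2$ and rearranging yields
\begin{equation*}
\Bigl(\frac{c_\textup{KS}(1+6C^2)}{\min\{\mu,\lambda+2\mu\}}\Bigr)^{\!3/2}Q_2(|V|)^3\,\|\,|V|\,\|_{\mathcal{KS}_2(\R^3)}^{3/2}<1,
\end{equation*}
which is exactly $c_{0,3,\lambda,\mu}\,Q_2(|V|)^3\,\|\,|V|\,\|_{\mathcal{KS}_2(\R^3)}^{3/2}<1$ with the explicit constant from \Cref{cor:KS}, recalling that $\beta=(d-1)/2=1$ and $\alpha=d-1=2$ when $d=3$.

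Finally, the remaining hypothesis of \Cref{cor:KS} is $|V|\in A_2(\R^3)$, which is assumed in \Cref{thm:dim3} and is inherited by the present proposition. Hence \Cref{cor:KS} applies and gives $\sigma_\textup{p}(-\Delta^\ast+V)=\varnothing$, as claimed. The only delicate point is the bookkeeping of constants across the equivalences (matching the $(1+6c_V^2)$ factor coming from the Helmholtz decomposition with the $Q_2(|V|)$ dependence in \Cref{cor:KS}), but this is routine given~\eqref{eq:see.lemma} and \Cref{rmk:d=3-KS}; no new analytical input is required.
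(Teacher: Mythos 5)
Your route is the same as the paper's (the paper's proof is literally ``by \Cref{rmk:d=3-KS} this follows from \Cref{cor:KS}''), but the one step where you make the reduction explicit --- ``raising both sides to the power $3/2$ and rearranging'' --- is not correct, and it hides a genuine mismatch of constants. By \Cref{rmk:d=3-KS}, assumption~\eqref{eq:cond-FKV-d3} is equivalent to $V\in\mathcal{KS}_2(\R^3)$ together with
\begin{equation*}
c_{\textup{KS}}\bigl(1+6c_V^2\bigr)\,\|V\|_{\mathcal{KS}_2(\R^3)}
= c_{\textup{KS}}\bigl(1+6C^2Q_2(|V|)^2\bigr)\,\|V\|_{\mathcal{KS}_2(\R^3)}
<\min\{\mu,\lambda+2\mu\},
\end{equation*}
whereas the smallness condition displayed in \Cref{cor:KS} for $d=3$ unravels to
\begin{equation*}
c_{\textup{KS}}\bigl(1+6C^2\bigr)\,Q_2(|V|)^2\,\|V\|_{\mathcal{KS}_2(\R^3)}<\min\{\mu,\lambda+2\mu\}.
\end{equation*}
Since $Q_2(w)\geq 1$ for every $A_2$ weight (Cauchy--Schwarz), one has $(1+6C^2)Q_2(|V|)^2\geq 1+6C^2Q_2(|V|)^2$, with strict inequality whenever $Q_2(|V|)>1$. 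So the corollary's hypothesis is the \emph{stronger} of the two, and the implication you need (from~\eqref{eq:cond-FKV-d3} to the corollary's condition) fails in general: if $Q_2(|V|)$ is large and $\|V\|_{\mathcal{KS}_2}$ is chosen so that the first inequality barely holds, the second can be violated. Your claim that the two conditions are related by mere ``rearranging'' conflates $1+6C^2Q_2^2$ with $(1+6C^2)Q_2^2$.

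The repair is immediate and is what the paper's terse proof really means: do not invoke the \emph{statement} of \Cref{cor:KS}, but its \emph{proof}. Under~\eqref{eq:cond-FKV-d3}, the Birman--Schwinger bound~\eqref{BS-KS-d=3} (equivalently \Cref{lemma:bdd-K_z}) gives precisely $\|K_z\|\leq \mathfrak{a}:=a(1+6c_V^2)/\min\{\mu,\lambda+2\mu\}<1$ uniformly in $z\in\C\setminus(0,\infty)$, with the constant $(1+6c_V^2)$ matching your hypothesis exactly; feeding this into the Birman--Schwinger argument of \Cref{sec:proofs-sub} (an eigenvalue forces $\|\phi\|\leq\|K_{z+i\varepsilon}\|\,\|\phi\|$ in the limit $\varepsilon\to 0$, hence $\|K\|\geq 1$ unless $\phi=|V|^{1/2}u=0$, in which case $u$ would be an $L^2$ eigenfunction of the free Lam\'e operator) yields $\sigma_{\textup{p}}(-\Delta^\ast+V)=\varnothing$. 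With that substitution your argument is complete; as written, the constant-matching step does not go through.
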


The next step we accomplish is to show the absence of the continuous spectrum outside $[0,\infty).$
	\subsection{Absence of the continuous spectrum outside \texorpdfstring{$[0,\infty)$}{infty}}
	We need the following lemma which is valid for any dimension $d\geq 3.$
	\begin{lemma}\label{lemma:singular-sequences}		
	Let $d\geq 3$ and assume~\eqref{eq:cond-FKV-d3}. If $\|(-\Delta^\ast + V)u_n -zu_n\|_{L^2(\R^d)^d}\to 0$ as $n\to \infty$ with some $z\in \C \setminus \R$ and $\{u_n\}_{n\in \N}\subset H^1(\R^d)^d$ such that $\|u_n\|_{L^2(\R^d)^d}=1$ for all $n\in \N,$ then $\phi_n:=|V|^{1/2}u_n$ obeys
\begin{equation*}
  \lim_{n\to \infty} \frac{\langle \phi_n, K_z \phi_n \rangle}{\|\phi_n\|_{L^2(\R^d)^d}^2}=-1.
\end{equation*}
\begin{proof}
Given $z\in \C\setminus \R$ and using the explicit representation of the resolvent given in~\eqref{eq:resolvent} one has 
		\begin{equation}\label{eq:I+II}
			\begin{split}
				\langle \varphi_n, K_{z} \phi_n \rangle
				&=\langle \varphi_n, |V|^{1/2}(-\Delta^\ast -z)^{-1}V_{1/2}\phi_n \rangle
				=\langle \varphi_n, |V|^{1/2}(-\Delta^\ast -z)^{-1}Vu_n \rangle\\
				&=\frac{1}{\mu} \langle \varphi_n, |V|^{1/2}(-\Delta -\tfrac{z}{\mu})^{-1}(Vu_n)_\textup{S} \rangle
				+\frac{1}{\lambda + 2\mu} \langle \varphi_n, |V|^{1/2}(-\Delta -\tfrac{z}{\lambda+ 2\mu})^{-1}(Vu_n)_\textup{P}\rangle\\
				&=I+II.
			\end{split}
		\end{equation}
	We consider only $I$ as $II$ can be treated similarly.

	Defining $F_n= F_{n,\textup{S}}+ F_{n,\textup{P}}:=Vu_n$ we have
	\begin{equation}\label{eq:I}
		\begin{split}
			I&:=\frac{1}{\mu}\langle \varphi_n, |V|^{1/2}(-\Delta -\tfrac{z}{\mu})^{-1}F_{n,\textup{S}}\rangle\\
			&=\frac{1}{\mu}\iint_{\R^3\times \R^3} \overline{\varphi_n(x)}|V|^{1/2}(x) \mathcal{G}_{\tfrac{z}{\mu}}(x,y) F_{n,\textup{S}}(y)\, dx\, dy\\
			&=\frac{1}{\mu} \int_{\R^3} \eta_{n, \mu}(y)F_{n,\textup{S}}(y)\, dy,
		\end{split}
	\end{equation}
	where 
	\begin{equation}\label{eq:eta}
		\eta_{n,\mu}:=\int_{\R^3} \mathcal{G}_{\tfrac{z}{\mu}}(x,\cdot)|V|^{1/2}(x)\overline{\varphi_n(x)}\, dx=(-\Delta - \tfrac{z}{\mu})^{-1}|V|^{1/2}\overline{\varphi_n},
	\end{equation}
	where the second equality holds due to the symmetry $\mathcal{G}_\zeta(x,y)=\mathcal{G}_\zeta(y,x).$
	
	The analogous computations for $II$ give
	\begin{equation}\label{eq:II}
		II=\frac{1}{\lambda + 2\mu} \int_{\R^3} \eta_{n, \lambda + 2\mu}(y)F_{n,\textup{P}}(y)\, dy,
	\end{equation}
	where $\eta_{n, \lambda + 2\mu}$ is defined analogously to $\eta_{n,\mu}$ in~\eqref{eq:eta}.
	
	Using~\eqref{eq:I} and~\eqref{eq:II} in~\eqref{eq:I+II} gives 
  \begin{equation}\label{eq:integral-n}
    \langle \phi_n, K_z \phi_n \rangle
    =\frac{1}{\mu} \int_{\R^d} \eta_{n,
      \mu}(y) F_{n,\textup{S}}(y)\,dy
    + \frac{1}{\lambda + 2 \mu} \int_{\R^d} \eta_{n, \lambda + 2\mu}(y) F_{n,\textup{P}}(y)\,dy.
  \end{equation}

Notice that
  $\eta_{n,\mu},\eta_{n,\lambda + 2\mu} \in  H^1(\R^d)^d.$
  Indeed, writing $H_0:=-\Delta,$
  we have
			\begin{equation}\label{eq:eta_n_mu}
                          \eta_{n, \mu}=(H_0- z/\mu)^{-1} H_0^{1/2} H_0^{-1/2} |V|^{1/2} \overline{\phi_n},
			\end{equation}				
			since $\phi_n\in L^2(\R^d)^d$ by~\eqref{eq:cond-FKV-d3}, $H_0^{-1/2}|V|^{1/2}$ is bounded due to~\eqref{equiv} and $(H_0- z/\mu)^{-1} H_0^{1/2}$ maps $L^2(\R^d)^d$ to $H^1(\R^d)^d,$ one has 
			\begin{equation}\label{eq:eta-norm}
				\|\eta_{n,\mu}\|_{L^2(\R^d)^d}\leq C_{z/\mu} \sqrt{a} \|\phi_n\|_{L^2(\R^d)^d},
				\quad \text{where} \quad
				C_{z/\mu}:=\sup_{\xi\in [0,\infty)} \left| \frac{\xi}{\xi^2-z/\mu}\right|.
			\end{equation}
			Due to the $L^2$-orthogonality of the $S$ and
                        $P$ component of the Helmholtz decomposition and using that the projection into the $S$ and
                        $P$ components commutes with the Laplacian (\emph{cfr.}~\eqref{eq:Helmholtz-Riesz}),
                        from $\|(-\Delta^\ast + V) u_n
                        -zu_n\|_{L^2(\R^d)^d}\to 0,$ we get
			\begin{equation}\label{eq:convergence}
				\|(-\Delta -z/\mu) (u_n)_\textup{S} +\tfrac{1}{\mu} F_{n,\textup{S}}\|_{L^2(\R^d)^d}\to 0,
				\quad \text{and} \quad
				\|(-\Delta -z/(\lambda + 2\mu)) (u_n)_\textup{P} +\tfrac{1}{\lambda + 2\mu} F_{n,\textup{P}}\|_{L^2(\R^d)^d}\to 0.
			\end{equation}
			Let us define the following quantities
			\begin{equation*}
				R_\mu:=\langle \nabla \overline{\eta_{n, \mu}}, \nabla (u_n)_\textup{S}\rangle - \frac{z}{\mu} \langle \overline{\eta_{n, \mu}}, (u_n)_\textup{S} \rangle,
				\qquad
				R_{\lambda + 2\mu}:=\langle \nabla \overline{\eta_{n, \lambda+ 2\mu}}, \nabla (u_n)_\textup{P}\rangle - \frac{z}{\lambda + 2\mu} \langle \overline{\eta_{n, \lambda + 2\mu}}, (u_n)_\textup{P} \rangle.
			\end{equation*}
			Thanks to~\eqref{eq:eta_n_mu}, we have
			\begin{equation*}
				\begin{split}
					R_\mu
					&=\langle \nabla \overline{ (u_n)_\textup{S}}, \nabla \eta_{n, \mu}\rangle - \frac{z}{\mu} \langle \overline{(u_n)_\textup{S}}, \eta_{n, \mu} \rangle\\
					&=\langle H_0^{1/2} \overline{ (u_n)_\textup{S}}, H_0^{1/2} (H_0-z/\mu)^{-1} H_0^{1/2} H_0^{-1/2} |V|^{1/2}\overline{\phi_n}\rangle - \frac{z}{\mu} \langle \overline{(u_n)_\textup{S}}, \eta_{n, \mu} \rangle\\
					&=\langle H_0^{1/2} \overline{(u_n)_\textup{S}}, H_0^{-1/2} |V|^{1/2} \overline{\phi_n}\rangle\\
					&\phantom{=}+\frac{z}{\mu} \langle  H_0^{1/2} \overline{(u_n)_\textup{S}}, (H_0-z/\mu)^{-1} H_0^{-1/2}|V|^{1/2} \overline{\phi_n}\rangle
					  - \frac{z}{\mu} \langle \overline{(u_n)_\textup{S}}, \eta_{n, \mu} \rangle\\
					  &=\langle H_0^{1/2} \overline{(u_n)_\textup{S}}, H_0^{-1/2} |V|^{1/2} \overline{\phi_n}\rangle\\
					  &=\langle (H_0^{-1/2} |V|^{1/2})^\ast H_0^{1/2} \overline{(u_n)_\textup{S}},  \overline{\phi_n}\rangle\\
					  &=\langle |V|^{1/2} \overline{(u_n)_\textup{S}}, \overline{\phi_n}\rangle. 
				\end{split}
			\end{equation*}
			Similar computations for $R_{\lambda + 2\mu}$ give
			\begin{equation*}
				R_{\lambda + 2\mu}
				=\langle |V|^{1/2} \overline{(u_n)_\textup{P}}, \overline{\phi_n}\rangle. 
			\end{equation*}
			Adding and subtracting the quantities $R_\mu$ and $R_{\lambda + 2\mu}$ to~\eqref{eq:integral-n} and noticing that 
			\begin{equation*}
				R_\mu + R_{\lambda + 2\mu}=\langle |V|^{1/2} \overline{(u_n)_\textup{S}}, \overline{\phi_n}\rangle+\langle |V|^{1/2} \overline{(u_n)_\textup{P}}, \overline{\phi_n}\rangle=\langle |V|^{1/2} \overline{(u_n)}, \overline{\phi_n}\rangle=\| \phi_n\|_{L^2(\R^d)^d}^2,
				\end{equation*} 
			one has
			\begin{equation}\label{eq:before}
					\langle \phi_n, K_z \phi_n \rangle
				= R_\mu + \frac{1}{\mu} \int_{\R^d} \eta_{n, \mu}(y) F_{n,\textup{S}}(y)\,dy
				+R_{\lambda+2\mu} + \frac{1}{\lambda + 2 \mu} \int_{\R^d} \eta_{n, \lambda + 2\mu}(y) F_{n,\textup{P}}(y)\,dy
				-\| \phi_n\|_{L^2(\R^d)^d}^2				
			\end{equation}
Since
\begin{equation*}
  \begin{split}
    \|(-\Delta^\ast + V)u_n -zu_n\|_{L^2(\R^d)^d}
    &=\sup_{\substack{\varphi\in L^2(\R^d)^d\\ \varphi \neq 0}} 
    \frac{\langle \varphi, (-\Delta^\ast + V)u_n -zu_n \rangle}{\|\varphi\|_{L^2(\R^d)^d}}\\
    &\geq |\mu \|\nabla u_{n,\textup{S}}\|_{L^2(\R^d)^d}^2 + (\lambda + 2\mu)\|\nabla u_{n,\textup{P}}\|_{L^2(\R^d)^d} + v[u_n]-z|,
  \end{split}
\end{equation*}	
	where the inequality is obtained choosing $\varphi=u_n,$ and the left-hand side vanishes as $n$ goes to infinity, we have $\Im v[u_n]$ tends to $\Im z\neq 0$ as $n$ goes to infinity. In particular, $\liminf_{n\to \infty} \|\phi_n\|_{L^2(\R^d)^d}>0.$			
	
From~\eqref{eq:before} one has
			\begin{equation*}
				\begin{split}
				\frac{\langle \phi_n, K_z \phi_n \rangle}{\|\phi_n\|_{L^2(\R^d)^d}^2}
				&=\frac{1}{\|\phi_n\|_{L^2(\R^d)^d}^2} \Big[ R_\mu + \frac{1}{\mu} \int_{\R^d} \eta_{n, \mu}(y) F_{n,\textup{S}}(y)\,dy \Big]\\
				&\phantom{=}+\frac{1}{\|\phi_n\|_{L^2(\R^d)^d}^2} \Big[ R_{\lambda+2\mu} + \frac{1}{\lambda + 2 \mu} \int_{\R^d} \eta_{n, \lambda + 2\mu}(y) F_{n,\textup{P}}(y)\,dy\Big]
				-1	\\
				&=I+II-1.
				\end{split}
			\end{equation*}
		Now we show that $I$ and $II$ tend to zero as $n$ goes to infinity. Using the explicit expressions for $R_\mu$ and  $R_{\lambda +2\mu}$ and estimate~\eqref{eq:eta-norm}, one has
		\begin{equation*}
			\begin{split}
				|I|&=\frac{|\langle\overline{\eta_{n,\mu}}, (-\Delta -z/\mu) (u_n)_\textup{S} + \tfrac{1}{\mu} F_{n,\textup{S}}\rangle|}{\|\phi_n\|_{L^2(\R^d)^d}^2}
				\leq \frac{\|\eta_{n,\mu}\|_{L^2(\R^d)^d}\|(-\Delta -z/\mu) (u_n)_\textup{S} + \tfrac{1}{\mu} F_{n,\textup{S}} \|_{L^2(\R^d)^d}}{\|\phi_n\|_{L^2(\R^d)^d}^2}\\
				&\leq C_{z/\mu} \sqrt{a} \frac{\|(-\Delta -z/\mu) (u_n)_\textup{S} + \tfrac{1}{\mu} F_{n,\textup{S}} \|_{L^2(\R^d)^d}}{\|\phi_n\|_{L^2(\R^d)^d}}. 
			\end{split}
		\end{equation*} 
		Since $\liminf_{n\to
                  \infty}\|\phi_n\|_{L^2(\R^d)^d}>0$ and using~\eqref{eq:convergence} we conclude that the right hand side tends to zero as $n$ goes to infinity. Analogous computations show that also $II$ vanishes as $n\to \infty.$ This yields
		\begin{equation*}
			\lim_{n\to \infty} \frac{\langle \phi_n, K_z \phi_n \rangle}{\|\phi_n\|_{L^2(\R^d)^d}^2}=-1
		\end{equation*}
			and then the proof is concluded.
		\end{proof}
	\end{lemma}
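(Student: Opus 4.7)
The plan is to adapt the standard Birman--Schwinger principle to this approximate setting. I set $r_n := (-\Delta^\ast + V)u_n - zu_n$, so that $r_n \to 0$ in $L^2(\R^d)^d$ by hypothesis. Since $z\in\C\setminus\R$ lies in the resolvent set of the self-adjoint operator $-\Delta^\ast$, I may apply $(-\Delta^\ast - z)^{-1}$ to the equation $(-\Delta^\ast - z)u_n = -Vu_n + r_n$ and then multiply on the left by $|V|^{1/2}$. Using the factorization $Vu_n = V_{1/2}\phi_n$ built into the definition of $K_z$, this yields
\begin{equation*}
	\phi_n + K_z \phi_n = e_n, \qquad e_n := |V|^{1/2}(-\Delta^\ast - z)^{-1} r_n.
\end{equation*}
Pairing with $\phi_n$ in $L^2(\R^d)^d$ and dividing by $\|\phi_n\|_{L^2(\R^d)^d}^2$ gives
\begin{equation*}
	\frac{\langle \phi_n, K_z \phi_n\rangle}{\|\phi_n\|_{L^2(\R^d)^d}^2} = -1 + \frac{\langle \phi_n, e_n\rangle}{\|\phi_n\|_{L^2(\R^d)^d}^2},
\end{equation*}
so the whole claim reduces to showing that the last fraction vanishes as $n \to \infty$.

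For this, I would establish a positive lower bound on $\|\phi_n\|_{L^2}$ and the convergence $e_n \to 0$ in $L^2(\R^d)^d$. For the lower bound, testing the approximate eigenvalue equation against $u_n$ (using $\|u_n\|_{L^2} = 1$) produces $h_0[u_n] + v[u_n] - z = \langle u_n, r_n\rangle$; since $h_0$ is real on $H^1(\R^d)^d$, taking imaginary parts yields $\Im v[u_n] = \Im z + \Im\langle u_n, r_n\rangle \to \Im z \neq 0$, and the trivial estimate $|\Im v[u_n]| \leq \int_{\R^d} |V||u_n|^2\,dx = \|\phi_n\|_{L^2}^2$ gives $\liminf_n \|\phi_n\|_{L^2}^2 \geq |\Im z| > 0$. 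For the convergence of $e_n$, since $z \in \rho(-\Delta^\ast)$ the resolvent is bounded from $L^2(\R^d)^d$ to $H^1(\R^d)^d$, hence $\|(-\Delta^\ast - z)^{-1} r_n\|_{H^1(\R^d)^d} \to 0$; combined with \Cref{lemma:Frank}, which provides the boundedness of $|V|^{1/2}\colon H^1(\R^d)^d \to L^2(\R^d)^d$ under the hypothesis on $V$, this yields $\|e_n\|_{L^2(\R^d)^d} \to 0$. Cauchy--Schwarz then closes the argument.

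The delicate point is the lower bound on $\|\phi_n\|_{L^2}$: the argument relies crucially on $\Im z \neq 0$, which is precisely what is assumed, and the same strategy would fail for sequences with $z \in (0,\infty)$. The authors' proof below reaches the conclusion through a more computational route, unfolding the resolvent via the Helmholtz decomposition~\eqref{eq:resolvent} and performing explicit integration by parts against the Green function $\mathcal{G}_\zeta$; the abstract route sketched here avoids these manipulations by relying only on the mapping properties of $(-\Delta^\ast - z)^{-1}$ and on the form-subordination hypothesis~\eqref{eq:cond-FKV-d3} through \Cref{lemma:Frank}.
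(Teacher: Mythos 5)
Your proposal is correct, and it reaches the conclusion by a genuinely more streamlined route than the paper. The two pillars are the same in both arguments: (i) the lower bound $\liminf_n\|\phi_n\|_{L^2}^2\geq|\Im z|>0$, obtained by testing the approximate eigenvalue equation against $u_n$ and taking imaginary parts — this step is essentially identical to the paper's; and (ii) the identification of $\langle\phi_n,K_z\phi_n\rangle+\|\phi_n\|^2$ as an error term controlled by $\|r_n\|\to0$. Where you differ is in how (ii) is executed. You derive the approximate Birman--Schwinger identity $\phi_n+K_z\phi_n=e_n$ with $e_n=|V|^{1/2}(-\Delta^\ast-z)^{-1}r_n$ abstractly, using only that $(-\Delta^\ast-z)^{-1}$ maps $L^2$ boundedly into $H^1$ for $z\notin[0,\infty)$ and that multiplication by $|V|^{1/2}$ is bounded from $H^1$ to $L^2$; one Cauchy--Schwarz then finishes. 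The paper instead unfolds $\langle\phi_n,K_z\phi_n\rangle$ through the Helmholtz decomposition~\eqref{eq:resolvent}, introduces the auxiliary functions $\eta_{n,\mu},\eta_{n,\lambda+2\mu}$ via the Green function, and extracts $-\|\phi_n\|^2$ by an explicit integration by parts (the $R_\mu$, $R_{\lambda+2\mu}$ manipulations); its error terms $\langle\overline{\eta_{n,\mu}},(-\Delta-z/\mu)(u_n)_{\textup{S}}+\tfrac1\mu F_{n,\textup{S}}\rangle$ are exactly the S- and P-components of your $\langle\phi_n,e_n\rangle$. Your version is shorter, avoids the Green-function bookkeeping, and makes transparent that the only spectral input is the mapping property of the free resolvent off $[0,\infty)$; the paper's version stays at the concrete level used throughout its Section~4 and produces the explicit constant $C_{z/\mu}\sqrt a$. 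One small correction: you do not need \Cref{lemma:Frank} here — the hypothesis~\eqref{eq:cond-FKV-d3} itself gives $\||V|^{1/2}u\|_{L^2}^2\leq a\|\nabla u\|_{L^2}^2$ for all $u\in H^1$, which is the boundedness you use (and is also what justifies interpreting $Vu_n$ in $H^{-1}$ so that the resolvent identity holds for $u_n$ merely in the form domain).
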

Now we are in position to prove that there is no continuous spectrum outside $[0,\infty).$
\begin{proposition}\label{thm:absence-continuous}
	Let $d=3$ and assume~\eqref{eq:cond-FKV-d3}. Then $\sigma_\textup{c}(-\Delta^\ast+V)\subset [0,\infty).$
	\begin{proof}
          Consider $\Re h_V[u],$ where $h_V[u]$ is the quadratic form associated with $-\Delta^\ast + V$ (see~\eqref{eq:h_0},\eqref{eq:v}). One has
	\begin{equation*}
		\Re h_V[u]=\mu \int_{\R^3} |\nabla u_\textup{S}|^2\, dx + (\lambda +2\mu) \int_{\R^3} |\nabla u_\textup{P}|^2\, dx 
		+ \Re \int_{\R^3} \overline{Vu}\cdot u\, dx.
              \end{equation*}
              By assumption~\eqref{eq:cond-FKV-d3}, $\Re h_V[u]\geq (\min\{	\mu, \lambda +2\mu\}-a)\|\nabla u\|^2\geq 0$ for all $u\in H^1(\R^3)^3.$ Since $-\Delta^\ast + V$ is m-sectorial, then its spectrum is contained in the right complex half-plane (\emph{cf}.~\cite[Thm. V.3.2]{Kato}). Now, assume by contradiction that there exists $z\in \C$ with $\Re z\geq 0$ and $\Im z \neq 0$ such that $z\in \sigma_\textup{c}(-\Delta^\ast + V).$ Then $z$ belongs to the kind of essential spectrum which is characterized by the existence of a singular sequence of $-\Delta^\ast + V$ corresponding to $z$ (\emph{cf.}~\cite[Thm. IX.1.3]{Ed_Ev}): there exists $\{u_n\}_{n\in \N}\subset H^1(\R^3)^3$ such that $\| u_n\|_{L^2(\R^3)^3}=1$ for all $n\in \N,$ $\|(-\Delta^\ast + V-z)u_n\|_{L^2(\R^3)^3}\to 0$ as $n\to \infty$ and $\{u_n\}_{n\in \N}$ is weakly converging to zero. By Lemma~\ref{lemma:singular-sequences} and~\eqref{eq:K_z-bound-frak}, one has
		  \begin{equation*}
		  	\mathfrak{a}>\|K_z\|\geq \Big |\lim_{n\to \infty} \frac{\langle u_n, K_z u_n\rangle}{\|u_n\|_{L^2(\R^3)^3}^2} \Big|=1,
		  \end{equation*}   
		  which is a contradiction as $\mathfrak{a}<1.$
	\end{proof}
\end{proposition}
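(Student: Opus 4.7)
The plan is to combine m-sectoriality of $-\Delta^\ast + V$ (which confines the spectrum to the closed right half-plane) with a Weyl-type singular sequence characterization of the essential spectrum, and then to derive a contradiction from the bound on the Birman--Schwinger operator given by Lemma~\ref{lemma:bdd-K_z} together with the limiting identity of Lemma~\ref{lemma:singular-sequences}. In particular, the statement will never require any information on the resolvent behaviour on the positive real axis, which is consistent with the statement excluding only the region outside $[0,\infty)$.

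First, I would verify that the form $h_V = h_0 + v$ is sectorial with its numerical range in the closed right half-plane. Using the explicit form~\eqref{eq:h_0} of $h_0$, the orthogonality of the Helmholtz decomposition and assumption~\eqref{eq:cond-FKV-d3}, one has
\begin{equation*}
\Re h_V[u]
= \mu \|\nabla u_\textup{S}\|_{L^2(\R^3)^3}^2
+ (\lambda + 2\mu)\|\nabla u_\textup{P}\|_{L^2(\R^3)^3}^2
+ \Re \int_{\R^3} \overline{Vu}\cdot u\, dx
\geq (\min\{\mu,\lambda+2\mu\} - a)\|\nabla u\|_{L^2(\R^3)^3}^2 \geq 0,
\end{equation*}
for every $u \in H^1(\R^3)^3$. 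By the representation theorem for m-sectorial forms (cf.~\cite[Thm.~V.3.2]{Kato}), this forces $\sigma(-\Delta^\ast + V)\subset \{z\in\C : \Re z\ge 0\}$.

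Next, I would argue by contradiction: assume that there exists $z \in \sigma_\textup{c}(-\Delta^\ast + V)$ with $z \notin [0,\infty)$. By the previous step, $\Re z \geq 0$ and hence $\Im z \neq 0$. Continuous spectrum points belong to the kind of essential spectrum characterized via singular sequences (cf.~\cite[Thm.~IX.1.3]{Ed_Ev}), so there exists $\{u_n\}_{n\in\N}\subset H^1(\R^3)^3$ with $\|u_n\|_{L^2(\R^3)^3}=1$, $u_n \rightharpoonup 0$ weakly in $L^2(\R^3)^3$, and $\|(-\Delta^\ast + V - z)u_n\|_{L^2(\R^3)^3}\to 0$ as $n\to\infty$. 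Setting $\phi_n := |V|^{1/2} u_n$ (which is well-defined in $L^2(\R^3)^3$ thanks to Lemma~\ref{lemma:Frank}), Lemma~\ref{lemma:singular-sequences} yields
\begin{equation*}
\lim_{n\to\infty} \frac{\langle \phi_n, K_z \phi_n\rangle}{\|\phi_n\|_{L^2(\R^3)^3}^2} = -1,
\end{equation*}
while Lemma~\ref{lemma:bdd-K_z} gives $\|K_z\|_{L^2\to L^2}\leq \mathfrak{a}<1$. Combining these,
\begin{equation*}
\mathfrak{a} \geq \|K_z\|_{L^2\to L^2} \geq \Big|\lim_{n\to\infty} \frac{\langle \phi_n, K_z \phi_n\rangle}{\|\phi_n\|_{L^2(\R^3)^3}^2}\Big| = 1,
\end{equation*}
contradicting $\mathfrak{a}<1$.

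The main obstacle is not the logical structure above, which reduces to a contradiction once the two supporting lemmas are invoked, but rather verifying that the singular sequence produced from $z\in\sigma_\textup{c}$ has precisely the properties needed by Lemma~\ref{lemma:singular-sequences}: membership in $H^1(\R^3)^3$, unit $L^2$-norm, and $L^2$-convergence of $(-\Delta^\ast+V-z)u_n$ to zero. Membership in $H^1(\R^3)^3$ is automatic because the m-sectorial operator $-\Delta^\ast+V$ is defined by the form on $H^1(\R^3)^3$; the convergence follows from the standard Weyl characterization. Once these facts are invoked, the argument closes cleanly via the Birman--Schwinger norm bound.
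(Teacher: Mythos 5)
Your proposal is correct and follows essentially the same route as the paper: m-sectoriality of $h_V$ confines the spectrum to the closed right half-plane, a contradiction is then derived for any $z\in\sigma_\textup{c}$ with $\Im z\neq 0$ by combining the singular-sequence characterization of the essential spectrum with Lemma~\ref{lemma:singular-sequences} and the bound $\|K_z\|\leq\mathfrak{a}<1$ from Lemma~\ref{lemma:bdd-K_z}. Your version is in fact slightly more careful, since you correctly test $K_z$ against $\phi_n=|V|^{1/2}u_n$ (invoking Lemma~\ref{lemma:Frank} for its well-definedness) where the paper's final display writes $u_n$.
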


\subsection{Inclusion of \texorpdfstring{$[0,\infty)$}{zeroinfty} in the spectrum}
Now we show that the semi axis $[0,\infty)$ lies in the spectrum. In order to do that we shall use the following criterion.
\begin{lemma}[{\cite[Lemma 4]{FKV18}}]\label{lemma:FKV}
	Let $H$ be an m-sectorial accretive operator in a complex Hilbert space $\mathcal{H}$ which is associated with a densely defined, closed, sectorial) sesquilinear form $h.$ Given $z \in \C,$ assume that there exists a sequence $\{\phi_n\}_{n\in \N}\subset \mathcal{D}(h)$ such that $\|\phi_n\|_{\mathcal{H}}=1$ for all $n\in \N$ and
	\begin{equation}\label{eq:sequence-spectr}
		\sup_{\substack{\psi\in \mathcal{D}(h)\\ \psi \neq 0}} 
		\frac{|h(\phi_n, \psi)-z(\phi_n,\psi)|}{\|\psi\|_{\mathcal{D}(h)}}
		\xrightarrow[n\to \infty]{}
		0,
	\end{equation}
	where $\|\psi\|_{\mathcal{D}(h)}:=\sqrt{\Re h[\psi] +\|\psi\|^2}.$ Then $z\in \sigma(H).$
\end{lemma}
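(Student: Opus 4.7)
The argument will proceed by contradiction: assume $z\notin\sigma(H)$, so that $(H-z)\colon \mathcal{D}(H)\to \mathcal{H}$ admits a bounded inverse and, equivalently, $\bar z$ lies in the resolvent set of the adjoint $H^*$. I will then choose a particular test function $\psi\in\mathcal{D}(h)$, depending on $\phi_n$ and on the resolvent of $H^*$, that converts the ratio in~\eqref{eq:sequence-spectr} into a quantity bounded from below by a fixed positive constant, in contradiction with the assumed convergence to zero.

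The core ingredient is a form-duality identity. Recall that for the m-sectorial operator $H$ one has $\mathcal{D}(h^*)=\mathcal{D}(h)$ and $h^*(u,v)=\overline{h(v,u)}$, and moreover $h^*(\eta,\phi)=(H^*\eta,\phi)$ whenever $\eta\in\mathcal{D}(H^*)$. Given $\xi\in\mathcal{H}$ I set $\psi:=(H^*-\bar z)^{-1}\xi\in\mathcal{D}(H^*)\subset\mathcal{D}(h)$; then
\begin{equation*}
h(\phi_n,\psi)-z(\phi_n,\psi)
=\overline{h^*(\psi,\phi_n)-\bar z(\psi,\phi_n)}
=\overline{((H^*-\bar z)\psi,\phi_n)}
=\overline{(\xi,\phi_n)}.
\end{equation*}
Specializing to $\xi=\phi_n$ and using $\|\phi_n\|_{\mathcal{H}}=1$ yields $|h(\phi_n,\psi)-z(\phi_n,\psi)|=1$.

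It remains to control $\|\psi\|_{\mathcal{D}(h)}$ uniformly in $n$. The bounded invertibility of $H^*-\bar z$ gives $\|\psi\|_{\mathcal{H}}\le\|(H^*-\bar z)^{-1}\|$. For the form-part one exploits $(H^*-\bar z)\psi=\phi_n$, namely $H^*\psi=\phi_n+\bar z\psi$, to compute
\begin{equation*}
\Re h[\psi]=\Re h^*[\psi]=\Re(H^*\psi,\psi)=\Re(\phi_n,\psi)+\Re(z)\,\|\psi\|^2
\le \|\psi\|+|z|\,\|\psi\|^2,
\end{equation*}
where accretivity of $H$ ensures $\Re h[\psi]\ge 0$ so that $\|\psi\|_{\mathcal{D}(h)}$ is a genuine norm. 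Together these bounds give $\|\psi\|_{\mathcal{D}(h)}\le C_{z,H}$ for a constant depending only on $z$ and $H$ but not on $n$. Consequently, restricting the supremum in~\eqref{eq:sequence-spectr} to this specific $\psi$ produces the lower bound $1/C_{z,H}>0$, contradicting the hypothesis.

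The main obstacle is the verification that the resolvent $(H^*-\bar z)^{-1}$ sends $\mathcal{H}$ boundedly into $(\mathcal{D}(h),\|\cdot\|_{\mathcal{D}(h)})$: this is where the m-sectorial/accretive structure is used crucially, both through the equality $\mathcal{D}(h^*)=\mathcal{D}(h)$ and through the nonnegativity of $\Re h$ that makes the form norm meaningful. Everything else is a sesquilinear bookkeeping once the correct test function is identified.
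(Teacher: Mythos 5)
Your proof is correct. The paper itself does not prove this lemma but imports it verbatim from \cite[Lemma 4]{FKV18}, and your argument is essentially the proof given there: argue by contradiction, test \eqref{eq:sequence-spectr} against $\psi_n:=(H^*-\bar z)^{-1}\phi_n$, use the representation theorem for the adjoint form to turn the numerator into $|\overline{(\phi_n,\phi_n)}|=1$, and bound $\|\psi_n\|_{\mathcal{D}(h)}$ uniformly via $\Re h[\psi_n]=\Re(H^*\psi_n,\psi_n)\le\|\psi_n\|+|z|\,\|\psi_n\|^2$ together with the boundedness of the adjoint resolvent. All the ingredients you invoke ($\mathcal{D}(h^*)=\mathcal{D}(h)$, $H^*$ being the operator associated with $h^*$, accretivity making $\|\cdot\|_{\mathcal{D}(h)}$ a norm) are exactly the standard facts from Kato's representation theorems, so there is nothing to add.
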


In the following we 
construct an appropriate sequence $\{\phi_n\}_{n\in \N}$ to apply Lemma~\ref{lemma:FKV} to $H=-\Delta^\ast + V$ and $z\in [0,\infty),$ 
 showing then that
$[0,\infty)\subset \sigma(-\Delta^\ast + V).$ In~\cite{FKV18} the
authors proved the analogous result for the Schr\"odinger operator
taking as $\{\phi_n\}_{n\in \N}$ the standard singular sequence for
the Laplacian.
In order to adapt that construction to this setting, we perform a suitable diagonalization argument operated on the symbol of the Lamé operator.

\begin{lemma}
\label{lemma:diagonalization}
Let $d\geq 3.$ For any $z\in (0,\infty)$ there exists a
classical solution $u\in C^\infty(\R^d)^d$
to
\begin{equation}\label{eq:free-evl}
  -\Delta^\ast u - zu=0
\end{equation}	 
such that $| u (x)| =1$ for all $x\in \R^3$ and its
derivatives are bounded.
\end{lemma}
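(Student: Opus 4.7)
The plan is to exhibit an explicit plane-wave solution and use it to verify all the requirements. The starting observation is that the symbol of the Lamé operator, namely the matrix
\[
M(\xi) := \mu|\xi|^2 I + (\lambda+\mu)\,\xi \otimes \xi, \qquad \xi \in \R^d,
\]
admits a complete spectral decomposition on $\C^d$: every $v \perp \xi$ satisfies $M(\xi)v = \mu|\xi|^2 v$, while $M(\xi)\xi = (\lambda+2\mu)|\xi|^2 \xi$. This is the ``diagonalization'' referred to in the statement, and it corresponds at the symbol level to the splitting underlying the Helmholtz decomposition in Lemma~\ref{lem:helmholtz}, with the transversal eigenspace $\xi^\perp$ associated to $\mu$ and the longitudinal one (spanned by $\xi$) associated to $\lambda+2\mu$.

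Given this, I would test an ansatz of the form $u(x) := v\, e^{i\xi \cdot x}$ with $\xi \in \R^d$ and $v \in \C^d$ constant. A direct computation yields
\[
-\mu \Delta u - (\lambda+\mu)\nabla \div u = \bigl[\mu|\xi|^2 v + (\lambda+\mu)(\xi \cdot v)\,\xi\bigr] e^{i\xi \cdot x} = M(\xi) v\, e^{i\xi \cdot x},
\]
so that $(-\Delta^\ast - z) u = 0$ reduces to the algebraic eigenvalue problem $M(\xi)v = z v$.

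To solve it, fix any $\xi \in \R^d$ with $|\xi|^2 = z/\mu$ (possible since $z>0$) and any unit vector $v \in \C^d$ orthogonal to $\xi$ (possible since $d \geq 3 \geq 2$, so $\xi^\perp$ has real dimension $d-1 \geq 1$). Then $M(\xi)v = \mu|\xi|^2 v = zv$, so $u(x) = v\, e^{i\xi \cdot x}$ is a classical, in fact $C^\infty$, solution of $-\Delta^\ast u - zu = 0$ on $\R^d$. Moreover $|u(x)| = |v|\cdot|e^{i\xi \cdot x}| = 1$ for every $x$, and each partial derivative of order $k$ is bounded in absolute value by $|\xi|^k$, so all derivatives are bounded as required.

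Since the construction is entirely explicit, there is no real obstacle; the only point that needs care is choosing the eigenspace of $M(\xi)$ associated with the eigenvalue $\mu|\xi|^2$ (as opposed to the longitudinal one, which would force the incompatible condition $|\xi|^2 = z/(\lambda+2\mu)$ to hold simultaneously with the transversal condition), together with normalizing $v$. The alternative longitudinal choice $v \parallel \xi$ with $|\xi|^2 = z/(\lambda+2\mu)$ would yield an analogous solution and could be used interchangeably in the subsequent application of Lemma~\ref{lemma:FKV}.
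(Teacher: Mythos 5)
Your proposal is correct and follows essentially the same route as the paper: both diagonalize the symbol of $-\Delta^\ast$ and end up with the transversal plane wave $u(x)=v\,e^{i\xi\cdot x}$ with $|\xi|^2=z/\mu$ and $v\perp\xi$ (the paper's explicit answer $u=(e^{ik\cdot x},0,0)$, $k\parallel e_2$, is exactly this choice). You merely bypass the paper's intermediate step of conjugating $L(\xi)$ by an explicit matrix $P(\xi)$ and passing through a Helmholtz-type system, which makes the argument slightly more streamlined without changing its substance.
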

\begin{proof}
For simplicity of notation, we give a proof in the case that $d=3$. The general case $d\geq 3$
is adapted straightforwardly.

From the explicit form of the Lamé operator $-\Delta^\ast$,
$u$ is solution
to \eqref{eq:free-evl} if and only if its Fourier trasform
 $\widehat{u} := \mathcal{F} u$ satisfies
\begin{equation*}
  L(\xi)\widehat{u}(\xi)-z\widehat{u}(\xi)=0, \quad \text{ for
    a.a. }\xi \in \R^3,
\end{equation*}
where
\begin{equation*}
  L(\xi)=\mu |\xi|^2 \widehat{u}(\xi) + (\lambda + \mu) \xi \xi^t
  \widehat{u}(\xi)
  =
  \begin{pmatrix}
    \mu|\xi|^2 + (\lambda +\mu)\xi_1^2 & (\lambda + \mu) \xi_1\xi_2 &(\lambda + \mu) \xi_1\xi_3\\
    (\lambda + \mu) \xi_1\xi_2 & \mu|\xi|^2 + (\lambda +\mu)\xi_2^2 & (\lambda + \mu) \xi_2\xi_3\\
    (\lambda + \mu) \xi_1\xi_3 & (\lambda + \mu) \xi_2\xi_3 & \mu|\xi|^2 + (\lambda +\mu)\xi_3^2 
  \end{pmatrix}.
\end{equation*}
For a.e.~$\xi \in \R^3$ we have that 
\begin{equation*}
  P^{-1}(\xi)\,L(\xi) \, P(\xi)  =   D(\xi):=
  \begin{pmatrix}
    \mu |\xi|^2 & 0 & 0 \\
    0 & \mu |\xi|^2 & 0 \\
    0 & 0 & (\lambda +2\mu)|\xi|^2
  \end{pmatrix}
  ,
  \quad \text{ with }
  P(\xi)=
  \begin{pmatrix}
    -\xi_2 & -\xi_3 &\xi_1\\
    \xi_1 & 0 & \xi_2\\
    0 & \xi_1 &\xi_3
  \end{pmatrix}
  .
\end{equation*}
 Determining a solution $u$ of~\eqref{eq:free-evl} is equivalent to find 
 a vector field
 $\widehat{v}=(\widehat{v}_1,\widehat{v}_2,\widehat{v}_3):=P^{-1}\widehat{u}$
 such that $D(\xi)\widehat{v}(\xi)-z\widehat{v}(\xi)=0.$ 
Using the inverse Fourier transform, one is reduced to determine $v=(v_1, v_2, v_3)$ a solution to the Helmholtz-type system
\begin{equation}\label{eq:Helmholtz-type}
	\begin{system}
		-\Delta v_1-\frac{z}{\mu} v_1=0,\\
		-\Delta v_2-\frac{z}{\mu} v_2=0,\\
		-\Delta v_3-\frac{z}{\lambda +2\mu} v_3=0,
	\end{system}
\end{equation}
and a solution $u$ to~\eqref{eq:free-evl} is given by
\begin{equation*}
	u=\mathcal{F}^{-1}P \mathcal{F} v = i
	\begin{pmatrix}
		-\partial_2 v_1 -\partial_3 v_2 + \partial_1 v_3\\
		\partial_1 v_1 + \partial_2 v_3\\
		\partial_1v_2 + \partial_3 v_3
	\end{pmatrix}.
\end{equation*} 
For $k :=(0, z/\mu, 0)$, the function $v (x) := (\mu e^{i k \cdot x} / z, 0, 0)$ is clearly solution to \eqref{eq:Helmholtz-type}.
So, the function $u (x) = (e^{i k \cdot x},0,0)$ is solution to
\eqref{eq:free-evl}, $| u (x)| =1$ for almost all $x\in \R^3$ and all its
derivatives are bounded.
\end{proof}
With this result at hand, we are in position to prove the following theorem guaranteeing that the semi-axis $[0,\infty)$ belongs to the spectrum of $-\Delta^\ast +V.$
\begin{proposition}\label{thm:inclusion}
Let $d\geq 3$ and assume~\eqref{eq:cond-FKV-d3}. Then $[0,\infty) \subset \sigma(-\Delta^\ast + V).$
\begin{proof}
Let $z\in (0,\infty).$ We construct the sequence $\{\phi_n\}_{n\in  \N}$
from Lemma~\ref{lemma:FKV} applied to $H=-\Delta^\ast + V$ and $z$,
making use of Lemma~\ref{lemma:diagonalization}. Let $u$ be as in
Lemma~\ref{lemma:diagonalization}: we set
$\phi_n(x)=\varphi_n(x)u(x),$ where
$\varphi_n(x):=n^{-d/2}\varphi_1(x/n)$ for all $n\geq 1$
, and $\varphi_1\in
C^{\infty}_0(\R^d),$ $\|\varphi_1\|_{L^2(\R^d)}=1.$ Clearly
\begin{equation}\label{eq:limit-infty}
	\|\varphi_n\|_{L^2(\R^d)}=\|\varphi_1\|_{L^2(\R^d)}=1,
	\;
	\|\nabla \varphi_n\|_{L^2(\R^d)}=n^{-1}\|\nabla \varphi_1\|_{L^2(\R^d)},
	\;
	\|\partial_j \partial_k \varphi_n\|_{L^2(\R^d)}=n^{-2}\|\partial_j\partial_k\varphi_1\|_{L^2(\R^d)}, 
\end{equation}  
for any $j,k=1,2,\dots, d.$
Notice that as $u$ is chosen such that $|u(x)|=1,$ then $\|\phi_n\|_{L^2(\R^d)^d}=1$ and clearly $\phi_n\in \mathcal{D}(h)=\mathcal{D}(h_0)=H^1(\R^d)^d$ for all $n\in \N.$
Moreover, using that $u$ satisfies~\eqref{eq:free-evl} and that $u$
and its derivatives are bounded, one has
(we hide the summation over repeated symbols)
\begin{multline}
\label{eq:sing-seq}	
		\|-\Delta^\ast \phi_n-z\phi_n\|_{L^2(\R^d)^d}\\
		\begin{aligned}  
			&=\|-\mu \Delta\varphi_n u -2\mu (\nabla\varphi_n \cdot \nabla) u 
		-(\lambda + \mu)\nabla \varphi_n \div u 
		-(\lambda+\mu)\partial_j \nabla \varphi_n u_j
		-(\lambda + \mu)\partial_j \varphi_n \nabla u_j\|_{L^2(\R^d)^d}\\
		&\leq \mu \|\Delta \varphi_n\|_{L^2(\R^d)^d} \|u\|_{L^\infty(\R^d)^d}
		+2\mu \|\partial_j \varphi_n\|_{L^2(\R^d)^d}\|\partial_j u\|_{L^\infty(\R^d)^d}\\
		&\phantom{=}+(\lambda + \mu) \|\nabla \varphi_n\|_{L^2(\R^d)^d}\|\div u\|_{L^\infty(\R^d)^d}
		-(\lambda + \mu) \|\partial_j \nabla \varphi_n\|_{L^2(\R^d)^d}\|u_j\|_{L^\infty(\R^d)^d}\\
		&\phantom{=}+(\lambda + \mu)\|\partial_j \varphi_n\|_{L^2(\R^d)^d}\|\nabla u_j\|_{L^\infty(\R^d)^d}.
	\end{aligned}
\end{multline}
From~\eqref{eq:limit-infty} it follows that the right hand side of~\eqref{eq:sing-seq} goes to zero as $n$ tends to infinity.

Using the Hardy-type subordination~\eqref{eq:cond-FKV-d3} one has
\begin{equation}\label{eq:v-sing-seq}
		|v[\phi_n]| =\Big|\int_{\R^d} \overline{V\phi_n}\cdot \phi_n \Big|
		\leq \||V|^{1/2}\varphi_n\|_{L^2(\R^d)^d}^2\\
		\leq a \|\nabla \varphi_n\|_{L^2(\R^d)^d}^2,
\end{equation}
again using~\eqref{eq:limit-infty} it follows that the right hand side of~\eqref{eq:v-sing-seq} goes to zero as $n$ tends to infinity.
The numerator in~\eqref{eq:sequence-spectr} can be estimated as follows
\begin{equation*}
	\begin{split}
		|h(\phi_n,\psi) -z(\phi_n,\psi)|
		&=|(-\Delta^\ast \phi_n -z\phi_n, \psi) +v(\phi_n,\psi)|\\
		&\leq \|-\Delta^\ast \phi_n -z\phi_n\|_{L^2(\R^d)^d}\|\psi\|_{L^2(\R^d)^d}
		 +\sqrt{|v[\phi_n]|} \sqrt{|v[\psi]|}\\
		&\leq \|-\Delta^\ast \phi_n -z\phi_n\|_{L^2(\R^d)^d}\|\psi\|_{L^2(\R^d)^d}
		 +\sqrt{|v[\phi_n]|}\sqrt{a} \|\nabla \psi\|_{L^2(\R^d)^d}\\
		 &\leq 2\big(\|-\Delta^\ast \phi_n -z\phi_n\|_{L^2(\R^d)^d} + \sqrt{|v[\phi_n]|}\sqrt{a}\big)\|\psi\|_{\mathcal{D}(h_0)},
	\end{split}
\end{equation*}
where $\|\cdot\|_{\mathcal{D}(h_0)}$ is the usual $H^1(\R^d)^d$ norm.
As for the denominator in~\eqref{eq:sequence-spectr}, using again~\eqref{eq:cond-FKV-d3}, it follows
\begin{equation*}
	\begin{split}
		\|\psi\|_{\mathcal{D}(h)}^2
	&=\mu\|\nabla \psi_\textup{S}\|_{L^2(\R^d)^d}^2 + (\lambda + 2\mu)\|\nabla \psi_\textup{P}\|_{L^2(\R^d)^d}^2 + \Re v[\psi] + \|\psi\|_{L^2(\R^d)^d}^2\\
	&\geq (\min\{\mu, \lambda + 2\mu\}-a)\|\nabla \psi\|_{L^2(\R^d)^d}^2 + \|\psi\|_{L^2(\R^d)^d}^2\\
	&\geq \min\{1,(\min\{\mu, \lambda + 2\mu\}-a)\}\|\psi\|_{\mathcal{D}(h_0)}^2.
	\end{split}
\end{equation*}
Using the previous estimates one has
\begin{equation*}
	\sup_{\substack{\psi\in \mathcal{D}(h)\\ \psi\neq 0}}
	\frac{|h(\phi_n, \psi)-z(\phi_n,\psi)|}{\|\psi\|_{\mathcal{D}(h)}}
	\leq 2\frac{\|-\Delta^\ast \phi_n -z\phi_n\| + \sqrt{|v[\phi_n]|}\sqrt{a}}{\sqrt{\min\{1,(\min\{\mu, \lambda + 2\mu\}-a)\}}}.
\end{equation*}
Since the right hand side tends to zero due to~\eqref{eq:sing-seq}
and~\eqref{eq:v-sing-seq}, the sequence $\phi_n$ satisfies the
hypotheses of Lemma~\ref{lemma:FKV}, thus $(0,\infty)\subset
\sigma(-\Delta^\ast +V).$ Since the spectrum is closed, we get the thesis.
	\end{proof}
\end{proposition}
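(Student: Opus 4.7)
The plan is to exploit the abstract criterion of Lemma~\ref{lemma:FKV} (applied to $H=-\Delta^\ast+V$ with its associated form $h_V$): for every $z\in[0,\infty)$ I would construct a normalized sequence $\{\phi_n\}\subset H^1(\R^d)^d$ such that the numerator in \eqref{eq:sequence-spectr} tends to $0$, while the form norm $\|\psi\|_{\mathcal{D}(h_V)}$ dominates $\|\psi\|_{H^1}$. The latter is the easy half: since $a<\min\{\mu,\lambda+2\mu\}$ in \eqref{eq:cond-FKV-d3}, the accretivity estimate $\Re h_V[\psi]\ge (\min\{\mu,\lambda+2\mu\}-a)\|\nabla\psi\|^2$ immediately yields $\|\psi\|_{\mathcal{D}(h_V)}^2\gtrsim \|\psi\|_{H^1}^2$. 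Hence the whole burden is on the numerator, which (after Cauchy--Schwarz and the Hardy-type subordination on $\psi$) reduces to proving
\[
  \|(-\Delta^\ast-z)\phi_n\|_{L^2(\R^d)^d}\to 0 \qquad\text{and}\qquad v[\phi_n]\to 0.
\]

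For the sequence, I would mimic the standard spreading-bump Weyl sequence used for the Laplacian, but with a vector-valued plane-wave envelope tailored to the Lamé symbol. Writing the symbol $L(\xi)=\mu|\xi|^2 I+(\lambda+\mu)\xi\xi^{\!\top}$, the identity $L(\xi)v=\mu|\xi|^2 v$ holds for every $v\perp\xi$; so, for $z>0$, I would pick $k\in\R^d$ with $|k|^2=z/\mu$ and a unit vector $v\perp k$ (possible because $d\ge 3$), and set $u(x):=e^{ik\cdot x}v$. This $u$ is a classical solution of $(-\Delta^\ast-z)u=0$, $|u(x)|\equiv 1$, and all its derivatives are bounded. (For $z=0$ one can take $u$ any constant unit vector.) Then I would set $\phi_n(x):=\varphi_n(x)u(x)$, with $\varphi_n(x)=n^{-d/2}\varphi_1(x/n)$, $\varphi_1\in C^\infty_c(\R^d)$, $\|\varphi_1\|_{L^2}=1$. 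Since $|u|\equiv 1$, we automatically get $\|\phi_n\|_{L^2(\R^d)^d}=1$, and $\|\partial^\alpha\varphi_n\|_{L^2}=n^{-|\alpha|}\|\partial^\alpha\varphi_1\|_{L^2}$.

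To check the free part, I would expand $-\Delta^\ast\phi_n$ via the Leibniz rule: the terms in which no derivative falls on $\varphi_n$ reproduce $\varphi_n\cdot(-\Delta^\ast u)=z\phi_n$ and cancel against $-z\phi_n$, leaving only terms containing at least one derivative of $\varphi_n$, each of which carries an $L^2$ factor $O(n^{-1})$ or $O(n^{-2})$ against bounded derivatives of $u$; this yields $\|(-\Delta^\ast-z)\phi_n\|_{L^2}\to 0$. For the potential term, the pointwise identity $|u|=1$ gives $|v[\phi_n]|\le\int_{\R^d}|V|\,|\varphi_n|^2\,dx$, and the Hardy subordination \eqref{eq:cond-FKV-d3} applied to the \emph{scalar} $\varphi_n$ bounds this by $a\|\nabla\varphi_n\|^2= an^{-2}\|\nabla\varphi_1\|^2\to 0$. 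With both pieces, Lemma~\ref{lemma:FKV} gives $z\in\sigma(-\Delta^\ast+V)$; closedness of the spectrum finishes the proof.

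The one nontrivial point is Step~1, i.e.\ producing a bounded (as opposed to $L^2$) classical solution of the vector-valued free equation. For the Schrödinger operator one simply writes down $e^{ik\cdot x}$, but here the matrix symbol forces a choice of polarization. Selecting a transversal polarization $v\perp k$ diagonalizes the system (and gives the $\mu|k|^2=z$ eigenvalue); the longitudinal polarization $v\parallel k$ would give the alternative branch $(\lambda+2\mu)|k|^2=z$. Either works, and in dimensions $d\ge 3$ the perpendicular subspace is at least two-dimensional, so no degeneracy issue arises. After that, the proof is a standard spreading-bump computation plus the Hardy subordination.
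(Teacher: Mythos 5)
Your proposal is correct and follows essentially the same route as the paper: the abstract criterion of Lemma~\ref{lemma:FKV}, a bounded plane-wave solution of the free equation multiplied by the spreading bump $\varphi_n(x)=n^{-d/2}\varphi_1(x/n)$, the Leibniz-rule cancellation for the free part, and the Hardy subordination for $v[\phi_n]$ and for the denominator. The only (cosmetic) difference is that you obtain the transversal plane wave $u=e^{ik\cdot x}v$, $v\perp k$, $|k|^2=z/\mu$, directly from the observation $L(\xi)v=\mu|\xi|^2v$ for $v\perp\xi$, whereas the paper reaches the very same $u$ via the diagonalization argument of Lemma~\ref{lemma:diagonalization}.
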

\subsection{Absence of residual spectrum}
In order to conclude the claimed stability, it is left to show that the residual spectrum of $-\Delta^\ast + V$ is empty. This is the object of the next theorem.
\begin{proposition}\label{lemma:residual}
	Let $d\geq3.$ Then $\sigma_\textup{r}(-\Delta^\ast + V)=\varnothing.$
	\begin{proof}
		Let define $H_V:=-\Delta^\ast + V.$ It is easy to see that $H_V^\ast=H_{\overline{V}^{\,t}},$ where $\overline V^{\,t}$ denotes the conjugate transpose of the matrix $V.$ 
		
Let denote with $J$ the complex-conjugation
transposition operator defined by
$J(Au)=\overline{A}^{\,t} \overline{u},$ for any
square matrix $A\in \C^{d\times d}$ and any vector
$u\in \C^d$: notice that $J(u)=J(I_{\C^d}
u)=\overline{u},$ with $I_{\C^d}$ the $d\times d$
identity matrix, in other words, given any vector
$u=I_{\C^d} u$ then $J$ acts as the usual
complex-conjugation operator. $J$ as defined above is
a conjugation operator in the sense
of~\cite[Sec. III.5]{Ed_Ev}. One easily checks that $H_V^\ast=JH_VJ,$
\emph{i.e.}, $H_V$ is $J$-self-adjoint, and thus it has no residual spectrum (\emph{cfr.}~\cite{BK08}) .
	\end{proof}
\end{proposition}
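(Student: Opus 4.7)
The plan is to exhibit a conjugation operator $J$ on $L^2(\R^d)^d$ with respect to which $H_V := -\Delta^\ast + V$ is $J$-self-adjoint, and then invoke the standard principle that $J$-self-adjoint operators have empty residual spectrum. This is the one step of the proof of \Cref{thm:dim3} that is purely algebraic and requires no smallness or regularity on $V$.

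First I would identify the adjoint $H_V^\ast$. Since the Lam\'e operator $-\Delta^\ast$ is self-adjoint on $H^1(\R^d)^d$ (as recalled in the introduction) and the Hilbert-space adjoint of the multiplication by a matrix-valued symbol $V(x)$ is multiplication by the pointwise conjugate transpose $\overline{V}^{\,t}$, it follows that $H_V^\ast = -\Delta^\ast + \overline{V}^{\,t} = H_{\overline{V}^{\,t}}$. In particular, the adjoint is of the same Lam\'e-plus-matrix-potential form, so the spectral-theoretic framework of the previous sections applies to it as well.

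Next I would introduce a conjugation $J$ on $L^2(\R^d)^d$ tailored to produce $J H_V J = H_V^\ast$. The natural candidate is the complex-conjugation-transposition operator acting on vectors as ordinary complex conjugation and accommodating the matrix-valued multiplication through the rule $J(A u) = \overline{A}^{\,t} \overline{u}$; this makes the real-coefficient differential operator $-\Delta^\ast$ commute with $J$ and produces $J V J = \overline{V}^{\,t} = V^\ast$ as multiplication operators. One then checks that $J$ is antilinear, isometric and involutive, so it qualifies as a conjugation in the sense of \cite{Ed_Ev}.

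Once the identity $H_V^\ast = J H_V J$ has been verified, the conclusion follows from a well-known principle (see e.g.~\cite{BK08}): a $J$-self-adjoint operator has no residual spectrum, because if $z \in \sigma_\textup{r}(H_V)$, then $\overline{z}$ is an eigenvalue of $H_V^\ast = J H_V J$, and applying $J$ to a corresponding eigenvector produces an eigenvector of $H_V$ at $z$, forcing $z \in \sigma_\textup{p}(H_V)$ and contradicting the disjointness of the point and residual spectra. The main subtlety, and what makes it necessary to build the matrix transposition into the definition of $J$, is the verification of $J V J = V^\ast$ at the matrix level: this is automatic in the scalar situation treated in \cite{FKV18}, but in the present vector-valued setting one must match $\overline{V}$ with the pointwise conjugate transpose $\overline{V}^{\,t}$, which is precisely what the convention $J(Au) = \overline{A}^{\,t}\overline{u}$ achieves.
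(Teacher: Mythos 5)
Your proposal is correct and follows essentially the same route as the paper: identify $H_V^\ast=H_{\overline{V}^{\,t}}$, introduce the conjugation $J(Au)=\overline{A}^{\,t}\overline{u}$ combining complex conjugation with matrix transposition, verify $H_V^\ast=JH_VJ$, and conclude via the standard fact that $J$-self-adjoint operators have empty residual spectrum. The only difference is that you additionally spell out the elementary argument behind that last fact, which the paper delegates to~\cite{BK08}.
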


\subsection{Proofs of \Cref{thm:dim3}, \Cref{thm:M-Cd3} and \Cref{thm:L^p-d3}}
\label{sub:dim=3}

\begin{proof}[Proof of \Cref{thm:dim3}]
The proof of \Cref{thm:dim3} follows from
\Cref{thm:absence-evs-d3},
\Cref{thm:absence-continuous}, \Cref{thm:inclusion} and
\Cref{lemma:residual}.
\end{proof}

Now we turn to the proof of \Cref{thm:M-Cd3}. We stress that the validity of Propositions~\ref{thm:absence-evs-d3}--\ref{lemma:residual} (from which the stability of the spectrum of $-\Delta^\ast + V$ follows) requires only two ingredients: first, one needs, $\mathcal{D}(v)\subset \mathcal{D}(h_0)$ (\emph{cfr}.~\eqref{eq:h_0} and~\eqref{eq:v}) and secondly $\|K_z\|\leq\mathfrak{a}<1.$ As soon as we consider class of potentials such that these two requests are satisfied, then one gets spectral stability of the perturbed Lamé operators with such perturbations as a consequence of Propositions~\ref{thm:absence-evs-d3}--\ref{lemma:residual}. This allows us to prove \Cref{thm:M-Cd3} and \Cref{thm:L^p-d3}.

\begin{proof}[Proof of \Cref{thm:M-Cd3}]
Thanks to the Hardy-type inequality~\eqref{eq:Hardy} with~\eqref{eq:d=3-a}, if $V\in
\mathcal{L}^{2,p}(\R^3),$ $1<p\leq 3/2$ then $\mathcal{D}(v)\subset
\mathcal{D}(h_0).$ Moreover, from~\eqref{BS-MC-d=3} and hypothesis~\eqref{eq:cond-d3-M-C} one has $\|K_z\|\leq\mathfrak{a}<1.$  In light of the remark above, this concludes the proof.
\end{proof} 

\begin{proof}[Proof of \Cref{thm:L^p-d3}]
Thanks to~\eqref{eq:Sob-Hol}, if $V\in L^{3/2}(\R^3)$ then $\mathcal{D}(v)\subset \mathcal{D}(h_0).$ Moreover, from~\eqref{BS-Lp-d=3} and assumption~\eqref{eq:cond-d3-Lp} one has 
$\|K_z\|\leq \mathfrak{a}<1.$ This concludes the proof. 
\end{proof}

\providecommand{\bysame}{\leavevmode\hbox to3em{\hrulefill}\thinspace}


\end{document}